\numberwithin{equation}{section}
\numberwithin{equation}{section}
\newcommand{\toi}{\to\infty}
\newcommand{\eind}{\stackrel{d}{=}}
\newcommand{\cleq}{\leq}
\newcommand{\dto}{%
	\mathrel{\vbox{\offinterlineskip\ialign{%
				\hfil##\hfil\cr
				$\scriptstyle d$\cr
				$\longrightarrow$\cr
			}}}}
\newcommand{\wto}{%
	\mathrel{\vbox{\offinterlineskip\ialign{%
				\hfil##\hfil\cr
				$\scriptstyle w$\cr
				$\longrightarrow$\cr
			}}}}
\newcommand{\vto}{%
	\mathrel{\vbox{\offinterlineskip\ialign{%
				\hfil##\hfil\cr
				$\scriptstyle v$\cr
				$\longrightarrow$\cr
			}}}}
\newcommand{\lless}{\prec} 
\newcommand{\lgreater}{\succ}
\newcommand{\lleq}{\preceq} 
\newcommand{\lgeq}{\succeq}
\newcommand{\pr}{\mathbb{P}} 
\newcommand{\ex}{\mathbb{E}}
\newcommand{\EE}{\mathbb{E}}
\newcommand{\ZZ}{\mathbb{Z}}
\newcommand{\PP}{\mathbb{P}}
\newcommand\ind[1]{\mathbbm{1}{\left\{#1\right\}}}
\newcommand\1[1]{\mathbbm{1}_{#1}}
\newcommand{\N}{\mathbb{N}}
\newcommand{\R}{\mathbb{R}} 
\newcommand{\Z}{\mathbb{Z}}
\newcommand{\bQ}{\boldsymbol{Q}}
\newcommand{\bX}{\boldsymbol{X}}
\newcommand{\bY}{\boldsymbol{Y}}
\newcommand{\bZ}{\boldsymbol{Z}}
\newcommand{\bTheta}{\boldsymbol{\Theta}}
\newcommand{\bx}{\boldsymbol{x}}
\newcommand{\by}{\boldsymbol{y}}
\newcommand{\tbx}{\tilde{\boldsymbol{x}}}
\newcommand{\bt}{\boldsymbol{t}}
\newcommand{\bT}{\boldsymbol{T}}
\newcommand{\bi}{\boldsymbol{i}}
\newcommand{\bj}{\boldsymbol{j}}
\newcommand{\bk}{\boldsymbol{k}}
\newcommand{\bo}{\boldsymbol{0}}
\newcommand{\bone}{\boldsymbol{1}}
\newcommand{\ba}{\boldsymbol{a}}
\newcommand{\bb}{\boldsymbol{b}}
\newcommand{\lo}{\tilde{l}_0}
\newcommand{\loo}{\tilde{l}_{0,0}}
\newcommand{\bz}{{\varepsilon}}
\newcommand{\s}{s} 
\newcommand{\ts}{\alpha^*} 
\newcommand{\mus}{\mu^*} 
\newcommand{\tailcons}{C}
\newcommand{\bbou}{\mathcal{B}_b}
\newcommand{\F}{\mathcal{F}}
\newcommand{\spaceX}{\mathbb{X}} 
\newcommand{\x}{\spaceX}
\newcommand{\borel}{\mathcal{B}}
\newcommand{\bborel}{\mathcal{B}_b}
\newcommand{\bounded}{\bborel}
\newcommand{\mx}{\mathcal{M}(\spaceX)}
\newcommand{\mpx}{\mathcal{M}_p(\mathbb{X})}
\newcommand{\cbb}{CB_b(\spaceX)}
\newcommand{\cbbp}{CB_b^+(\spaceX)}
\newcommand{\cl}[1]{\overline{#1}}
\newcommand{\sub}{\subseteq}
\newcommand{\PPP}{\mathrm{PPP}}
\newcommand{\spectral}{K}
\newcommand{\dtilde}{\tilde{d}}
\newcommandx\sequence[3][2=\ZZ,3=t]{(#1_{#3})_{#3\in#2}}
\theoremstyle{plain}
\newtheorem{theorem}{Theorem}[section]
\newtheorem{lemma}[theorem]{Lemma}
\newtheorem{corollary}[theorem]{Corollary}
\newtheorem{proposition}[theorem]{Proposition}
\newtheorem{hypothesis}[theorem]{Assumption}
\theoremstyle{remark}
\newtheorem{definition}{Definition}[section]
\newtheorem{remark}{Remark}[section]
\newtheorem{example}{Example}[section]
\begin{document}

\begin{frontmatter}
\title{Compound Poisson approximation for regularly varying fields with application to sequence alignment}
\runtitle{Compound Poisson approximation for regularly varying fields}

\begin{aug}
\author{\fnms{Bojan} \snm{Basrak}\ead[label=e1,mark]{bbasrak@math.hr}}
\and
\author{\fnms{Hrvoje} \snm{Planini\'c}\ead[label=e2,mark]{planinic@math.hr}}
 \address{Department of Mathematics, Faculty of Science, University of Zagreb\\
 Bijeni\v{c}ka cesta 30, 10000 Zagreb, Croatia\\
  \printead{e1,e2}}
\end{aug}

\begin{abstract}
The article determines the asymptotic shape of the extremal clusters in stationary regularly varying random fields. To deduce this result, we present a general framework for the Poisson approximation of point processes on Polish spaces which appears to be of independent interest. 
		We further introduce a novel and convenient concept of anchoring of the extremal clusters  for regularly varying sequences and fields. Together with the Poissonian approximation theory, this allows for a  concise description of the limiting behavior of random fields in this setting.
		We  apply this theory  to shed entirely new light on the classical problem of evaluating local alignments of biological sequences.	
\end{abstract}

\begin{keyword}
\kwd{compound Poisson approximation}
\kwd{random fields}
\kwd{regular variation}
\kwd{tail process}
\kwd{point process}
\kwd{local sequence alignment}
\kwd{Gumbel distribution} \\
\textit{MSC 2010:} Primary 60G70; Secondary 60F99, 60G55, 60G60, 92D20
\end{keyword}

\end{frontmatter}

\section{Introduction}
{Developments in the theory of stationary regularly varying sequences have broadened our understanding of several key time series models,
	see for instance \cite{basrak:segers:2009,janssen:segers:2014,mikosch:wintenberger:2016} and references therein. 
	This theory extends to regularly varying random fields in a relatively straightforward manner, the main technical difficulty being the absence of a natural ordering on the higher-dimensional integer lattice. In parallel to the one-dimensional case, the extreme values in such a random field typically exhibit local clustering.
Characterizing the limiting behavior of those extreme clusters is one of the main goals of our study. }

{In order to deal with this question, we first present a new theory of Poisson approximation for point processes on general Polish spaces which seems of independent interest. Next, we introduce a novel concept of anchoring. 
	This notion is original, and we think, illuminating and bound to be useful even in the well understood time series setting. 
	Using it, we deduce several
	results concerning  compound Poisson limit approximations for extremes of stationary regularly varying random fields.
	}

{Finally,  these methods allow us to revisit the classical problem of local sequence alignments. In particular, we give a new geometric interpretation for the asymptotic behavior of the scores in local alignments of i.i.d.\ sequences. Our main result in this context is given as Theorem~\ref{thm:PPconv_Alignments_intro} below.
}

\subsection{Regularly varying random fields}
We say that a  real-valued random field $\bY=(Y_{i,j}:i,j\in\Z)$ represents
the tail field (or the tail process) of a (strictly) stationary real-valued random field $(X_{i,j}: i,j\in\Z)$ if it appears as the limit in
\begin{equation*}	
\left( u^{-1}X _{i,j} \right)_{i,j \in \{-m,\dots,m\}} \;\big| \; |X_{0,0}|>u  \dto (Y_{i,j})_{i,j \in \{-m,\dots,m\}}\, ,
\end{equation*} 
 for every $m\in \N$  as $u \toi$. {Note that in this introduction we consider random fields indexed over the two-dimensional integer lattice, while we actually develop the theory for integer lattices of arbitrary dimension $d\in\N$.}

{The notion of the tail process for stationary time series was introduced in \cite{basrak:segers:2009}.
	In \Cref{sub:tail} we extend this theory to random fields. This extension is relatively straightforward but some issues arise due to the absence of a natural ordering on $\Z^2$, see \Cref{sub:exist_tail}.  As in the one-dimensional case, the existence of the tail process is equivalent to $(X_{i,j})$ being regularly varying, that is, to having all of its finite-dimensional distributions multivariate regularly varying. 
	}


One of our main goals is to describe the limiting extremal behavior of $(X_{i,j})_ {i,j \in \{1,\dots, n\}}$ as $n\toi$ relying on the theory of point processes; cf. \Cref{sec:basic_setup} where we recall the definition of a point process on a general state space and the related notion of vague convergence. 
{
The limiting extremal behavior can be deduced easily if $X_{i,j}$'s are i.i.d., see Resnick~\cite{resnick:1987}. On the other hand, in the general case where extreme values tend to appear in clusters,  it is often useful to decompose $(X_{i,j})_ {i,j \in \{1,\dots, n\}}$ into (smaller) blocks of size $r_n^2$ for some intermediate sequence $(r_n)_n$ such that $\lim_{n\toi} r_n =\infty$ but with $\lim_{n\toi}r_n/n =0$. } 
More precisely, define the blocks as  $r_n^2$-dimensional random vectors
\begin{equation}\label{eq:blocks_intro}
\bX_{n,\bi}:=(X_{i,j}: (i,j) \in J_{n,\bi}) \, ,
\end{equation}
for $\bi=(i_1,i_2) \in I_n :=\{1,\dots,k_n\}^2$ where $k_n=\lfloor n / r_{n} \rfloor$  and
\begin{equation}\label{eq:blocks_indices_intro}
J_{n, \bi}=\{(i_1-1)r_n+1,\dots, i_1 r_n\}\times\{(i_2-1)r_n+1,\dots, i_2 r_n\} \, . 
\end{equation}
 
{One can add zeros around these blocks and consider them as elements of the (infinite-dimensional) space of all arrays $(x_{i,j})_{i,j\in \Z} \in \R^{\Z^2}$ which vanish to 0 in all directions  but where we,  for technical reasons explained in \Cref{rem:why_lo}, do not distinguish between arrays which are equal up to a shift. This space is denoted by $\lo$ and can be seen as a quotient space, see \Cref{subsub:lo} for a precise definition where we also endow
 $\lo$ with the metric generated by the norm $\|(x_{i,j})_{i,j}\|=\max_{i,j} |x_{i,j}|$.

 {In \Cref{thm:PPconv} we show that under some standard weak dependence conditions on the field $(X_{i,j})$ and for a sequence of positive numbers $(a_n)_n$ satisfying $
\lim_{n\toi} n^2\pr(|X_{0,0}|>a_n)= 1$, 
\begin{align}\label{eq:PPconvinLo_intro}
\sum_{\bi\in I_n} \delta_{\left(\bi /k_n,\bX_{n,\bi}/a_n\right)} \dto\sum_{k\in \N}\delta_{\left(\bT_k, P_k (Q^k_{i,j})_{i,j\in \Z}\right)} \, , \, n\toi \, ,
\end{align}  
in the space of point measures on $[0,1]^2 \times (\lo\setminus\{\bo\})$ where {$\bo$ is the array consisting only of 0's,} and 
\begin{enumerate}[(i)]
	\item $\sum_{k\in \N}\delta_{(\bT_k,P_k)}$ is a Poisson point process on $[0,1]^2\times(0,\infty)$
	with intensity measure $\vartheta d\bt \times \alpha y^{-\alpha-1}dy$ for some constant $\vartheta>0$; 
	\item $(Q^k_{i,j})_{i,j\in \Z}, \: k\in \N$ is a sequence of i.i.d.\ random fields independent of
	$\sum_{k\in \N}\delta_{(\bT_k, P_k)}$. 
\end{enumerate}	

As usual, the vague topology used in (\ref{eq:PPconvinLo_intro}) controls only the blocks $\bX_{n,\bi}$ whose maximal value $\|\bX_{n,\bi}\|$ exceeds a  sufficiently high threshold, see \Cref{sec:basic_setup} and \Cref{sub:convToComp} for the technical details. 
	For a schematic represention of the limit in (\ref{eq:PPconvinLo_intro}) on a particular class of regularly varying fields see the right side of \Cref{fig:SeqAli1} and  the discussion after \Cref{thm:PPconv_Alignments_intro}. Note that the spatial location of the block $\bX_{n,\bi}$ in (\ref{eq:PPconvinLo_intro}) satisfies $\bi / k_n \approx \bi r_n /n$ for large $n$ with $\bi r_n$ being the upper-right end index in $J_{n,\bi}$ from (\ref{eq:blocks_indices_intro}).



{In the time series setting, the limit in (\ref{eq:PPconvinLo_intro}) appeared already in \cite[Theorem 3.6]{basrak:planinic:soulier:2018}. The novelty of our paper in this context is twofold. First, the link between the tail process $\bY$ and the key ingredients of the limit in (\ref{eq:PPconvinLo_intro}), constant $\vartheta$ 
and the distribution of $(Q^k_{i,j})_{i,j\in \Z}$, is described in detail  using the {novel} notion of anchoring, see \Cref{subs:anchor}. {We think that this notion sheds new light even on known results in the time series setting.}  
Second, we show that the convergence in (\ref{eq:PPconvinLo_intro}) can be seen in the light of the classical Poisson convergence principle going back to Grigelionis. For that purpose, in \Cref{sec:ComPoApp} we present  a general Poissonian approximation theorem for point processes on Polish spaces constructed from points which satisfy a suitable asymptotic (in)dependence condition. 
Moreover, we give sufficient conditions for this theorem to hold in the spirit of~\cite{arratia:goldstein:gordon:1989}.
These results seem to be of independent interest and  related to those obtained by Schuhmacher~\cite{schuhmacher:2005} using  the Chen-Stein method. We,  however, rely on the Laplace functionals of point processes.}

{Finally,  
	the continuous mapping theorem and (\ref{eq:PPconvinLo_intro}) jointly  yield
\begin{align}
\label{eq:ppconvinR_intro}
\sum_{i,j=1}^n \delta_{((i,j) / n , X_{i,j}/a_n)} \dto \sum_{k\in \N} \sum_{i,j\in \Z}\delta_{\left(\bT_k, P_k Q^k_{i,j} \right) } \, , \, n\toi \, , 
\end{align}
 in the simpler (and more familiar) space of point measures on $[0,1]^2 \times (\R \setminus\{0\})$, see \Cref{cor:PPconv_back_to_R}. Observe that the limit in \eqref{eq:ppconvinR_intro} has a form of  a Poisson cluster (or a compound Poisson) process.}

\subsection{Local sequence alignment}\label{sec:locSeqAlig_intro}

{Because of its importance  in molecular biology, 
 the local alignment problem  was studied extensively both  from a probabilistic and applied perspective,} see for instance \cite{arratia:goldstein:gordon:1989,dembo:karlin:zeitouni:1994,hansen:2006} and references therein. Since it represents one of the main motivations for our study,  we  explain here its key ingredients and our main result in that context.

Let $(A_i)_{i\in\N}$ and $(B_i)_{i\in\N}$ be two independent i.i.d.\ sequences taking values in a finite alphabet $E$. Also, let $A$ and $B$ be independent random variables distributed as $A_1$ and $B_1$, respectively. 
For a fixed score function $\s:E\times E\to \R$ and for all $i,j\in \N$ and $m=0,1, \dots ,i\wedge j$ (where $i\wedge j:=\min\{i,j\}$), let 
\[
S_{i,j}^m=\sum_{k=0}^{m-1} \s(A_{i-k},B_{j-k}) 
\, 
\]
 be the score of aligning segments $A_{i-m+1},\dots, A_{i}$ and $B_{j-m+1},\dots,B_j$.
Further, for all $i,j\in \N$ define
\begin{align}\label{eq:original_scores}
S_{i,j}=\max\{ S_{i,j}^m : 0\leq m \leq i\wedge j\} \, . 
\end{align}
From a biological perspective it is essential to understand the extremal distributional properties of the random matrix $(S_{i,j}: 1\leq i,j \leq n)$ as $n\toi$. The following simple assumption is standard in this context, cf. Dembo et al.~\cite{dembo:karlin:zeitouni:1994}. 
\begin{hypothesis}\label{hypo:negative_drift}
The distribution of $\s(A,B)$ is nonlattice, i.e.\ $\pr(s(A,B)\in \delta \Z)<1$ for all $\delta>0$,  and satisfying
\begin{equation}\label{eq:negative_drift}
\ex[\s(A,B)]<0 \quad \text{ and } \quad \pr(\s(A,B)>0)>0 \, .
\end{equation}
\end{hypothesis}

The lattice case is excluded for simplicity in the sequel. It is known to be conceptually similar, although technically more involved. Note further that, like~\cite{dembo:karlin:zeitouni:1994} and~\cite{hansen:2006}, we consider only gapless local alignments.


Denote by $\mu_A$ and $\mu_B$ the distributions of $A$ and $B$, respectively and assume
 for simplicity that $\mu_A(e),\,\mu_B(e) >0$ for each letter $e$ in the alphabet $E$. By
Assumption~\ref{hypo:negative_drift} there exists a
 unique strictly positive solution $\ts$  of the Lundberg equation
\begin{equation*}
m(\ts):=\ex[e^{\ts s (A,B)}]=1 \,.
\end{equation*}

Let $\mus$ be the (exponentially tilted) probability measure on $E\times E$ given by 
\begin{align}\label{eq:tilted_measure}
\mus(a,b)=e^{\ts s(a,b)} \mu_A(a)\mu_B(b)\;, \; a,b\in E \, .
\end{align}
 
For two probability measures $\mu$ and $\nu$ on a finite set $F$, denote by $H(\nu|\mu)$ the relative entropy of $\nu$ with respect to $\mu$, i.e.
\[
H(\nu|\mu)=\sum_{x\in F} \nu(x) \log \frac{\nu(x)}{\mu(x)} \, .
\]

Dembo et al.~\cite{dembo:karlin:zeitouni:1994} introduce one final condition on the tilted probability measure $\mus$.
\begin{hypothesis}[Condition (E') in \cite{dembo:karlin:zeitouni:1994}]\label{hypo:E'}
It holds that
\begin{equation}\label{eq:E'}
H(\mus | \mu_A\times \mu_B)> 2\left\{H(\mus_A | \mu_A) \vee H(\mus_B | \mu_B) \right\} ,
\end{equation}
where $\mus_A$ and $\mus_B$ denote the marginals of $\mus$.
\end{hypothesis}

Note that (\ref{eq:E'}) holds automatically if $\mu_A=\mu_B$ and if the score function $s$ is symmetric (i.e.\ $s(a,b)=s(b,a)$) but not of the form $s(a,b)=s(a)+s(b)$,
 see \cite[Section 3]{dembo:karlin:zeitouni:1994a}.

Under Assumptions \ref{hypo:negative_drift} and \ref{hypo:E'}, Dembo et al.~\cite{dembo:karlin:zeitouni:1994} (see also Hansen~\cite{hansen:2006}) showed that the distribution of the maximal local alignment score
$
M_n=\max_{1\leq i,j \leq n} S_{i,j} \, ,
$
asymptotically follows a Gumbel distribution. More precisely, as $n\toi$, for a certain constant $K^*>0$,
\begin{equation} \label{eq:Gumb}
\pr\left(M_n - \frac{2\log(n)}{\ts}\leq x\right)\to e^{-K^* e^{-\ts x}} \, , \, x\in \R \, .
\end{equation}

{Observe that the field $(S_{i,j})$ consists of dependent random variables. For instance, simple arguments can be given (cf.\ \eqref{eq:Lind} below) showing that any extreme score, i.e.\ score exceeding a given large threshold, will be followed by a run of extreme scores along the diagonal. This phenomenon is illustrated in Figure~\ref{fig:SeqAli1}.
 The approach of \cite{dembo:karlin:zeitouni:1994} is based on showing that the number of such extreme clusters, as both the sample size and the threshold tend to infinity, becomes asymptotically Poisson distributed. }

\begin{figure}[h]
\centering
		\includegraphics[scale=0.5]{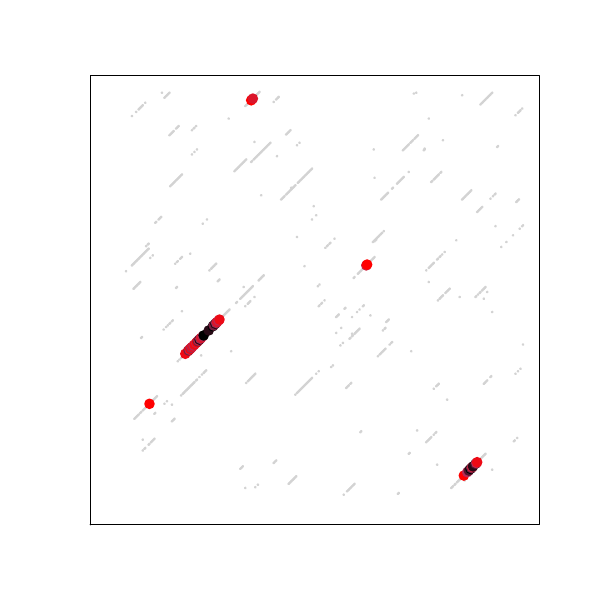}
				\includegraphics[scale=0.5]{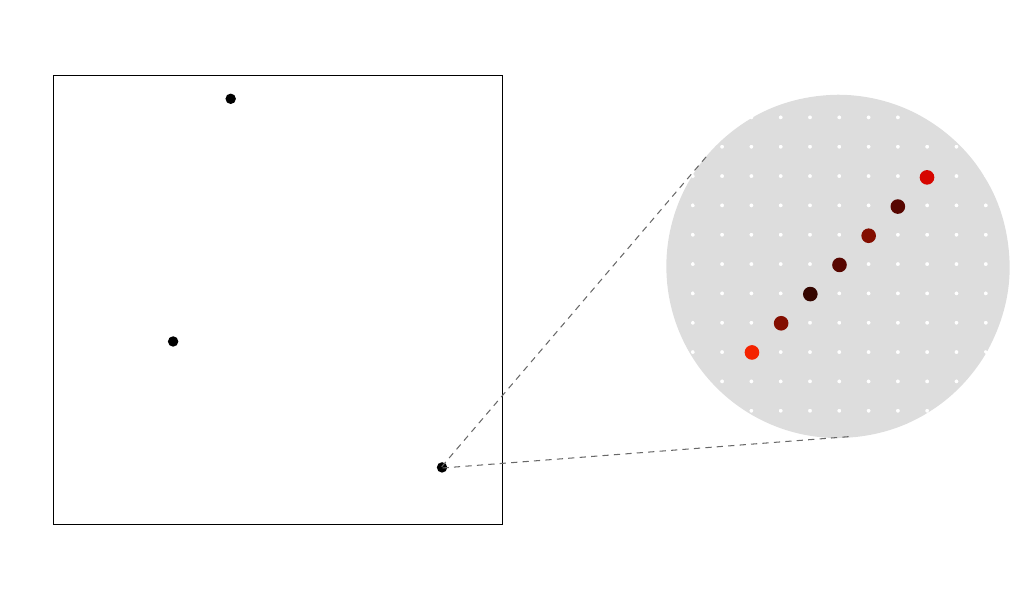}
	\caption{Heatmap of the local scores $S_{i,j}$, $i,j=1,\dots, n$, exceeding a prespecified threshold for two simulated sequences of length $n=500$ (on the left).
			Schematic representation of the limit as $n \toi$ with the clusters of values above the given threshold collapsing to a single point  marked with the corresponding tail field (on the right), see the discussion after \Cref{thm:PPconv_Alignments_intro}. }
	\label{fig:SeqAli1}
\end{figure}

In the sequel,  we show that one can give a much more detailed information about the structure within the extreme clusters. 
In particular, following the method below one can deduce the asymptotic distribution of arbitrary functionals of the upper order statistics of the field $(S_{i,j})$.

Observe first that for each $i,j\in \N$, $S_{i,j}$ can be seen as the maximum of a truncated random walk $(S_{i,j}^m)_{m=0,\ldots , i \wedge j}$ which by
 (\ref{eq:negative_drift}) has negative drift.
 It can be rigorously shown, see Remark \ref{rem:originalScores}, that in all our asymptotic considerations this truncation and the related edge effects can be ignored. Therefore we 
 assume throughout that the sequences $(A_i)$ and $(B_i)$ extend over all integers $i \in \ZZ$. This makes scores $S_{i,j}^m$  well defined for all $i,j\in\Z$ and $m\geq 0$, and consequently we update the original field of scores $(S_{i,j})$ as  follows
%
%
\begin{align}\label{eq:stationary_scores}
S_{i,j}=\sup\{ S_{i,j}^m : m\geq 0 \} \, , \, i,j \in \Z \, .  
\end{align}

By construction, the field $(S_{i,j})$ is stationary. Moreover, by the classical Cram\'er-Lundberg theory, Assumption \ref{hypo:negative_drift} implies that the tail of $S_{i,j}$ is asymptotically exponential, or more precisely
\begin{equation}\label{eq:exp_tail}
\pr(S_{i,j}>u)\sim \tailcons e^{-\ts u} \, , \, \text{as } \, u\toi\, ,
\end{equation}
for some  $\tailcons>0$.
Note that, in the language of extreme value theory, marginal distribution of the field $(S_{i,j})$ belongs to the maximum domain of attraction of the Gumbel distribution. In this light, the limiting result  \eqref{eq:Gumb} may  not be very surprising, but its proof  remains quite involved  due to the clustering of extremal scores of the field $(S_{i,j})$. Observe that the field $(S_{i,j})_{i,j\in \Z}$ satisfies the following simple (Lindley) recursion along any diagonal, namely
\begin{equation} \label{eq:Lind}
  S_{i,j} = \left( S_{i-1, j-1} + \bz _{i,j} \right)_+\,,
\end{equation}
where random variables $\bz_{i,j} = s(A_i,B_j)$ have negative mean.

{Our main result in this context strengthens \eqref{eq:Gumb} to a convergence in distribution of point processes based on the $S_{i,j}$'s. 
The key observation is that under Assumptions \ref{hypo:negative_drift} and \ref{hypo:E'} the transformed field 
\begin{align*}
X_{i,j}=e^{S_{i,j}} \, , \, i,j\in\Z
\end{align*} 
admits a tail process $(Y_{i,j} : i,j\in \Z)$, hence it is regularly varying; see \Cref{prop:multivariate_reg_var}. Its tail process satisfies 
$$Y_{i,j}=0 \,  , \; i\not = j \, .$$
Moreover, the distribution of $Y_{m,m}$'s can be described in detail using two auxiliary independent i.i.d.\ sequences
$ (\bz_i)_{i \geq 1}$ and $ (\bz^*_i)_{i \geq 1}$ whose distributions correspond to
the distributions of $s(A,B)$ under the product measure $\mu_A \times \mu_B$ and 
under the tilted measure $\mus$ from (\ref{eq:tilted_measure}), respectively:
if $S^\bz _ 0 = 0$ and
\begin{align*}
S^\bz_m=\begin{cases}
\phantom{-} \sum_{i=1}^m \bz_i \,, &  m \geq 1 \, , \\
 - \sum_{i=1}^{-m}  \bz^*_i \,,  &  m \leq -1 \, ,
\end{cases}
\end{align*}
then 
$$Y_{m,m} = Y_{0,0} e^{S^\bz_m} \, , \; m \in \ZZ \, , $$  
where $Y_{0,0}$ is Pareto distributed with index $\ts$, i.e.\ $\pr(Y_{0,0}>y)=y^{-\ts}$ for all $y\geq 1$, and independent of $(S^\bz_m)_m$. }
{
To state our main result denote by $\Theta_{i,j}=Y_{i,j}/Y_{0,0}$, $i,j\in \Z$, the so-called spectral tail field of $(X_{i,j})$, so that 
\begin{equation}\label{eq:align_spectralPr_intro}
\Theta_{m,m}=e^{S^\bz_m} \mbox{ for } m \in \ZZ\,, \quad\mbox{ and } \quad  \Theta_{i,j}=0 \mbox{ for } i\neq j\,.
\end{equation}
Take an arbitrary sequence of positive integers $(r_n)$ such that $\lim_{n\toi} r_n = \infty$ and $\lim_{n\toi}r_n/n^\epsilon\to 0$ for all $\epsilon>0$ and recall the blocks $\bX_{n,\bi}$ defined in (\ref{eq:blocks_intro}).}
{\begin{theorem}
\label{thm:PPconv_Alignments_intro}
Under Assumptions \ref{hypo:negative_drift} and \ref{hypo:E'}, 
\begin{align}\label{eq:PPconv_lo_alignm_intro}
\sum_{\bi\in I_n} \delta_{\left(\bi /k_n, \, \bX_{n,\bi}/n^{2/\ts}\right)} \dto \sum_{k\in \N}\delta_{\left(\bT_k, \, P_k(Q^k_{i,j})_{i,j \in \Z} \right)} 
\end{align}  
in the space of point measures on $[0,1]^2 \times (\lo\setminus\{\bo\})$ where 
\begin{enumerate}[(i)]
  \item $\sum_{k\in \N}\delta_{(\bT_k,{P}_k)}$ is a Poisson point process on $[0,1]^2\times (0,\infty)$
   with intensity measure $\vartheta \tailcons d\bt \times \ts y^{-\ts-1} dy$ {where $C$ is the constant from (\ref{eq:exp_tail})} and 
   \begin{align*}
   \vartheta=\PP ( \sup_{m \geq 1}S^\bz_m + \Gamma  \leq  0) \, ,
   \end{align*}
   for an exponential random variable $\Gamma$ with parameter $\ts$ independent of $(S_m^{\bz})$;
  \item $({Q}^{k}_{i,j})_{i,j\in \Z}, \: k\in \N$ are i.i.d.\ random fields independent of
    $\sum_{k\in \N}\delta_{(\bT_k,{P}_k)}$ and with common distribution equal to the distribution of $(\Theta_{i,j})_{i,j\in \Z}$ 
    in \eqref{eq:align_spectralPr_intro}, but conditionally on the underlying random walk $(S^\bz_m)_m$ being negative for $m<0$ and nonpositive for $m>0$.
  \end{enumerate}
\end{theorem}}

 An interpretation of the theorem can be given through Figure~\ref{fig:SeqAli1}. On the left, we plot the  scores exceeding a prespecified threshold for two simulated independent sequences of length $n=500$ from the uniform distribution on a four letter alphabet. The grey dots correspond to the scores exceeding 50\% of the maximal score $M_n$, while the other dots represent points over 75\%$M_n$
 (they are colored from red to black, with the darker color indicating a higher score). {In this simulation, for illustration purposes, we  score a  match by $\sqrt 3$ and a mismatch by $-1$.} 
The picture on the right schematically illustrates the limit of the leading clusters of (exponentially transformed) high scores grouped into blocks which, after a rescaling, collapse to a single point (at position $T_k$ say) which is then marked by its maximum and
 the shape of the cluster (denoted by $P_k$ and $(Q^{k}_{i,j})_{i,j\in \Z}$ say), see also the discussion after \Cref{rem:explanation}. In this case, the random fields $(Q^{k}_{i,j})$ are concentrated on the diagonal because of  \eqref{eq:align_spectralPr_intro}.

 Taking logarithms, from (\ref{eq:PPconv_lo_alignm_intro}) one can deduce the convergence
 \begin{align}\label{eq:StildePQ}
 \sum_{i,j=1}^n \delta_{\left(\tfrac{(i,j)}{n},\: S_{i,j}- \tfrac{2\log(n)}{\ts}\right)} 
\dto 
\sum_{k\in \N} \sum_{m\in \Z} \delta_{(\bT_k, \log(P_k) + \log(Q^{k}_{m,m})  )} \, 
 \end{align}
in the space of point measures on $[0,1]^2 \times \R$ with a suitable vague topology, see  \Cref{cor:sequence_alignment} for details.
In particular, 
this 
 yields \eqref{eq:Gumb} at once with the following new expression for the key constant therein
 $$
  K^* = \vartheta \tailcons \, .
 $$
 Note that $\vartheta$ is the so-called extremal index of the field $(S_{i,j})$, cf. \Cref{rem:extremal_index}.
 The same expression for $\vartheta$ appears in a different context in de Haan et al.~\cite[Section 3]{dehaan:resnick:rootzen:devries:1989} together with a suggested algorithm for its numerical computation.
 Moreover, the constant $\tailcons$ arising from \eqref{eq:exp_tail} is frequently encountered in the literature;  for various expressions 
of $C$ we refer to  \cite[Part C, XIII.5]{asmussen:2003}. Thus, in principle, for i.i.d. sequences (as in Altschul et al.~\cite{altschul:2001} for instance) the
 constants $K^*$ and $\alpha^*$ in  \eqref{eq:Gumb} do not have to be estimated since they can be directly determined from the marginal distribution of the letters and the scoring function $s$.
Note also that the distribution of random walks conditioned to stay negative (or positive) is discussed in detail by Tanaka~\cite{tanaka:1989} and Biggins~\cite{biggins:2003}.} 

Finally, \Cref{thm:PPconv_Alignments_intro} has some specific implications for the interpretation of real biological sequence alignments. First of all,
observe that the number of $\log(P_k)$'s above a given threshold $x$ in (\ref{eq:StildePQ}) is Poisson distributed, while the overshoots of
$\log(P_k) -x$ are i.i.d.\ and have an exponential distribution.
This fact gives a theoretical underpinning to the use of the peaks-over-a-threshold approach to the modeling of local alignments  in which the number of clusters (islands) of scores above a high threshold is modeled by  a Poisson random variable and where the local extremes of these clusters exceed a given threshold by a random amounts which are independent and exponentially distributed. For an application of this idea in two different contexts see
 Altschul et al.~\cite{altschul:2001} and Hansen~\cite{hansen:2009}.
 Moreover, if one connects the $k$ leading nonoverlapping clusters of high scores in the direction of the alignment,  one can incorporate gaps into the alignment
 and approximate $p$-values of such extended and possibly penalized local alignments
  (this would go into the direction of  Siegmund and Yakir~\cite{siegmund:yakir:2000,siegmund:yakir:2003}, cf. also  Metzler et al. \cite{metzler:grossman:wakolbinger:2002} where our deduced limit is simply assumed). Finally, zooming in into individual clusters, the theorem allows one to study the structure of subsequences 
  $(A_{i-k},\ldots, A_i)$  and  $(B_{j-k},\ldots, B_j)$ in a cluster of high scores,
  to see if it agrees with the predicted theoretical distribution of such a cluster given a very close alignment.  Each of these issues arguably deserves a detailed study and a real--life data illustration, but that would exceed the scope of our paper. }



\subsection{Organization of the paper}

{The rest of the article is organized as follows --- in Section \ref{sec:ComPoApp}, 
we present a general type of a Poissonian approximation theorem which allows one to study point processes constructed from general random fields with values in a Polish space under an appropriate dependence assumption. We also find sufficient conditions for such a dependence assumption to hold.
	Section \ref{sec:RegVarF} presents the point process convergence theory for stationary regularly varying random fields indexed over $\Z^d$ with $d\in \N$, complementing and extending the theory from the case $d=1$. In particular, we introduce the  notion of the tail field/process and point out at the subtleties of this  extension arising from the fact that  there is no unique natural ordering of the points in the $d$-dimensional lattice, for $d \geq 2$. Moreover, a special attention is dedicated to the notion of anchoring which clarifies the link between the tail process and the components $\vartheta$ and $(Q^k_{i,j})$ of the limiting point process from (\ref{eq:PPconvinLo_intro}).
	Section \ref{sec:PrThmAl} is entirely dedicated to the alignment problem and the proof of Theorem~\ref{thm:PPconv_Alignments_intro}. 
	 Finally, in Section \ref{sec:App}
	we give the proofs of \Cref{thm:tail_process} from Section~\ref{sec:RegVarF} and several auxiliary results used in  Section \ref{sec:PrThmAl}\,.} Some proofs and arguments which 
are straigthforward generalizations of the existing results can be found in \cite{thesis}.

\section{On (compound) Poisson approximation in general Polish spaces} \label{sec:ComPoApp}

For the general theory of point processes on Polish spaces and the so-called vague convergence  see e.g.\ Kallenberg~\cite{kallenberg:2017} or Resnick~\cite{resnick:1987}. Note that even though the latter reference considers only point processes on a locally compact state space,  most of the results transfer directly to the general Polish case. However, as proposed in \cite{basrak:planinic:2019}, we use a slight modification of the definition of vague convergence.

\subsection{Basic setup and the notion of vague convergence}\label{sec:basic_setup}
Let $\x$ be a Polish space. Denote by $\borel(\x)$ the Borel $\sigma$-field on $\x$ and choose a subfamily $\bounded(\x)\subseteq \borel(\x)$ of sets, called \textit{bounded} (Borel) sets of $\x$. When there is no fear of confusion, we will simply write $\borel$ and $\bounded$. We say that a Borel measure $\mu$ on $\x$ is \textit{locally} (or \textit{boundedly}) finite if $\mu(B)<\infty$ for all $B\in \bbou$. The space of all such measures is denoted by $\mx=\mathcal{M}(\mathbb{X}, \bbou)$. 

For measures $\mu,\mu_1,\mu_2,\ldots \in \mx$, we say that $\mu_n$ converge \textit{vaguely} to $\mu$  and denote this by $\mu_n\vto \mu$, if as $n\toi$,
\begin{equation*}
\mu_n(f)=\int f d\mu_n \to \int f d\mu= \mu(f) \, ,
\end{equation*}  
for all bounded and continuous real-valued functions $f$ on $\x$ with support being a bounded set. Denote by $\cbb$ the family of all such functions and by $\cbbp$ the subset of all nonnegative functions in $\cbb$.

In the sequel we assume that the family of bounded sets $\bbou$ satisfies the following properties and in that case say that $\bbou$ \textit{properly localizes} $\x$.
\begin{enumerate}[(i)]
\item $A\sub B \in \bounded$ for a Borel set $A\sub \x$ implies $A\in \bounded$, and $A,B\in \bounded$ implies $A\cup B\in \bounded$.
\item For each $B \in \bounded$ there exists an open set $U\in \bounded$  such that $\cl{B} \subseteq U$, where  $\cl{B}$ denotes the closure of $B$ in $\x$. 
\item There exists a sequence  $(K_m)_{m\in \N}$ of bounded Borel sets which cover $\x$ and such that every $B\in \bounded$ is contained in $K_m$ for some $m\in \N$.
\end{enumerate}

Moreover, the sequence $(K_m)_{m\in \N}$ can always be chosen to consist of open sets satisfying 
\begin{equation*}
 	\overline{K}_m\subseteq K_{m+1} \; ,\;  \text{for all} \; m\in \N.
 	\end{equation*}
Any such sequence $(K_m)$ is called a \textit{proper localizing sequence}.	 

By  the theory of Hu~\cite[Section V.5]{hu:1966}, properties (i)-(iii) are equivalent to the existence of a metric on $\x$ which generates the topology of $\x$ and such that the corresponding family of metrically bounded Borel subsets of $\x$ is precisely $\bounded$. Since this is exactly the framework of \cite[Chapter 4]{kallenberg:2017}, the theory developed therein directly applies. In particular, by \cite[Theorem 4.2]{kallenberg:2017}, the topology on $\mx$ inducing the notion of vague convergence, called the vague topology, is again Polish, see also \cite[Section 3]{basrak:planinic:2019}. 

Note that by choosing a different family of bounded sets one changes the space of locally finite measures and the related notion of vague convergence. 
\begin{example}\label{exa:M0Convergence} 
Let $(\spaceX',d')$ be a complete and separable metric space and $\mathbb{C}\subseteq \mathbb{X}'$ a closed set. 
Assume that $\spaceX$ is of the form $\spaceX=\spaceX'\setminus \mathbb{C}$ equipped with the subspace topology and
 set $\bounded$ to be the class of all {Borel} sets $B\subseteq \spaceX$ such that for some $\epsilon>0$, $d'(x,\mathbb{C})>\epsilon$ for all $x\in B$, where   $d'(x,\mathbb{C})=\inf\{d'(x,z):z\in \mathbb{C} \}$. In words, $B$ is bounded if it is bounded away from $\mathbb{C}$. Such $\bounded$ properly localizes $\x$ and one can take $K_m=\{x\in \spaceX: d'(x,\mathbb{C}) >1/m\}$, $m\in \N$, as a proper localizing sequence. The corresponding notion of convergence coincides with the so-called $\mathbb{M}_{\mathbb{O}}$-convergence from Lindskog et al.~\cite{lindskog:resnick:roy:2014} and is frequently used in extreme value theory.
\end{example}


Denote by
$\delta_{x}$ the Dirac measure concentrated at $x\in\mathbb{X}$.	 A \textit{(locally finite) point measure} on $\x$ is a locally finite measure $\mu\in \mx$ which is of the form $\mu=\sum_{i=1}^K \delta_{x_i}$ for some $K\in \{0,1,\dots\}\cup\{\infty\}$ and (not necessarily distinct) points $x_1,x_2,\dots,x_K$ in $\spaceX$. Denote by $\mpx$ the space of all point measures on $\x$ and endow it with the vague topology. Vague convergence of point measures is equivalent to the convergence of points in (almost) all bounded Borel sets of $\x$, see \cite[Proposition 2.8]{basrak:planinic:2019} for details.

A point process on $\x$ is a random element of the space $\mpx$ with respect to the Borel $\sigma$-algebra. 
We denote convergence in distribution by $\dto$. 
Recall, for point processes $N, N_1, N_2 ,\dots$, convergence of Laplace functionals $\ex[e^{-N_n(f)}]\to \ex[e^{-N(f)}]$ for all $f\in \cbbp$ is equivalent to $N_n\dto N$ in $\mpx$, see \cite[Theorem 4.11]{kallenberg:2017}. 
%
\begin{definition} \label{defn:conv_det_fam}
We say that a family $\F\subseteq \cbbp$  is (point process) {convergence determining} if, for any point processes $N, N_1, N_2 ,\dots$, convergence  $\ex[e^{-N_n(f)}]\to \ex[e^{-N(f)}]$ for all $f\in \F$ implies that $N_n\dto N$ in $\mpx$.
%
\end{definition}

For example,  one can take the subfamily $\F\subseteq\cbbp$ of functions which are Lipschitz continuous with respect to a suitable metric, see \cite[Proposition 4.1]{basrak:planinic:2019}.

\subsection{General Poisson approximation}
Let $(I_n)_{n\in \N}$ be a sequence of 
finite index sets but such that $\lim_{n\toi}|I_n|=\infty$, where $|I_n|$ denotes the number of elements in $I_n$.
For each $n\in \N$, let $(X_{n,i}: i\in I_n)$ be a family of 
random elements in a topological space $\x'$. Assume that 
there exists a Polish subset $\mathbb{X}$ of $\mathbb{X}'$ (e.g.\ as in \Cref{exa:M0Convergence}) with a family of bounded Borel sets $\bborel=\bborel(\x)$ such that, as $n\toi$,
\begin{align}\label{eq:nullArrayCond}
\sup_{i\in I_n}\pr(X_{n,i}\in B) \to 0 \, , \;  B\in \bbou \, .
\end{align}
The central theme of this section is convergence in distribution in $\mpx$ of the  point processes 
\begin{equation*}
N_n=\sum_{i\in I_n}\delta_{X_{n,i}} 
\, , \; n\in \N \, ,
\end{equation*}
restricted to the space $\x$. For a   locally finite measure $\lambda$ on $\x$ denote by $\PPP(\lambda)$ the distribution of a Poisson point process on $\x$ with intensity measure $\lambda$.

Observe that if for each $n\in \N$, $(X_{n,i}: i\in I_n)$ were independent, (\ref{eq:nullArrayCond}) would imply that measures $\delta_{X_{n,i}}$ on $\x$, $n\in \N, i\in I_n$ form a null-array (see \cite[p.\ 129]{kallenberg:2017})
and by the so-called Grigelionis theorem (see \cite[Corollary 4.25]{kallenberg:2017}), for $\lambda\in \mx$, convergence $N_n\dto N \sim \PPP(\lambda)$ holds in $\mpx$  if and only if  
$$\ex[N_n(\,\cdot\,)]=\sum_{i\in I_n} \pr(X_{n,i}\in \cdot)\vto \lambda$$
 in $\mx$.



In general, one can still obtain the same Poisson limit if the asymptotic distributional behavior of $N_n$'s is indistinguishable from its independent version. 

More precisely, let for each $n\in \N$, $(X_{n,i}^*:i\in I_n)$ be independent random elements such that for all $i\in I_n$, $X_{n,i}^*$ is distributed as $X_{n,i}$, and denote by $N_n^*=\sum_{i\in I_n}\delta_{X_{n,i} ^*}$ the corresponding point processes on $\x$. Further, let $\F$ be a class of measurable and nonnegative functions on $\x$ with {bounded support.} We say that the family $(X_{n,i}:n\in \N , \: i\in I_n)$ is \textit{asymptotically $\F$-independent} ($AI(\F)$) if
 \begin{equation*}
 \left|\ex\left[e^{-N_n(f)}\right]-\ex\left[e^{-N_n^*(f)}\right]\right| =\big| \ex\left[e^{-\sum_{i\in I_n}f (X_{n,i})}\right] - \prod_{i\in I_n} \ex\left[ e^{
      - f (X_{n,i}) } \right] \big| \to 0 , \; \text{as} \; n\toi,
  \end{equation*} 
for all $f\in \F$, where  we set  $f(x)=0$ for all $x\in \mathbb{X}' \setminus \mathbb{X}$.  
To obtain  meaningful results we will require that the functions in $\F$ determine convergence in distribution in $\mpx$ in the sense of \Cref{defn:conv_det_fam}. 
{Since $N_n^*\dto N \sim \PPP(\lambda)$  implies convergence $\EE[e^{-N_n^*(f)}]\to\EE[e^{-N(f)}]$ for all $f\in \cbbp$, the following result is now immediate.}

\begin{theorem}\label{thm:gen_poisson_apprx}
Assume that (\ref{eq:nullArrayCond}) holds and that there exists a measure $\lambda\in \mathcal{M}(\mathbb{X})$  
such that, 
as $n\toi$,
\begin{equation}\label{eq:independentblocksconvergence}
\sum_{i\in I_n} \pr(X_{n,i}\in \cdot )\vto \lambda \; .
\end{equation}
Then for any convergence determining family $\F\subseteq \cbbp$,  $N_n\dto N \sim \PPP(\lambda)$  in $\mathcal{M}_p(\mathbb{X})$ if and only if $(X_{n,i}:n\in \N , \: i\in I_n)$ is $AI(\F)$. 
\end{theorem}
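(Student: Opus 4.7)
The plan is to reduce the theorem to a direct application of Grigelionis' Poisson convergence theorem for the independent version $N_n^* = \sum_{i \in I_n} \delta_{X_{n,i}^*}$, and then to bridge $N_n$ and $N_n^*$ via the $AI(\F)$ condition at the level of Laplace functionals.

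First, I would verify that the Kallenberg framework applies to the current bounded set structure $\bborel$. Since Section~\ref{sec:basic_setup} guarantees (via Hu's metrization) that $\bborel$ is exactly the collection of metrically bounded sets for some complete metric generating the topology of $\x$, the results of \cite[Chapter 4]{kallenberg:2017} apply verbatim. The null-array hypothesis required there, namely
\[
\sup_{i \in I_n} \pr\bigl(\delta_{X_{n,i}^*}(B) \geq 1\bigr) = \sup_{i \in I_n} \pr(X_{n,i} \in B) \to 0 \quad \text{for every } B \in \bbou,
\]
is precisely condition (\ref{eq:nullArrayCond}). Combined with the intensity convergence (\ref{eq:independentblocksconvergence}), which coincides with $\sum_{i \in I_n} \ex[\delta_{X_{n,i}^*}(\cdot)] \vto \lambda$, Grigelionis' theorem \cite[Corollary 4.25]{kallenberg:2017} yields $N_n^* \dto N \eind \PPP(\lambda)$ in $\mpx$. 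By the Laplace functional characterization \cite[Theorem 4.11]{kallenberg:2017}, this is equivalent to
\[
\ex\bigl[e^{-N_n^*(f)}\bigr] \to \ex\bigl[e^{-N(f)}\bigr] \quad \text{for every } f \in \cbbp,
\]
and in particular for every $f \in \F$, since $\F \subseteq \cbbp$ by assumption.

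For the forward implication, assume $(X_{n,i})$ is $AI(\F)$. Writing, for $f \in \F$,
\[
\ex\bigl[e^{-N_n(f)}\bigr] = \ex\bigl[e^{-N_n^*(f)}\bigr] + \Bigl(\ex\bigl[e^{-N_n(f)}\bigr] - \ex\bigl[e^{-N_n^*(f)}\bigr]\Bigr),
\]
the first term tends to $\ex[e^{-N(f)}]$ by the previous step, while the parenthesized difference vanishes by the $AI(\F)$ property. Hence $\ex[e^{-N_n(f)}] \to \ex[e^{-N(f)}]$ for all $f \in \F$, and the convergence-determining assumption on $\F$ (\Cref{defn:conv_det_fam}) immediately upgrades this to $N_n \dto N$ in $\mpx$. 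Conversely, if $N_n \dto N$, then $\ex[e^{-N_n(f)}] \to \ex[e^{-N(f)}]$ for every $f \in \cbbp$, hence for every $f \in \F$; subtracting this from the corresponding convergence for $N_n^*$ gives exactly the $AI(\F)$ property.

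There is no serious obstacle here: the result is essentially a packaging of Grigelionis' theorem plus the convergence-determining hypothesis. The only point requiring attention is that the Polish/bounded-set framework of Section~\ref{sec:basic_setup} is compatible with the metric-based setup of \cite{kallenberg:2017}, which has already been settled by the discussion there; once this is noted, the proof reduces to a two-line triangle inequality on Laplace functionals.
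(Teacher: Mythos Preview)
Your proposal is correct and follows essentially the same approach as the paper: the paper states just before the theorem that ``since $N_n^*\dto N \eind \PPP(\lambda)$ implies convergence $\EE[e^{-N_n^*(f)}]\to\EE[e^{-N(f)}]$ for all $f\in \cbbp$, the following result is now immediate,'' which is exactly the Grigelionis-plus-triangle-inequality argument you spell out in detail.
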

\begin{remark}
{Observe that we have assumed that $\F$ consists only of continuous functions. However, $N_n^*\dto N $ actually implies $\EE[e^{-N_n^*(f)}]\to\EE[e^{-N(f)}]$ for all nonnegative and bounded functions $f$ with bounded support for which $N(disc(f))=0$ almost surely, where $disc(f)$ denotes  the set of all discontinuity points of $f$ (see \cite[Lemma 4.12]{kallenberg:2017}).} Consequently, if (\ref{eq:independentblocksconvergence}) holds and $N \sim \PPP(\lambda)$,
in the necessary and sufficient condition for $N_n\dto N$, one can allow $\F$ 
 to be a sufficiently rich class  of functions $f$ which are not necessarily continuous, 
e.g.\ $\F$ could consist of nonnegative simple functions with bounded support, see \cite[Theorem 4.11]{kallenberg:2017} for details.
\end{remark}

\begin{remark}
Assume that (\ref{eq:nullArrayCond}) holds and that $X_{n,i}$'s are $AI(\F)$ for some convergence determining family $\F$. In this case, if $N_n$ converge in distribution to some limit, $N$ say, then $N$ is necessarily a Poisson process. Indeed, since also $N_n^*\dto N$,  by \cite[Theorem 4.22]{kallenberg:2017} $N$ is infinitely divisible and moreover, by the construction of $N_n^*$, its so-called L\'evy measure (see \cite[p.\ 89]{kallenberg:2017})  is concentrated on the set $\{\delta_x : x\in \x\}$ which implies that $N$ is Poisson.
\end{remark}

Observe that the assumption $AI(\F)$ implies   that  $X_{n,i}$, $i\in I_n$ asymptotically behave as if they were independent, but only on the bounded sets of the space $\mathbb{X}$. The key fact here is that all functions in $\F$ have bounded support, so for every fixed $f\in  \F$, $N_n(f)$ is unaffected by the behavior of $X_{n,i}$'s outside of a fixed bounded set. Sufficient condition for $AI(\F)$ to hold is given in Proposition \ref{prop:AI_condition} below.

{First we state a stationary version of the previous result, cf.\ \cite[Proposition 3.21]{resnick:1987}.
For $d\in \N$} consider   the space $[0,1]^d \times \mathbb{X}$ with respect to the product topology and with {$B'\in \borel([0,1]^d \times \mathbb{X})$ being bounded if the set $\{x\in \x : (\bt,x)\in B' \text{ for some } \bt\in [0,1]\}$ is bounded in $\x$.}


\begin{corollary}\label{cor:poissonapprox_stationarycase}
Assume that $I_n=\{1,2,\dots,k_n\}^d \subseteq \Z^{d}$ for some $d\in\N$ with $k_n\toi$ and that $(X_{n,\bi}: \bi\in I_n)$ are identically distributed for every $n\in \N$. If there exists a measure $\nu\in \mathcal{M}(\mathbb{X})$
such that,
as $n\toi$,
 \begin{equation}\label{eq:independentblocksconvergence_idcase}
k_n ^d \pr(X_{n,\bone}\in \cdot )\vto \nu \, ,
\end{equation}
then for any convergence determining family $\F'$ on $[0,1]^d \times \mathbb{X}$,  
$$N_n'=\sum_{\bi\in I_n}\delta_{(\bi/k_n,X_{n,\bi})} \dto N' \sim \PPP(\mbox{Leb} \times \nu)$$ 
in $\mathcal{M}_p([0,1]^d \times \mathbb{X})$ if and only if $((\bi/k_n, X_{n,\bi}): n\in \N, \bi \in I_n)$ is $AI(\F')$, where $\mbox{Leb}$ denotes the Lebesgue measure on $[0,1]^d$.
\end{corollary}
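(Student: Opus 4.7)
My plan is to reduce the statement to an application of \Cref{thm:gen_poisson_apprx} on the product space $\spaceY=[0,1]^d\times \spaceX$, with the random elements $Y_{n,\bi}:=(\bi/k_n, X_{n,\bi})$, $\bi\in I_n$. The first step is to observe that the class of bounded sets specified on $\spaceY$ --- sets whose $\spaceX$-projection is contained in some $K\in\bbou(\spaceX)$ --- properly localizes $\spaceY$, with $([0,1]^d\times K_m)_{m\in\N}$ serving as a proper localizing sequence lifted from one for $\spaceX$. The null-array condition \eqref{eq:nullArrayCond} on $\spaceY$ is then immediate: for bounded $B'\subseteq\spaceY$ with $B'\subseteq [0,1]^d\times K$, identical distribution of the $(X_{n,\bi})_{\bi\in I_n}$ together with \eqref{eq:independentblocksconvergence_idcase} gives
\[
\sup_{\bi\in I_n}\PP(Y_{n,\bi}\in B')\leq \PP(X_{n,\bone}\in K)=k_n^{-d}\bigl(k_n^d\PP(X_{n,\bone}\in K)\bigr)\longrightarrow 0,
\]
since the bracketed factor converges to the finite limit $\nu(K)$ while $k_n\to\infty$.

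The substantive step is to establish the mean-measure vague convergence $\mu_n:=\sum_{\bi\in I_n}\PP(Y_{n,\bi}\in\cdot)\vto \mathrm{Leb}\times\nu$. Writing $\nu_n:=k_n^d\PP(X_{n,\bone}\in\cdot)$, for any $f\in CB_b^+(\spaceY)$ one has
\[
\mu_n(f)=\int h_n(x)\,\rmd\nu_n(x),\qquad h_n(x):=\frac{1}{k_n^d}\sum_{\bi\in I_n} f(\bi/k_n,x).
\]
Continuity of $\bt\mapsto f(\bt,x)$ on the compact cube $[0,1]^d$ makes $h_n(x)$ a Riemann sum converging pointwise to $h(x):=\int_{[0,1]^d}f(\bt,x)\,\rmd\bt$, and dominated convergence together with boundedness of $\mathrm{supp}(f)$ places $h$ in $CB_b^+(\spaceX)$. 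By Fubini, $(\mathrm{Leb}\times\nu)(f)=\int h\,\rmd\nu$, and $\nu_n\vto\nu$ yields $\int h\,\rmd\nu_n\to\int h\,\rmd\nu$, so it remains to show that $\int(h_n-h)\,\rmd\nu_n\to 0$.

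Here lies the main obstacle: Riemann-sum convergence is only pointwise, and the bounded set $K\subseteq\spaceX$ carrying the support of $h$ need not be $\spaceX$-compact, so uniformity is not automatic. I would overcome this by tightness. Since $\nu|_K$ is finite on the Polish space $\spaceX$, for any $\eta>0$ there exists a compact $C_\eta\subseteq K$ with $\nu(K\setminus C_\eta)<\eta$; outer regularity produces an open bounded $V\subseteq \spaceX$ with $K\setminus C_\eta\subseteq V$ and $\nu(V)<2\eta$, and a Urysohn-type continuous cutoff $\phi_\eta\in CB_b^+(\spaceX)$ with $\mathbf{1}_{K\setminus C_\eta}\leq \phi_\eta\leq \mathbf{1}_{V'}$ for a slightly larger bounded open $V'$ satisfies $\int\phi_\eta\,\rmd\nu_n\to\int\phi_\eta\,\rmd\nu\leq 3\eta$, so $\limsup_n\nu_n(K\setminus C_\eta)\leq 3\eta$. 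On the compact product $[0,1]^d\times C_\eta$ the function $f$ is uniformly continuous, so $h_n\to h$ uniformly on $C_\eta$. Splitting
\[
\Bigl|\int(h_n-h)\,\rmd\nu_n\Bigr|\leq \sup_{x\in C_\eta}|h_n(x)-h(x)|\cdot \nu_n(K)\;+\;2\|f\|_\infty\cdot \nu_n(K\setminus C_\eta)
\]
and sending first $n\to\infty$ and then $\eta\downarrow 0$ delivers the desired convergence (using that $\sup_n\nu_n(K)<\infty$ by testing a further continuous cutoff). Both hypotheses of \Cref{thm:gen_poisson_apprx} are now verified, and the advertised equivalence between $N_n'\dto\PPP(\mathrm{Leb}\times\nu)$ and $AI(\F')$ follows at once.
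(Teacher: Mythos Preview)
Your argument is correct, and the overall strategy---reduce to \Cref{thm:gen_poisson_apprx} on the product space by verifying \eqref{eq:nullArrayCond} and \eqref{eq:independentblocksconvergence}---matches the paper. The difference lies entirely in how you establish the vague convergence $\sum_{\bi\in I_n}\PP(Y_{n,\bi}\in\cdot)\vto \mathrm{Leb}\times\nu$.

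The paper avoids your Riemann-sum and tightness machinery altogether: it tests the convergence on product rectangles $(\ba,\bb]\times B$ with $B$ a $\nu$-continuity set, where by identical distribution
\[
\sum_{\bi\in I_n}\PP\bigl((\bi/k_n,X_{n,\bi})\in(\ba,\bb]\times B\bigr)=\frac{1}{k_n^d}\prod_{j=1}^d\lfloor k_n(b_j-a_j)\rfloor\cdot k_n^d\PP(X_{n,\bone}\in B)\to \prod_{j=1}^d(b_j-a_j)\cdot\nu(B),
\]
and then invokes the Portmanteau-type characterization of vague convergence (\cite[Lemma~4.1]{kallenberg:2017}). This is a two-line computation because the index grid $\{\bi/k_n\}$ already decouples from the $\spaceX$-component. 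Your route---writing $\mu_n(f)=\int h_n\,\rmd\nu_n$, controlling $\int(h_n-h)\,\rmd\nu_n$ via tightness of $\nu$ on Polish $\spaceX$ and uniform continuity of $f$ on $[0,1]^d\times C_\eta$---is sound and more self-contained (it does not appeal to the rectangle characterization), but considerably longer. One small imprecision: in your null-array step you assert that $k_n^d\PP(X_{n,\bone}\in K)\to\nu(K)$, but vague convergence only guarantees this for continuity sets; what you actually need and have is $\limsup_n k_n^d\PP(X_{n,\bone}\in K)\le\nu(\overline K)<\infty$, which suffices.
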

%
\begin{proof} We simply apply Theorem \ref{thm:gen_poisson_apprx} to random elements $X_{n,\bi}':=(\bi/k_n, X_{n,\bi})$, $n\in \N,\bi \in I_n$. 
Take an arbitrary $B'\in \bborel([0,1]^d \times \mathbb{X})$ and define $B=\{x\in \x : (\bt,x)\in B' \text{ for some } \bt\in [0,1]\}$. Since $B\in \bborel(\x)$, \eqref{eq:independentblocksconvergence_idcase} and \cite[Lemma 4.1(iv)]{kallenberg:2017} imply that
\[
\limsup_{n\toi} \sum_{\bi \in I_n} \pr(X_{n,\bi}'\in B')=\limsup_{n\toi} k_n^d \pr(X_{n,\bone}\in B)\leq \nu(\overline{B})<+\infty \; .
\]
Hence, \eqref{eq:nullArrayCond} holds since $k_n\toi$. 
 
 Further, note that for arbitrary $\ba=(a_1,\dots, a_d)$ and $\bb=(b_1,\dots, b_d)$ in $[0,1]^d$ such that $a_j\leq b_j$ for all $j=1,\dots,d$ and a set $B\in\mathcal{B}_b$ such that $\nu(\partial B)=0$, \eqref{eq:independentblocksconvergence_idcase} implies that as $n\toi$,
\begin{align*}
\sum_{\bi\in I_n} \pr(X_{n,\bi}'\in (\ba,\bb]\times B) = \frac{1}{k_n^d} \prod_{j=1}^d \lfloor k_n(b_j - a_j)\rfloor \cdot k_n^d \pr(X_{n,\bone}\in B)\to \prod_{j=1}^d (b_j - a_j) \cdot \nu(B) \;
\end{align*}  
By \cite[Lemma 4.1]{kallenberg:2017}, this implies that $\sum_{\bi\in I_n} \pr(X_{n,\bi}'\in \cdot)\vto \mbox{Leb}\times \nu$  in $\mathcal{M}([0,1]^d \times \mathbb{X})$,  i.e.\ (\ref{eq:independentblocksconvergence}) holds with $\lambda=\mbox{Leb}\times \nu$. 
\end{proof}


\subsection{Sufficient conditions for asymptotic $\F$-independence}
For each $i\in I_n$, choose a subset of the index set $B_n(i)\subseteq I_n$ containing $i$, and call it the \textit{neighborhood of dependence} of $i\,$.  Intuitively, it will be {beneficial to choose $B_{n}(i)$ as small as possible,} but such that $X_{n,i}$ is (nearly) independent of all $X_{n,j}$ for $j\notin B_n(i)$.

Select an arbitrary ordering of the elements in $I_n$. Without loss of generality, we will assume that $I_n=\{1,2,\dots,m_n\}$ where $m_n\toi$ as $n\toi$. For all $i\in I_n$ {partition $\{i+1,\dots,m_n\}$ into}
$\tilde{B}_n(i):=\{j\in B_n(i) : j>i\}$ and $\tilde{B}^c_n(i):=\{j\notin B_n(i) : j>i\}$. Further, fix an arbitrary sequence $(K_m)_{m\in \N}\subseteq \bborel$ of $\bborel$ such that for every $B\in \bborel$, $B\subseteq K_m$ for some $m\in \N$.

For a given neighborhood structure $(B_n(i) : n\in \N, \, i\in I_n)$ and for all $m,n\in \N$ define 
\begin{align*}
b_{n,1}^m&=\sum_{i\in I_n}\sum_{j\in \tilde{B}_n(i)} \pr(X_{n,i}\in K_m)\cdot \pr(X_{n,j}\in K_m)\, , \\
b_{n,2}^m &=\sum_{i\in I_n}\sum_{j\in \tilde{B}_n(i)} \pr(X_{n,i}\in K_m ,X_{n,j}\in K_m) \, .
\end{align*}
Furthermore, for all $n\in\N$ and an arbitrary nonnegative measurable function $f$ on $\mathbb{X}$ 
define 
\begin{align*}
b_{n,3}(f)=\sum_{i\in I_n}\big|\ex\big[e^{-f(X_{n,i})} \prod_{j\in \tilde{B}^c_n(i)} e^{-f(X_{n,j})}\big]-\ex\big[e^{-f(X_{n,i})}\big]\cdot \ex\big[\prod_{j\in \tilde{B}^c_n(i)} e^{-f(X_{n,j})}\big]\big| \; .
\end{align*}

\begin{proposition}\label{prop:AI_condition}
Let $f$ be a nonnegative measurable function on $\x$ with bounded support. If $m\in \N$ is such that the support of $f$ is contained in $K_m$, then {for all $n\in \N$},
\begin{equation*}
\big| \ex\left[e^{-\sum_{i\in I_n}f (X_{n,i})}\right] - \prod_{i\in I_n} \ex\left[ e^{
      - f (X_{n,i}) } \right] \big| \leq b_{n,1}^m + b_{n,2}^m + b_{n,3}(f)
\end{equation*}
for all $n\in \N$. In particular, if there exists a neighborhood structure $(B_n(i) : n\in \N, \, i\in I_n)$ such that for all $m\in \N$ and every $f\in \F$  
\begin{equation*}
\lim_{n\toi }b_{n,1}^m=\lim_{n\toi }b_{n,2}^m=\lim_{n\toi } b_{n,3}(f)=0 \; ,
\end{equation*}
then the family $(X_{n,i}:n\in \N , \: i\in I_n)$ is $AI(\F)$.
\end{proposition}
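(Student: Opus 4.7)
I set $g_i = e^{-f(X_{n,i})}$ and $h_i = 1 - g_i$; both take values in $[0,1]$, and since $\supp(f) \subseteq K_m$ we have the key bound $h_i \leq \mathbbm{1}\{X_{n,i} \in K_m\}$. With the ordering $I_n = \{1,\dots,m_n\}$ from the statement, I split
\[
\prod_{j>i} g_j = U_i V_i, \qquad U_i = \prod_{j \in \tilde{B}_n(i)} g_j, \qquad V_i = \prod_{j \in \tilde{B}_n^c(i)} g_j.
\]
A standard telescoping identity then yields
\[
\ex\Big[\prod_{i \in I_n} g_i\Big] - \prod_{i \in I_n} \ex[g_i] = \sum_{i \in I_n} \Big(\prod_{j<i} \ex[g_j]\Big)\, R_i, \qquad R_i := \operatorname{Cov}(g_i, U_i V_i),
\]
and since each $\ex[g_j] \in [0,1]$ the leading product is bounded by $1$, so the task reduces to bounding $\sum_i |R_i|$.

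The essential trick is that covariance is invariant under additive shifts, so $R_i = -\operatorname{Cov}(h_i, U_i V_i)$; this replaces $g_i$ (which equals $1$ off $K_m$) by $h_i$ (which vanishes off $K_m$), unlocking the indicator bound. Writing $U_i = 1 - (1 - U_i)$ splits
\[
R_i = -\bigl(\ex[h_i V_i] - \ex[h_i] \ex[V_i]\bigr) + \bigl(\ex[h_i V_i (1-U_i)] - \ex[h_i] \ex[V_i (1-U_i)]\bigr).
\]
By shift invariance again, the first bracket equals $\ex[g_i V_i] - \ex[g_i]\ex[V_i]$ up to sign, so summed over $i$ it yields exactly $b_{n,3}(f)$. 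For the two remaining nonnegative expectations I apply the elementary inequality $1 - U_i \leq \sum_{j \in \tilde{B}_n(i)} h_j$ (from $1 - \prod a_k \leq \sum (1-a_k)$ for $a_k \in [0,1]$) together with $V_i \leq 1$ and $h_i, h_j \leq \mathbbm{1}\{\cdot \in K_m\}$, giving
\[
\ex[h_i V_i (1-U_i)] \leq \sum_{j \in \tilde{B}_n(i)} \pr(X_{n,i} \in K_m,\, X_{n,j} \in K_m),
\]
\[
\ex[h_i]\ex[V_i(1-U_i)] \leq \sum_{j \in \tilde{B}_n(i)} \pr(X_{n,i}\in K_m)\, \pr(X_{n,j}\in K_m),
\]
which sum over $i$ to $b_{n,2}^m$ and $b_{n,1}^m$ respectively. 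Assembling the three pieces produces the claimed inequality.

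The only real obstacle is finding a decomposition in which the three error terms match exactly $b_{n,1}^m$, $b_{n,2}^m$, $b_{n,3}(f)$; had we kept $g_i$ rather than switching to $h_i$, the indicator bound would have been false and the sums could not be collected into the desired form. The final $AI(\F)$ assertion is then immediate: given $f \in \F$, choose $m$ with $\supp(f) \subseteq K_m$, and all three summands vanish by the hypothesis.
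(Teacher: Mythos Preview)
Your proof is correct and follows essentially the same route as the paper's: both arguments telescope the difference into a sum of covariances $\operatorname{Cov}(g_i,\prod_{j>i}g_j)$, split the tail product into the neighborhood part $U_i$ and its complement $V_i$, and exploit the shift-invariance of covariance to replace $g_i$ by $g_i-1=-h_i$ so that the indicator bound $h_i\le\mathbbm{1}\{X_{n,i}\in K_m\}$ becomes available. The only cosmetic difference is the order of the two moves (the paper first writes $U_i=(U_i-1)+1$ and then shifts $g_i$, whereas you shift $g_i$ first and then write $U_i=1-(1-U_i)$), and the paper bounds $|1-U_i|$ via $\pr\bigl(\bigcup_{j\in\tilde B_n(i)}\{X_{n,j}\in K_m\}\bigr)$ followed by a union bound, while you use the equivalent inequality $1-\prod a_k\le\sum(1-a_k)$ directly.
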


\begin{proof}
The proof is an adaptation of argument in Nakhapetyan~\cite[Lemma 3]{nakhapetyan:1988}, though the main idea goes back to~\cite[Theorem 4]{banys:1980}. Since $e^{-f}$ is positive and bounded by $1$ it follows that
   \begin{align*}
     &\big| \ex\left[e^{-\sum_{i\in I_n}f (X_{n,i})}\right] - \prod_{i\in I_n} \ex\left[ e^{
      - f (X_{n,i}) } \right] \big|   \\
      & \leq\sum_{i=1}^{m_n -1}\big|\ex\big[e^{-f(X_{n,i})}\prod_{j=i+1}^{m_n} e^{-f(X_{n,j})}\big]-\ex\big[e^{-f(X_{n,i})}\big]\cdot\ex\big[\prod_{j=i+1}^{m_n} e^{-f(X_{n,j})}\big]\big| =: \sum_{i=1}^{m_n -1} \varepsilon_i \; .
  \end{align*} 
Fix now an arbitrary $i\in \{1,\dots,m_n-1\}$. After writing 
\[
\prod_{j=i+1}^{m_n} e^{-f(X_{n,j})}=\prod_{j\in \tilde{B}_n(i)} e^{-f(X_{n,j})}\prod_{j\in \tilde{B}^c_n(i)} e^{-f(X_{n,j})} \; ,
\] 
one can easily check that
\begin{align*}
\varepsilon_i& \leq \Big|\ex\big[e^{-f(X_{n,i})}\cdot\big(\prod_{j\in \tilde{B}_n(i)} e^{-f(X_{n,j})}-1\big)\prod_{j\in \tilde{B}^c_n(i)} e^{-f(X_{n,j})}\big]\\
&-\ex\big[e^{-f(X_{n,i})}\big]\cdot\ex\big[\big(\prod_{j\in \tilde{B}_n(i)} e^{-f(X_{n,j})}-1\big)\prod_{j\in \tilde{B}^c_n(i)} e^{-f(X_{n,j})}\big]\Big| \\
&+ \big|\ex\big[e^{-f(X_{n,i})} \prod_{j\in \tilde{B}^c_n(i)} e^{-f(X_{n,j})}\big]-\ex\big[e^{-f(X_{n,i})}\big]\cdot \ex\big[\prod_{j\in \tilde{B}^c_n(i)} e^{-f(X_{n,j})}\big]\big|\; .
\end{align*}
  Note that the first summand on the right hand side of the previous inequality equals
  \begin{align*}
   &\big|\ex\big[\big(e^{-f(X_{n,i})}-1\big)\cdot\big(\prod_{j\in \tilde{B}_n(i)} e^{-f(X_{n,j})}-1\big)\prod_{j\in \tilde{B}^c_n(i)} e^{-f(X_{n,j})}\big]\\
&-\ex\big[\big(e^{-f(X_{n,i})}-1\big)\big]\cdot\ex\big[\big(\prod_{j\in \tilde{B}_n(i)} e^{-f(X_{n,j})}-1\big)\prod_{j\in \tilde{B}^c_n(i)} e^{-f(X_{n,j})}\big]\big| \; , 
  \end{align*}
 and since $e^{-\sum_k f(x_k)}-1\neq 0$ implies that $f(x_k)>0$, and hence $x_k\in K_m$, for at least one $k$, we obtain that
 \begin{align*}
\varepsilon_{i} &\leq \pr\big(X_{n,i}\in K_{m},  \bigcup_{j\in \tilde{B}_n(i)} \{X_{n,j}\in K_{m}\}\big)+\pr\big(X_{n,i}\in K_{m}\big) \cdot \pr\big(\bigcup_{j\in \tilde{B}_n(i)} \{X_{n,j}\in K_{m}\}\big) \\
& +\big|\ex\big[e^{-f(X_{n,i})} \prod_{j\in \tilde{B}^c_n(i)} e^{-f(X_{n,j})}\big]-\ex\big[e^{-f(X_{n,i})}\big]\cdot \ex\big[\prod_{j\in \tilde{B}^c_n(i)} e^{-f(X_{n,j})}\big]\big|\; .
\end{align*}

Hence, 
\[
\big| \ex\left[e^{-\sum_{i\in I_n}f (X_{n,i})}\right] - \prod_{i\in I_n} \ex\left[ e^{
      - f (X_{n,i}) } \right] \big| \leq \sum_{i=1}^{m_n - 1}\varepsilon_i \leq b_{n,1}^m + b_{n,2}^m + b_{n,3}(f) \; .
      \]
\end{proof}

\begin{remark}\label{rem:b_3'}
Recall, $(X_{n,i}^*:i\in I_n)$ are independent random elements such that for all $i\in I_n$, $X_{n,i}^*$ is distributed as $X_{n,i}$. Further,
let $(X_{n,i}^*: i\in I_n)$ and $(X_{n,i}: i\in I_n)$ be defined on the same probability space and independent. We can then bound $b_{n,3}(f)$ by 
\begin{align*}
  b_{n,3}(f)&\leq \sum_{i\in I_n}\ex\big| \ex\big[ e^{-f(X_{n,i})}-e^{-f(X_{n,i}^*)} \mid \sigma(X_{n,j}:j\in \tilde{B}^c_n(i))\big]\big|\\
  & = \sum_{i\in I_n}\ex\big| \ex\big[ e^{-f(X_{n,i})} \mid \sigma(X_{n,j}:j\in \tilde{B}^c_n(i))\big]-\ex\big[e^{-f(X_{n,i})}\big]\big| \; .
\end{align*}
Since for any $f\in \cbbp$ the function $1-e^{-f}$ is also an element $\cbbp$ and further bounded by $1$, it follows that 
\begin{equation*}
\sum_{i\in I_n}\ex\big| \ex\big[ f(X_{n,i}) \mid \sigma(X_{n,j}:j\in \tilde{B}^c_n(i))\big]-\ex\big[f(X_{n,i})\big]\big| \to 0
\end{equation*}
for all $f\in \cbbp$ which are bounded by $1$ implies that $b_{n,3}(f)\to 0$ for all $f\in \cbbp$. 
\end{remark}

\begin{remark}
The concept of neighborhoods implicitly appears already in Banys~\cite[Theorem 4]{banys:1980}. There, essentially the same sufficient conditions for convergence of $N_n$ to a Poisson point process are given but with, in our notation, neighborhoods of the form  $\tilde{B}_n(i)=\{i+1,\dots,i+r_n\}$ and $\tilde{B}^c_n(i)=\{i+r_n+1,\dots,m_n\}$ for all $i\in I_n$ where $(r_n)_{n\in \N}$ is a sequence of nonnegative integers.
The proof is similar to ours and even though it is stated only for the case when $\x$ is locally compact, it transfers directly to the case of a general Polish space.
\end{remark}

\begin{remark}
Similar results were also obtained by Schuhmacher~\cite[Theorem 2.1]{schuhmacher:2005}, but with a completely different approach, using the Chen-Stein method. As a consequence, Schuhmacher even provides bounds on the convergence in the so-called Barbour-Brown distance $d_2$. However, this result  does not directly imply our results,
 see \cite[Remark 2.4(b)]{schuhmacher:2005} for the comparison to the result of Banys~\cite{banys:1980} which is also relevant to our case.
\end{remark}

\begin{example}
For Bernoulli random variables $X_{n,i}$
	such that   $\lim_{n\toi} \sup_{i\in I_n} \pr(X_{n,i}=1)=0$ and $\lim_{n\toi}\sum_{i\in I_n} \pr(X_{n,i}=1) =\lambda\in (0,\infty)$, one can set $\mathbb{X}'=\{0,1\}$ and $\mathbb{X}=K_m=\{1\}$ for all $m\in \N$. 
	Using Theorem \ref{thm:gen_poisson_apprx} together with Proposition \ref{prop:AI_condition} and Remark \ref{rem:b_3'}, we recover the result of Arratia et al.~\cite[Theorem 1]{arratia:goldstein:gordon:1989} on convergence in distribution of $\sum_{i\in I_n} X_{n,i}$ to a Poisson random variable with intensity $\lambda$, but without the bound on the distance in total variation.

\end{example}

\section{Regularly varying fields}\label{sec:RegVarF}



\subsection{The tail field}\label{sub:tail}

Consider a (strictly) stationary $\R$-valued random field $\bX=(X_{\bi}:\bi\in\Z^d)$ with $d\in \N$. 
For every finite and nonempty subset of indices $I\subseteq \Z^d$, denote by $\bX_{I}$  the $\R^{|I|}$-valued random vector $(X_{\bi}: \bi \in I)$, i.e.\ $\bX_{I}$'s represent finite-dimensional distributions of $\bX$. 

We say that a random field $\bY=(Y_{\bi}:\bi\in\Z^d)$ is the \textit{tail field} (or \textit{tail process}) of $\bX$, if for all finite and nonempty $I\subseteq \Z^d$, 
\begin{align}\label{eq:conv_to_tail}
u^{-1} \bX_{I}
\, \big| \, |X_{\bo}|> u \dto 
\bY_{I}
\, , \; \text{as } u\toi \, ,
\end{align}
where $\bo=(0,\dots,0)\in \Z^d$. Here and in the rest of the paper, $A(u)\mid B(u) \dto C$ as $u\toi$ for a family of random elements $A(u),C$ and events $B(u)$, $u>0$, means that the law of $A(u)$ conditionally on $B(u)$ converges weakly as $u\toi$ to the law of $C$.

Note that in (\ref{eq:conv_to_tail}) we implicitly assume that $\pr(|X_{\bo}|>u)>0$ for all $u>0$. Observe, taking $I=\{\bo\}$ in (\ref{eq:conv_to_tail}) yields that $\lim_{u\toi}\pr(|X_{\bo}|>uy)/\pr(|X_{\bo}|>u)=\pr(|Y_{\bo}|>y)$ for all except at most countably many $y\in [1,\infty)$. By standard arguments (see \cite[Theorem 1.4.1]{bingham:goldie:teugels:1987} and the discussion before it), this implies that $u\mapsto \pr(|X_{\bo}|>u)$ is a regularly varying function with index $-\alpha$ for some $\alpha>0$, i.e.\
\begin{align*}
\lim_{u\toi}\frac{\pr(|X_{\bo}|>uy)}{\pr(|X_{\bo}|>u)}=y^{-\alpha} \, , \, y>0\, .
\end{align*}  
In particular, $\pr(|Y_{\bo}|>y)=y^{-\alpha}$ for all $y\geq 1$, i.e.\ $|Y_{\bo}|$ is Pareto distributed with index $\alpha$.

{\begin{remark}
For notational convenience, in this paper we only consider $\R$-valued random fields. All the results in this section extend easily to the case of $\R^n$-valued random fields with $n\in\N$ by simply replacing the absolute value $|\cdot|$
with an arbitrary norm $\|\cdot\|$ on $\R^n$.
\end{remark}}

\subsubsection{Existence of the tail field}\label{sub:exist_tail}
A family of indices $\mathcal{I}\subseteq \Z^d$ is said to be \textit{encompassing} if for every finite and nonempty $I\subseteq \Z^d$ there exists at least one $\bi^*\in I$ such that $I-\bi^*\subseteq \mathcal{I}$. Note that necessarily $\bo\in \mathcal{I}$.

If $d=1$, the set of nonnegative (or nonpositive) integers is an example of such family. 
More generally, assume that $\lleq$ is an arbitrary total order on $\Z^d$ which is translation-invariant in the sense that for all $\bi,\bj$ and $\bk$ in $\Z^d$, $\bi \lleq \bj$ implies $\bi + \bk \lleq \bj + \bk$. Then  the set $\Z^d_{\lgeq}=\{\bi \in \Z^d : \bi \lgeq \bo\}$ is clearly encompassing. Indeed, simply set $\bi^*\in I$ to be the (unique) minimal element of the finite set $I$ with respect to $\lleq$. We refer to such orders as \textit{group orders} on $\Z^d$.
 
In particular, the lexicographic order on $\Z^d$, denoted by $\lleq_{l}$, is a group order. Recall, for indices $\bi=(i_1,\dots,i_d),\bj=(j_1,\dots, j_d)\in\Z^d$,  $\bi\lless_{l} \bj$ if $i_k < j_k$ for the first $k$ where $i_k$ and $j_k$ differ, and $\bi\lleq_{l} \bj$ if $\bi\lless_{l} \bj$ or $\bi= \bj$.

The following result extends \cite[Theorem 2.1]{basrak:segers:2009} which treats the case $d=1$; the proof is postponed to \Cref{subs:RVequiv}.

\begin{theorem}\label{thm:tail_process}
For a stationary random field $\bX=(X_{\bi}:\bi\in\Z^d)$ and $\alpha>0$, the following three statements are equivalent:
\begin{enumerate}[(i)]
\item\label{it:tail_process1}
All finite-dimensional distributions of $\bX$ are multivariate regularly varying with index $\alpha$; 
\item\label{it:tail_process3} 
The field $\bX$ has a tail field $\bY=(Y_{\bi} : \bi\in \Z^d)$ with $\pr(|Y_{\bo}|\geq y)=y^{-\alpha}$ for $y\geq 1$.
\item\label{it:tail_process2} There exists an encompassing $\mathcal{I}\subseteq \Z^d$ and a family of random variables $(Y_{\bi}: \bi \in \mathcal{I})$ with $\pr(|Y_{\bo}|\geq y)=y^{-\alpha}$ for $y\geq 1$, such that for all finite and nonempty $I\subseteq \mathcal{I}$, 
\begin{align}\label{eq:conv_to_tail_forward}
u^{-1}\bX_{I}
\,\big| \,|X_{\bo}|>u \dto (Y_{\bi})_{\bi \in I} \, , \; \text{as } u\toi \, .
\end{align}
\end{enumerate}
\end{theorem}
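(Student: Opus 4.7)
The strategy is to prove the cycle (ii) $\Rightarrow$ (iii), (iii) $\Rightarrow$ (i) and (i) $\Rightarrow$ (ii), with the middle implication carrying the bulk of the work. The direction (ii) $\Rightarrow$ (iii) is immediate by choosing $\mathcal{I}=\Z^d$. The direction (i) $\Rightarrow$ (ii) adapts the corresponding one-dimensional argument of Basrak and Segers: for each finite $I\ni\bo$, multivariate regular variation of $\bX_I$ yields weak convergence of $u^{-1}\bX_I$ conditional on $|X_\bo|>u$ to a random vector $\bY_I$, and the resulting family $\{\bY_I\}$ is consistent under projection, so Kolmogorov's extension theorem produces the tail field $\bY$.

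For the substantive direction (iii) $\Rightarrow$ (i), fix a finite $I\subseteq\Z^d$; enlarging if necessary I assume $\bo\in I$. The goal is vague convergence of $\pr(u^{-1}\bX_I\in\cdot)/\pr(|X_\bo|>u)$ on sets bounded away from $\bo$ in $\R^I$. Applying (iii) to the singleton $\{\bo\}\subseteq\mathcal{I}$ gives regular variation of $\pr(|X_\bo|>u)$ with index $-\alpha$, and stationarity then yields tightness of this family of measures on any bounded region. It suffices to identify the vague limit on a $\pi$-system of ``rectangle'' sets $\bigcap_{\bi\in I}\{|z_\bi|>r_\bi\}$ with at least one $r_\bi>0$. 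For such a rectangle I would choose some $\bi\in I$ with $r_\bi>0$ and, via stationarity, rewrite the corresponding probability in terms of the shifted field $\bX_{I-\bi}$, in which $\bo$ now plays the role of the conditioning coordinate. The encompassing property of $\mathcal{I}$ then reduces the calculation to finite subsets of $\mathcal{I}$, where (iii) directly supplies the limit; an inclusion--exclusion argument extends from the $\pi$-system to general bounded Borel sets, and (ii) follows from the resulting (i) by the argument of the previous paragraph.

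The principal obstacle is that (iii) only furnishes vague convergence of $\pr(u^{-1}\bX_J\in\cdot)/\pr(|X_\bo|>u)$ on sets bounded away from $\{z_\bo=0\}$ in $\R^J$, rather than on all sets bounded away from $\bo$. Shifting the anchor coordinate from $\bo$ to an arbitrary $\bj$ therefore requires a genuine anchoring/time-change step, a multidimensional analogue of the Basrak--Segers time-change identity in which the tail field under a shifted anchor is recovered by a weighting of the form $|Y_\bj|^\alpha\ind{Y_\bj\neq 0}$. The decisive enabling observation is that for every $\bj\in\Z^d$ either $\bj$ or $-\bj$ lies in $\mathcal{I}$ (apply encompassing to $\{\bo,\bj\}$), so the joint tail measure of every pair $(X_\bo,X_\bj)$ is always pinned down by (iii) together with stationarity. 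Iterating this bivariate control over the coordinates of $I$, together with tightness and consistency of subsequential conditional limits, yields the full multivariate tail measure and closes the proof.
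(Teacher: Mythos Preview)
Your overall structure is right, and you correctly isolate the real difficulty in (iii) $\Rightarrow$ (i): after shifting the anchor to some $\bi^*$, hypothesis (iii) only controls sets of the form $\{|z_{\bo}|>\epsilon\}\cap(\cdots)$, not all sets bounded away from the origin. But your proposed repair does not close this gap. In your rectangle argument you want to choose $\bi^*$ with $r_{\bi^*}>0$ so that conditioning is meaningful, yet the encompassing property only guarantees \emph{some} $\bi^*$ with $I-\bi^*\subseteq\mathcal{I}$, and there is no reason these two choices coincide. Your fallback---observing that for every $\bj$ one of $\pm\bj$ lies in $\mathcal{I}$, pinning down all bivariate tails, and then ``iterating this bivariate control''---is not an argument: knowing all pairwise tail measures does not determine the multivariate tail measure, and the time-change identity you invoke presupposes the existence of the full tail field, which is precisely what is to be proved.

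The paper's device sidesteps all of this by induction on $|I|$. Pick $\bi^*$ from the encompassing property (so $I-\bi^*\subseteq\mathcal{I}$), and test vague convergence of $\mu_u^I$ against two families of functions: $\mathcal{F}_1=\{f:f=0\text{ on }|x_{\bi^*}|\le\epsilon\}$ and $\mathcal{F}_2=\{f:f\text{ does not depend on }x_{\bi^*}\}$. For $f\in\mathcal{F}_1$ one can condition on $|X_{\bi^*}|>u\epsilon$, shift by stationarity, and apply (iii) directly since $I-\bi^*\subseteq\mathcal{I}$. For $f\in\mathcal{F}_2$ the integral $\mu_u^I(f)$ equals $\mu_u^{I\setminus\{\bi^*\}}(\tilde f)$, which converges by the induction hypothesis. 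Together with tightness and the separating property of $\mathcal{F}_1\cup\mathcal{F}_2$ (this is \cite[Lemma 2.2]{basrak:segers:2009}), this identifies the vague limit uniquely. The key idea you are missing is this dimension-reduction via $\mathcal{F}_2$: rather than trying to re-anchor at a coordinate that is large, one simply drops the encompassing anchor when it is not forced to be large and recurses.
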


Recall that for finite $I\subseteq \Z^d$, $\bX_I$ is multivariate regularly varying with index $\alpha>0$ if for some norm $\|\cdot\|$  on $\R^{|I|}$ there exists a random vector  on $\R^{|I|}$, say $\bTheta^{(I)}$, such that $\|\bTheta^{(I)}\|=1$ and
\begin{align*}
(u^{-1}\|\bX_I\|,\|\bX_I\|^{-1} \bX_I ) \, \big| \, \|\bX_{I}\|> u \dto (Y, \bTheta^{(I)}) 
\, , \; \text{as } u\toi \, ,
\end{align*}
where $Y$ is independent of $\bTheta^{(I)}$ and satisfies $\pr(Y>y)=y^{-\alpha}$ for $y\geq 1$.

The equivalence between \ref{it:tail_process1} and \ref{it:tail_process3} 
explains why fields admitting a tail process will simply be called \textit{regularly varying}. We refer to the corresponding $\alpha$ as the \textit{(tail) index} of the field.

\begin{remark}
While writing the paper, we learned of a parallel study by Wu and
Samorodnitsky~\cite{samorodnitsky:wu:2020}  who also consider regularly varying fields but with the emphasis on the various notions of the "extremal indices" in this context and the application of the theory to the Brown-Resnick random fields.
 They show by an example that for $d\geq 2$ existence of the limit of $u^{-1}\bX_{I} \,\big| \,|X_{\bo}|>u$ for all finite  $I\subseteq \mathcal{I}$ when $\mathcal{I}$ is an orthant in $\Z^d$, is not sufficient for regular variation of $\bX$ and hence existence of the tail field. This made us reconsider an earlier (incorrect) version of \Cref{thm:tail_process} and eventually led to a proper extension of \cite[Theorem 2.1(ii)]{basrak:segers:2009}.
  
\end{remark}

\subsubsection{The spectral tail field}
Consider now the space $\R^{\Z^d}$ equipped with the product topology and the corresponding Borel $\sigma$-algebra. One can then rephrase (\ref{eq:conv_to_tail}) simply  as 
\begin{align*}
u^{-1} \bX \, \big| \, |X_{\bo}|>u \dto \bY \: \; \text{in} \; \R^{\Z^d} \, ,
\end{align*} 
see e.g.~\cite[p.~19]{billingsley:1968}. 
The \textit{spectral tail field} $\bTheta=(\Theta_{\bi}:\bi\in\Z^d)$ of $\bX$ is defined by
$\Theta_{\bi}=Y_{\bi}/|Y_{\bo}|\,, \bi\in \Z^d$. Note that $|\Theta_{\bo}|=1$. Moreover, the spectral field $\bTheta$ is independent of $|Y_0|$ and satisfies 
\begin{align*}
|X_{\bo}|^{-1}\bX \;\big| \;|X_{\bo}|>u \dto \bTheta  \: \; \text{in} \; \R^{\Z^d} \, ,
\end{align*}
see \cite[Proposition 2.2.3]{thesis}.

Even though the tail field is typically not stationary, regular variation and stationarity  of the underlying random field $\bX$ yield specific distributional properties of $\bTheta$ (and hence of $\bY$) summarized by the so-called time-change formula: for every integrable (in the sense that one of the expectations below exists) or nonnegative measurable function $h:\R^{\Z^d}\to \R$ and all $\bj \in\Z^d$,
\begin{equation}\label{eq:time-change0}
\ex\left[h\left((\Theta_{\bi-\bj})_{\bi\in\Z^d}\right)\ind{\Theta_{-\bj}\neq 0}\right]=\ex\left[h\left((\Theta_{\bi}/|\Theta_{\bj}|)_{\bi\in\Z^d}\right)|\Theta_{\bj}|^{\alpha}\ind{\Theta_{\bj}\neq 0}\right] \; .
\end{equation}

In the case of time series, (\ref{eq:time-change0}) appears in \cite{basrak:segers:2009} and the proof is easily extended to the case of random fields, {see \cite[Theorem 3.2]{samorodnitsky:wu:2020}}. Alternatively, one can arrive at (\ref{eq:time-change0}) following the approach of  \cite{planinic:soulier:2018} who use the so-called tail measure of $\bX$ introduced in \cite{owada:samorodnitsky:2012}, see also \cite{dombry:hashorva:soulier:2018}. 


\begin{remark}\label{rem:convToSpectralIsSuff}
Let $\bX$ be a stationary random field and $\alpha>0$. If  $\lim_{u\toi}\pr(|X_{\bo}|>uy)/\pr(|X_{\bo}|>u)\to y^{-\alpha}$ for all $y>0$ 
and for some encompassing $\mathcal{I}\subseteq \Z^d$ there exist random variables $(\Theta_{\bi} : \bi \in \mathcal{I})$ such that for all finite and nonempty $I\subseteq \mathcal{I}$, $|X_{\bo}|^{-1}\bX_{I} \;\big| \;|X_{\bo}|>u \dto (\Theta_{i})_{i\in I}$, then $\bX$ is regularly varying with index $\alpha$; combine the proof of \cite[Corollary 3.2]{basrak:segers:2009} and \Cref{thm:tail_process}.
If $\mathcal{I}\neq \Z^d$, the distribution of the whole spectral process $\bTheta$ is then determined by (\ref{eq:time-change0}) and the tail field of $\bX$ is given by $\bY=Y\bTheta$ where $Y$ is independent of $\bTheta$ and satisfies $\pr(Y\geq y)=y^{-\alpha}$ for $y\geq 1$.
\end{remark}

\subsection{Convergence to a compound Poisson process}\label{sub:convToComp}
Denote by $\cleq$ the componentwise order on $\Z^d$, thus  for $\bi=(i_1,\dots,i_d),\bj=(j_1,\dots, j_d)\in\Z^d$,
$\bi\cleq \bj$ if $i_k\leq j_k$ for all $k=1,\dots, d$.
Take a sequence of positive integers $(r_n)$ such that $\lim_{n\toi}r_n=\lim_{n\toi}n/r_n=\infty$ and let $k_n=\lfloor n / r_{n} \rfloor$. For each $n\in\N$, decompose $\{1,\dots,n\}^d$ into blocks $J_{n,\bi}$, $\bi \in I_n :=\{1,\dots,k_n\}^d$, of size $r_n^d$ by
\begin{equation}\label{eq:blocks_indices}
J_{n, \bi}=(\bj\in \Z^d :(\bi-\bone)\cdot r_n + \bone\cleq \bj \cleq \bi \cdot r_n) \, .
\end{equation}
In this section we apply the Poisson approximation theory from \Cref{sec:ComPoApp} to the point processes based on the (increasing) blocks 
\begin{equation*}
\bX_{n,\bi}:=
\bX_{J_{n,\bi}},
\; \bi \in I_n \, .
\end{equation*}
Following \cite{basrak:planinic:soulier:2018}, we first introduce a suitable space for the $\bX_{n,\bi}$'s; for details see \cite[Subsection 2.3.1]{thesis}.

\subsubsection{A space for blocks - $\lo$}
\label{subsub:lo}
Let $l_0$ be the space of all $\R$-valued arrays on $\Z^d$ converging to zero in all directions, i.e.\ $l_0 = \{(x_{\bi})_{\bi\in \Z^d} : \lim_{|\bi|\to\infty}|x_i|= 0\}$, where  $|\bi|=\max_{k=1,\dots,d}|i_k|$ for $\bi=(i_1,\dots,i_d)\in \Z^d$.  On $l_0$ consider the
uniform norm
\begin{align*}
\|\bx\| = \max_{\bi\in\Z^d} |x_{\bi}| \, , \, \bx=(x_{\bi})_{\bi\in \Z^d} \, ,
\end{align*}
which makes $l_0$ into a separable Banach space. Also, denote by $\bo\in l_0$ the array consisting only of 0's.

Introduce an equivalence relation $\sim$ on $l_0$ by letting $\bx \sim \by $ for $\bx,\by\in l_0$ if for some $\bj\in \Z^d$, $y_{\bi}=x_{\bi+\bj}$ for all $\bi\in \Z^d$. In the sequel, we consider the quotient space $
\lo = l_0/\sim$ of shift-equivalent arrays.
Observe, for $\tbx\in \lo$ and an arbitrary $\bx = (x_{\bi})_{\bi}\in \tbx$, $\tbx=\{(x_{\bi+\bj})_{\bi} : \bj\in \Z^d\}$.
Further, metric $\dtilde : \lo\times\lo\longrightarrow [0,\infty)$ defined by
\begin{align}\label{eq:dtilde}
\dtilde(\tilde{\bx},\tilde{\by}) 
=\inf\{\|\bx-\by\|:\bx\in\tilde{\bx},\by\in\tilde{\by}\} 
\, , \, \tilde{\bx},\tilde{\by}\in \lo \, ,
\end{align}
makes $\lo$ a separable and complete metric space. Note that 
for $\tilde{\bx}, \tilde{\bx}_1 , \tilde{\bx}_2,\dots \in \lo$,   $\dtilde(\tilde{\bx}_n, \tilde{\bx})\to 0$ as $n\toi$ if and only if for some, and then for every, $\bx \in \tilde{\bx}$ there exists $\bx_n \in \tilde{\bx}_n$, $n\in \N$, such that $\|\bx_n - \bx\| \to 0$.


In what follows, on $l_0$ and $\lo$  consider their respective Borel $\sigma$-algebras $\mathcal{B}(l_0)$ and $\mathcal{B}(\lo)$. Call a function $h$ on $l_0$ \textit{shift-invariant} if $h((x_{\bi + \bj})_{\bi})=h((x_{\bi})_{\bi})$ for all $(x_{\bi})_{\bi}\in l_0, \bj\in \Z^d$. Note that $\mathcal{B}(l_0)$ coincides with the trace $\sigma$-algebra of $l_0$ in $\R^{\Z^{d}}$ considered with respect to its cylindrical $\sigma$-algebra, and a function $\tilde{h}$ on $\lo$ is measurable if and only if the function $\bx \mapsto \tilde{h}(\tbx)$ is a (shift-invariant) measurable function on $l_0$.


\subsubsection{The point process of blocks}

Consider now the space $\loo:=\lo \setminus\{\bo\}$ with a Borel subset $B\subseteq \loo$ being bounded if for some $\epsilon>0$, $\|\bx\|>\epsilon$ for all $\bx\in B$. In other words, bounded sets are those which are bounded away from $\bo$ w.r.t.\ the metric~$\dtilde$ defined in (\ref{eq:dtilde}).

We will consider the finite block $\bX_{n,\bi}$ as an element of $\lo$ by simply adding infinitely many zeros around $\bX_{n,\bi}$ and then mapping the resulting element of $l_0$ into its equivalence class in $\lo$.
\begin{remark}\label{rem:why_lo}
One can regard blocks as elements of the simpler space $l_0$ but since we are interested in clusters of high-threshold exceedances in these blocks one would also need to specify a reference exceedance (an \textit{anchor}, see \Cref{subs:anchor} below) around which the block is centered, that is, which exceedance is put at position $\bo$. This introduces additional technical difficulties. For example, one natural choice for the anchor is the first  maximum of the block (e.g.\ w.r.t.\ the lexicographic order on $\Z^d$). In this case one encounters continuity issues since it is possible that the limiting cluster can with positive probability have two exceedances of the exactly same magnitude  (e.g.\ take a moving average process from \Cref{exa:moving_averages} below which has at least two identical non-zero coefficients). Consequently, to deduce the limiting behavior of the extremal clusters with this choice of an anchor one would need to exclude such cases by e.g.\ imposing a suitable condition on the tail process.

Another choice for the anchor could be the first exceedance over a (high) threshold but this (i) is dependent on the choice of the threshold, and (ii) in this case one does not have the nice polar decomposition of the limiting cluster, see \Cref{lem:spectral_Z} and \Cref{rem:shift_invariance} below. 

 On the other hand, the use of $\lo$ is immune to these issues and allows one to develop a general point process convergence theory while keeping all the relevant information about the structure within the extremal clusters, see \cite{basrak:planinic:2019} for an application of the theory to the study of sums and records times of regularly varying time series.
\end{remark}

Define the point process of blocks
\begin{align*}
N_n' = \sum_{\bi\in I_n} \delta_{(\bi/k_n,\bX_{n,\bi}/a_n)} \, , \, n\in\N \, ,
\end{align*}
in $\mathcal{M}_p([0,1]^d \times \loo)$,
where the sequence $(a_n)$ is chosen such that 
\begin{equation*}
\lim_{n\toi} n^d\pr(|X_{\bo}|>a_n)= 1 \, .
\end{equation*}
To obtain the convergence of $N_n'$ we will apply \Cref{cor:poissonapprox_stationarycase}.

For each $n\in \N$, denote  $J_{r_n}:=\{1,\dots,r_n\}^d=J_{n,\bone}$ and let $\bX_{r_n}:=\bX_{J_{r_n}}$ represent the common distribution of the blocks $\bX_{n,\bi}$, $\bi \in I_n$. Under this notation, the condition  \eqref{eq:independentblocksconvergence_idcase} reduces to the existence of a measure $\nu$ in $\mathcal{M}(\loo)$ satisfying
\begin{align}\label{eq:v_ntov}
k_n^d \pr(a_n^{-1}\bX_{r_n}\in \cdot)\vto \nu \, , \, \text{as } n\toi \, .
\end{align}

Property (\ref{eq:AC}) below provides one sufficient condition for this convergence to  hold. It appears in the time series literature under the name finite mean cluster size condition or the anticlustering condition. 

\begin{hypothesis}\label{hypo:AC}
There exists a sequence of positive integers $(r_n)_{n}$ satisfying $r_n\toi$, $r_n/n\to 0$, and for every $u>0$,
\begin{align}\label{eq:AC}
\lim_{m\toi}\limsup_{n\toi}\pr\left(\max_{m< |\bi|\leq r_n}|X_{\bi}|>a_nu\; \Big| \; |X_{\bo}|>a_nu\right)=0 \, .
\end{align}
\end{hypothesis}

As shown in \Cref{prop:intensity_convergence} below, for a sequence $(r_n)$ satisfying (\ref{eq:AC}),
the convergence in \eqref{eq:v_ntov} holds 
with the limiting measure $\nu$ of the form 
 \begin{equation*}
 \nu(\, \cdot \, )=\vartheta\int_0^\infty \pr(y\tilde{\bQ} \in \cdot) \alpha y^{-\alpha - 1} dy    \, , 
\end{equation*}
for some $\vartheta\in (0,1]$ and $\tilde{\bQ}$ being a random element in $\lo$ satisfying $\|\tilde{\bQ}\|=1$ almost surely. In the following we first describe $\vartheta$ and $\tilde{\bQ}$ in terms of the tail field of $\bX$ using the concept of anchoring. 

\subsubsection{Anchoring the tail process}\label{subs:anchor}
From now on we will restrict our attention to tail fields $\bY=(Y_{\bi})_{\bi \in \Z^d}$ which satisfy 
\begin{align*}
\pr(\lim_{|\bi|\toi}|Y_{\bi}|=0)= \pr(\bY\in l_0)=1 \, .
\end{align*}
For example, this is true whenever the underlying random field $\bX=(X_{\bi})_{\bi \in \Z^d}$ satisfies Assumption \ref{hypo:AC} 
(cf.~\cite[Proposition 4.2]{basrak:segers:2009}).


We say that a measurable function $A:\{\bx \in l_0 : \|\bx\|>1 \} \to \Z^d$ is an \textit{anchoring function} if 
\begin{enumerate}[(i)]
\item $A((x_{\bi})_{\bi \in \Z^d})=\bj$ for some $\bj \in \Z^d$ implies that $|x_{\bj}|>1$;
\item For each $\bj\in \Z^d$, $A((x_{\bi+\bj})_{\bi})=A((x_{\bi})_{\bi })-\bj$.
\end{enumerate}

In words, $A$ picks one of the finitely many $x_{\bi}$'s which are larger than one in absolute value in a way which is translation covariant. Observe, for an arbitrary group order on $\Z^d$, the following are an examples of an anchoring function.
\begin{itemize}
\renewcommand\labelitemi{--}
\item \textit{first exceedance}: $A^{fe}((x_{\bi})_{\bi})=\min \{\bj\in \Z^d : |x_{\bj}|>1\}$,
\item \textit{last exceedance}: $A^{le}((x_{\bi})_{\bi})=\max \{\bj\in \Z^d : |x_{\bj}|>1\}$,
\item \textit{first maximum}: $A^{fm}((x_{\bi})_{\bi})=\min \{\bj\in \Z^d : |x_{\bj}|=\|(x_{\bi})_{\bi}\|\}$.
\end{itemize}

We will exploit the following property of the tail field which is implied solely by the stationarity of $\bX$, and can be seen as a special case of the time-change formula.

\begin{lemma}
For every bounded measurable function $h:\R^{\Z^d}\to \R$ and all $\bj\in \Z^d$,
\begin{equation}\label{eq:time-change1}
\ex\left[h\left((Y_{\bi})_{\bi\in\Z^d}\right)\ind{|Y_{\bj}|>1}\right]=\ex\left[h\left((Y_{\bi - \bj})_{\bi\in\Z^d}\right)\ind{|Y_{-\bj}|>1}\right] \, .
\end{equation}
\end{lemma}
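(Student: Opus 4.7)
The plan is to derive the identity from the exact finite-$u$ relation given by stationarity, then pass to the limit $u \to \infty$ using only the defining convergence $u^{-1}\bX \mid |X_{\bo}|>u \dto \bY$. Concretely, for any bounded measurable $G:\R\to\R$ and any bounded measurable $h:\R^{\Z^d}\to\R$, stationarity of $\bX$ (applied as a relabeling of indices by $-\bj$) gives
\begin{equation*}
\ex\!\left[h(u^{-1}\bX)\,G(X_{\bj}/u)\,\ind{|X_{\bo}|>u}\right] \;=\; \ex\!\left[h\bigl((u^{-1}X_{\bi-\bj})_{\bi}\bigr)\,G(X_{\bo}/u)\,\ind{|X_{-\bj}|>u}\right] \, .
\end{equation*}
Dividing by $\pr(|X_{\bo}|>u)$ turns this into an equality of two conditional expectations given $|X_{\bo}|>u$, where on the right the indicator $\ind{|X_{-\bj}|>u}$ is simply one of the bounded measurable observables and not part of the conditioning.

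The plan is then to specialize this to bounded continuous cylinder functions $h(\bx)=h_I(\bx_I)$ with $\{\bo,\bj,-\bj\}\subseteq I$ and to $G=g_\epsilon$, where $g_\epsilon:\R\to[0,1]$ is continuous, equals $0$ on $\{|y|\leq 1\}$ and $1$ on $\{|y|\geq 1+\epsilon\}$. Since both sides are then bounded continuous functions in the product topology on $\R^{\Z^d}$, the portmanteau theorem applied to \eqref{eq:conv_to_tail_RZd} yields
\begin{equation*}
\ex\!\left[h(\bY)\,g_\epsilon(Y_{\bj})\right] \;=\; \ex\!\left[h\bigl((Y_{\bi-\bj})_{\bi}\bigr)\,g_\epsilon(Y_{-\bj})\right] \, .
\end{equation*}
The key observation is that $g_\epsilon(y)\to \ind{|y|>1}$ pointwise as $\epsilon\to 0$ on the whole line, including $|y|=1$ where both sides equal $0$. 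Bounded convergence on both sides then delivers \eqref{eq:time-change1} for all bounded continuous cylinder $h$. Finally, the class of bounded measurable $h$ for which \eqref{eq:time-change1} holds is a monotone vector space containing the bounded continuous cylinder functions, so the functional monotone class theorem extends the identity to arbitrary bounded measurable $h:\R^{\Z^d}\to\R$.

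The main technical point to watch is the discontinuity of $y\mapsto \ind{|y|>1}$ at $|y|=1$: a naive application of portmanteau with the indicator itself would require that neither $|Y_{\bj}|$ nor $|Y_{-\bj}|$ places mass at $1$, which is not given by hypothesis. Using the one-sided continuous approximation $g_\epsilon$ that vanishes on the whole closed set $\{|y|\leq 1\}$ avoids this issue cleanly: the sandwich is applied to the same cut-off on both sides simultaneously, so whatever boundary contribution could arise cancels in the limit and one recovers precisely the open indicator $\ind{|y|>1}$.
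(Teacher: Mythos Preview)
There is a genuine gap. After the stationarity shift, your finite-$u$ identity reads
\[
\ex\!\left[h(u^{-1}\bX)\,g_\epsilon(X_{\bj}/u)\,\ind{|X_{\bo}|>u}\right]
=\ex\!\left[h\bigl((u^{-1}X_{\bi-\bj})_{\bi}\bigr)\,g_\epsilon(X_{\bo}/u)\,\ind{|X_{-\bj}|>u}\right].
\]
Dividing by $\pr(|X_{\bo}|>u)$ and using $g_\epsilon(X_{\bo}/u)=g_\epsilon(X_{\bo}/u)\ind{|X_{\bo}|>u}$, the right-hand side becomes the conditional expectation, given $|X_{\bo}|>u$, of
\[
\bx\;\longmapsto\; h\bigl((x_{\bi-\bj})_{\bi}\bigr)\,g_\epsilon(x_{\bo})\,\ind{|x_{-\bj}|>1}.
\]
This integrand is \emph{not} continuous: the indicator $\ind{|x_{-\bj}|>1}$ survives untouched by your $g_\epsilon$, which after the shift sits on the coordinate $x_{\bo}$, not on $x_{-\bj}$. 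Hence your assertion that ``both sides are then bounded continuous functions'' fails on the right, and portmanteau does not directly apply. Moreover, even if the limit existed, it would be $\ex[h((Y_{\bi-\bj})_{\bi})\,g_\epsilon(Y_{\bo})\,\ind{|Y_{-\bj}|>1}]$, not the $\ex[h((Y_{\bi-\bj})_{\bi})\,g_\epsilon(Y_{-\bj})]$ you claim; that latter identity is in general false (already for $h\equiv 1$ it would force $\ex[g_\epsilon(Y_{\bj})]=\ex[g_\epsilon(Y_{-\bj})]$, which does not follow from the lemma).

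The clean fix is exactly the fact you tried to avoid: $\pr(|Y_{\bj}|=1)=0$ for every $\bj$. This is available in the paper's setting because $|Y_{\bj}|=|Y_{\bo}|\,|\Theta_{\bj}|$ with $|Y_{\bo}|$ Pareto (hence atomless) and independent of $\bTheta$. With this in hand, one may use the indicator $\ind{|\cdot|>1}$ directly (as the paper does) or, if you prefer the mollifier route, apply $g_\epsilon$ symmetrically to both coordinates before the shift so that neither side carries a raw indicator; then $g_\epsilon(Y_{\bo})\to\ind{|Y_{\bo}|>1}=1$ a.s.\ handles the extra factor. Either way, the argument ultimately rests on the atomlessness of $|Y_{\bo}|$.
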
 
 
\begin{proof}
Assume in addition that $h$ is continuous with respect to the product topology on $\R^{\Z^d}$. Then, since $\pr(|Y_{\bj}|=1)=\pr(|Y_{\bo}|\cdot |\Theta_{\bj}|=1)=0$ for all $\bj\in \Z^d$, the definition of the tail process and stationarity of $(X_{\bi})$ imply
\begin{align*}
\ex\left[h\left((Y_{\bi})_{\bi}\right)\ind{|Y_{\bj}|>1}\right]&=\lim_{u\toi} \ex\left[h\left((u^{-1}X_{\bi})_{\bi}\right)\ind{|X_{\bj}|>u} \mid |X_{\bo}|>u\right] \\
&=\lim_{u\toi} \frac{\ex\left[h\left((u^{-1}X_{\bi})_{\bi}\right)\ind{|X_{\bj}|>u, |X_{\bo}|>u}\right]}{\pr(|X_{\bo}|>u)} \\
&=\lim_{u\toi} \frac{\ex\left[h\left((u^{-1}X_{\bi-\bj})_{\bi}\right)\ind{|X_{\bo}|>u, |X_{-\bj}|>u}\right]}{\pr(|X_{\bo}|>u)} \\
&=\ex\left[h\left((Y_{\bi - \bj})_{\bi}\right)\ind{|Y_{-\bj}|>1}\right] \, .
\end{align*}
Since finite Borel measures on a metric space are determined by integrals of continuous and bounded functions, this yields (\ref{eq:time-change1}). 
\end{proof} 
\begin{remark}
Using the already mentioned tail measure of $\bX$, one can give a one-line proof of the previous result, see~\cite[Lemma 2.2]{planinic:soulier:2018}.
\end{remark}
 
\begin{lemma}
Assume that $\pr(\bY\in l_0)=1$. Then for every anchoring function $A$
\begin{align*}
\pr(A(\bY)=\bo)>0 \, .
\end{align*}
\end{lemma}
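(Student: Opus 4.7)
The plan is to exploit the time-change formula \eqref{eq:time-change1} together with the translation-equivariance property (ii) of the anchoring function $A$, reducing the claim to an averaging argument.

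First, I would observe that since $\bY \in l_0$ almost surely and $|Y_{\bo}| > 1$ almost surely (because $|Y_{\bo}|$ is Pareto with index $\alpha$), the function $A$ is defined on $\bY$ with full probability, and $A(\bY)$ takes values in $\Z^d$. Hence
\[
\sum_{\bj \in \Z^d} \pr(A(\bY)=\bj) = 1,
\]
so there exists at least one $\bj_0\in \Z^d$ with $p_{\bj_0}:=\pr(A(\bY)=\bj_0)>0$. The goal is then to show that $\pr(A(\bY)=\bo) \geq p_{\bj_0}$.

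Next, I would apply \eqref{eq:time-change1} to the bounded measurable function $h(\bx)=\ind{A(\bx)=\bj_0}$ (setting $h=0$ outside the domain of $A$), with $\bj=\bj_0$. The crucial point is that by property (i) of anchoring functions, $\{A(\bY)=\bj_0\}\subseteq\{|Y_{\bj_0}|>1\}$, so the extra indicator $\ind{|Y_{\bj_0}|>1}$ appearing on the left-hand side of \eqref{eq:time-change1} is automatic and
\[
p_{\bj_0}=\ex\!\left[\ind{A(\bY)=\bj_0}\ind{|Y_{\bj_0}|>1}\right]=\ex\!\left[\ind{A((Y_{\bi-\bj_0})_{\bi})=\bj_0}\ind{|Y_{-\bj_0}|>1}\right].
\]
Now property (ii) applied with the shift $-\bj_0$ gives $A((Y_{\bi-\bj_0})_{\bi})=A(\bY)+\bj_0$, so the event $\{A((Y_{\bi-\bj_0})_{\bi})=\bj_0\}$ equals $\{A(\bY)=\bo\}$. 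Dropping the indicator $\ind{|Y_{-\bj_0}|>1}$ (which is at most $1$) then yields
\[
p_{\bj_0}\leq \pr(A(\bY)=\bo),
\]
and the positivity of $p_{\bj_0}$ finishes the proof.

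The only subtle step is the bookkeeping for the translation identity: one must carefully verify that property (ii) implies $A((x_{\bi-\bj_0})_{\bi})=A((x_{\bi})_{\bi})+\bj_0$ (take $\bj=-\bj_0$ in the defining relation), and that the indicator $\ind{|Y_{\bj_0}|>1}$ on the left side of \eqref{eq:time-change1} is indeed redundant once $A(\bY)=\bj_0$ is enforced. Everything else is immediate from the time-change formula, which is available without further work.
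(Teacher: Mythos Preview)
Your proof is correct and uses essentially the same approach as the paper: both apply the time-change formula \eqref{eq:time-change1} together with the translation-equivariance property (ii) to turn the event $\{A(\bY)=\bj\}$ into $\{A(\bY)=\bo\}$. The only difference is organizational---the paper argues by contradiction after summing over all $\bj\in\Z^d$, whereas you pick a single $\bj_0$ with positive mass and obtain the direct inequality $\pr(A(\bY)=\bo)\geq \pr(A(\bY)=\bj_0)$, which is slightly more informative but relies on the identical key step.
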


\begin{proof}
Assume that $\pr(A(\bY)=\bo)=0$. Applying \eqref{eq:time-change1} yields
\begin{align*}
1&=\sum_{\bj \in \Z^d} \pr(A(\bY)=\bj) = \sum_{\bj \in \Z^d } \pr(A(\bY)=\bj, |Y_{\bj}|>1) \\
&= \sum_{\bj \in \Z^d } \pr(A((Y_{\bi - \bj})_{\bi})=\bj, |Y_{-\bj}|>1)=\sum_{\bj \in \Z^d }  \pr(A(\bY)=\bo , |Y_{-\bj}|>1) =0 \; . \\ 
\end{align*}
Hence, $\pr(A(\bY)=\bo)>0$.
\end{proof}

If $\pr(\bY\in l_0)=1$, for any anchoring function $A$ we define the \textit{anchored  tail process}  of $\bY$ (with respect to $A$) as any random element of $l_0$, denoted by  $\bZ^A=(Z^A_{\bi}: \bi \in \Z^d)$, which satisfies 
\begin{align*}
\bZ^A \eind  \bY \;\big| \; A(\bY)=0 \, .
\end{align*}
Also, define $\bQ^A=(Q^A_{\bi}: \bi \in \Z^d)$ by 
$\bQ^A = Z^A / \|\bZ^A\|$ and call it the \textit{anchored spectral  tail process}  (with respect to $A$).


\begin{lemma}
Assume that $\pr(\bY\in l_0)=1$ and let $A,A'$ be two anchoring functions. Then
\begin{align*}
\pr(A(\bY)=\bo)=\pr(A'(\bY)=\bo) 
\end{align*}
and
\begin{align*}
\bZ^{A} \eind \bZ^{A'} \,  \, \text{in } \lo \, .
\end{align*}
\end{lemma}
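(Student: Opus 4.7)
The plan is to derive a single master identity for the distribution of $\bY$ conditioned on $\{A(\bY)=\bo\}$ which turns out to be independent of the choice of anchoring function, and to deduce both statements from it.

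First, I would observe that by the defining property~(i) of an anchoring function, $\{A(\bY)=\bj\}\subseteq\{|Y_{\bj}|>1\}$, while property~(ii) together with the substitution $\bk=-\bj$ gives $A((y_{\bi-\bj})_{\bi})=A((y_{\bi})_{\bi})+\bj$, so that
\[
\{A((Y_{\bi-\bj})_{\bi})=\bj\}=\{A(\bY)=\bo\}.
\]
Next, apply the time-change formula~\eqref{eq:time-change1} to the bounded measurable map $h(\bx)=\tilde h(\bx)\ind{A(\bx)=\bj}$, where $\tilde h:l_0\to\R$ is any bounded shift-invariant measurable function. Since shift-invariance of $\tilde h$ makes $\tilde h((Y_{\bi-\bj})_{\bi})=\tilde h(\bY)$, the two observations above yield
\[
\ex\left[\tilde h(\bY)\ind{A(\bY)=\bj}\right]
=\ex\left[\tilde h(\bY)\ind{A(\bY)=\bo}\ind{|Y_{-\bj}|>1}\right].
\]
Summing over $\bj\in\Z^d$ and using that $|Y_{\bo}|>1$ a.s.\ together with the hypothesis $\pr(\bY\in l_0)=1$ guarantees that $A(\bY)$ is a.s.\ defined and that the random variable $N:=\#\{\bj\in\Z^d:|Y_{\bj}|>1\}$ is a.s.\ finite and at least $1$, I obtain the master identity
\[
\ex[\tilde h(\bY)]=\ex\left[\tilde h(\bY)\,N\,\ind{A(\bY)=\bo}\right].
\]

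Since $N$ is itself shift-invariant and $N\geq 1$ a.s., replacing $\tilde h$ by $\tilde h/N$ (still shift-invariant and bounded) yields
\[
\ex\left[\tilde h(\bY)\ind{A(\bY)=\bo}\right]=\ex\left[\tilde h(\bY)/N\right]
\]
for every bounded shift-invariant measurable $\tilde h$. The right-hand side does not depend on $A$. Taking $\tilde h\equiv 1$ immediately gives $\pr(A(\bY)=\bo)=\ex[1/N]$, proving the first equality with the same value for $A$ and $A'$. For the distributional claim, note that
\[
\ex\left[\tilde h(\bZ^A)\right]=\frac{\ex[\tilde h(\bY)\ind{A(\bY)=\bo}]}{\pr(A(\bY)=\bo)}=\frac{\ex[\tilde h(\bY)/N]}{\ex[1/N]},
\]
again independent of $A$.

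Finally, since the Borel $\sigma$-algebra on $\lo$ corresponds precisely to shift-invariant Borel sets in $l_0$ (as recalled just before Section~3.2.2 of the paper), the equality of $\ex[\tilde h(\bZ^A)]$ and $\ex[\tilde h(\bZ^{A'})]$ over all bounded shift-invariant measurable $\tilde h$ is enough to conclude $\bZ^A\eind\bZ^{A'}$ in $\lo$. The only real subtlety is to recognise that restricting attention to shift-invariant test functions is exactly what describes the distribution on the quotient space, and that the finiteness $N<\infty$ a.s.\ (hence $1/N$ bounded) justifies the division that removes the dependence on $A$.
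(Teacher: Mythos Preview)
Your argument is correct. Both your proof and the paper's rest on the time-change identity~\eqref{eq:time-change1} combined with shift-invariance, but they organize the computation differently. The paper compares two anchors directly: it writes
\[
\ex[h(\bY)\ind{A(\bY)=\bo}]=\sum_{\bj}\ex[h(\bY)\ind{A(\bY)=\bo,\,A'(\bY)=\bj,\,|Y_{\bj}|>1}]
\]
and then applies~\eqref{eq:time-change1} to swap the roles of $A$ and $A'$, arriving at $\ex[h(\bY)\ind{A'(\bY)=\bo}]$ in one line. You instead derive an anchor-free formula $\ex[\tilde h(\bY)\ind{A(\bY)=\bo}]=\ex[\tilde h(\bY)/N]$, from which independence of the choice of $A$ is immediate. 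Your route is slightly less direct for the lemma itself (it requires the division-by-$N$ step, with the small technicality that $N$ must be redefined as $\max(N,1)$ off the support of $\bY$ to keep the test function globally bounded), but it buys more: your master identity is precisely the Palm relation the paper states separately as Proposition~\ref{prop:Palm1}, and as a by-product you obtain the explicit expression $\vartheta=\ex[1/N(\bY)]$, complementing the paper's $\vartheta=1/\ex[N(\bZ)]$ from Remark~\ref{rem:thetaEquals1overExpClusterSize}.
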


\begin{proof}
Let $h:l_0 \to [0,\infty)$ be an arbitrary measurable and shift-invariant function. Using \eqref{eq:time-change1} and shift-invariance of $h$ we obtain
\begin{align*}
\ex[h(\bY) \ind{A(\bY)=\bo}] &= \sum_{\bj \in \Z^d} \ex[h(\bY) \ind{A(\bY)=\bo, A'(\bY)=\bj, |Y_{\bj}|> 1}] \\
&= \sum_{\bj \in \Z^d} \ex[h(\bY) \ind{A(\bY)=-\bj ,  A'(\bY)=\bo}] \\
& = \ex[h(\bY) \ind{A'(\bY)=\bo}] \; .
\end{align*}
Taking $h \equiv 1$ yields the first statement, and then the second one follows immediately by the construction of the space $\lo$.
\end{proof}


If $\pr(\bY\in l_0)=1$, denote by $\vartheta$ the common value of $\pr(A(\bY)=\bo)$, i.e.\ for an arbitrary anchoring function $A$ set
\begin{align}\label{eq:extremal_index}
\vartheta=\pr(A(\bY)=\bo) \, .
\end{align}
In particular, for any group order $\lleq$ on $\Z^d$, using the first/last exceedance as an anchor yields, 
\begin{equation*}
\vartheta=\pr(\sup_{\bj \lless \bo} |Y_{\bj}|\leq 1) = \pr(\sup_{\bj \lgreater \bo} |Y_{\bj}|\leq 1) \, 
\end{equation*}
since $\pr(|Y_{\bo}|>1)=1$.
Also, 
\begin{equation}\label{eq:extremal_index_maximum}
\vartheta=\pr(A^{fm}(\bY)=0)= \pr(A^{fm}(\bTheta)=0) \, .
\end{equation}
Observe here that the function $A^{fm}$ remains  well defined on the whole set $l_0$ without $ \bo $. Under suitable dependence conditions, $\vartheta$ turns out to be the extremal index of the field $(|X_{\bj}|)_{\bj}$, see \Cref{rem:extremal_index} below (cf.~also \Cref{rem:typical_cluster}).

Furthermore, second part of the previous result  
shows that the distribution of the anchored tail process, when viewed as an element in $\lo$, does not depend on the anchoring function. Hence, there exists a random element in $\lo$, denoted by $\tilde{\bZ}$, which satisfies
\begin{align*}
\tilde{\bZ} \eind \bZ^A  \quad \text{in} \; \: \lo 
\end{align*}
for all anchoring functions $A$; simply take your favorite anchoring function $A$ and let $\tilde{\bZ}$ be the equivalence class of $\bZ^A$ in $\lo$. Moreover, let $\tilde{\bQ}= \tilde{\bZ}/\|\tilde{\bZ}\|$, so in particular $\tilde{\bQ} \eind \bQ^{A}$ in $\lo$ for any anchor $A$. We will also refer to $\tilde{\bZ}$ and $\tilde{\bQ}$ as the \textit{anchored tail process} and the \textit{anchored spectral tail process}, respectively.


Under an appropriate assumption, the distribution of the anchored tail process $\tilde{\bZ}$ represents the distribution of the asymptotic cluster of exceedances of the underlying field $\bX$ and one can think of it as a "typical" cluster of exceedances; see \Cref{rem:typical_cluster} below. On the other hand, due to the conditioning, the distribution of the tail process $\bY$ exhibits bias towards clusters with more exceedances. This Palm-like relationship between the typical cluster and the tail process is made formal in the following result and has links with the recent work of Sigman and Whitt~\cite{sigman:whitt:2019} who studied Palm distributions of marked point processes on $\Z$.

A random element $\boldsymbol{R}=(R_{\bi})_{\bi \in \Z^d}$ in $l_0$ is called a \textit{representative} of a random element $\tilde{\boldsymbol{R}}$ in $\lo$ if  $\boldsymbol{R} \eind \tilde{\boldsymbol{R}}$ in $\lo$. 
 In particular, for any anchoring function $A$, $\bZ^A$ and $\bQ^A$ become representatives of $\tilde{\bZ}$ and $\tilde{\bQ}$, respectively.

{\begin{proposition}\label{prop:Palm1}
Assume that $\pr(\bY\in l_0)=1$ and let $\bZ=(Z_{\bi})_{\bi\in \Z^d}$ be any representative of $\tilde{\bZ}$. Then for every measurable and shift-invariant function $h:l_0 \to [0,\infty)$,
\begin{align}\label{eq:palm_basic}
\ex[h(\bY)]= \vartheta \ex\left[h(\bZ) \cdot \sum_{\bk \in \Z^d}\ind{|Z_{\bk}|>1}\right] \, .
\end{align}
\end{proposition}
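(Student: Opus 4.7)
The plan is to reduce the identity to the time-change formula \eqref{eq:time-change1} applied to a suitably modified integrand involving a chosen anchor, and then to sum the resulting identity over $\Z^d$.

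Fix an arbitrary anchoring function $A$, and take as representative $\bZ = \bZ^A$, so that $\bZ^A \eind \bY \mid A(\bY)=\bo$. Since both $h$ (by assumption) and $N(\bx) := \sum_{\bk\in\Z^d}\ind{|x_{\bk}|>1}$ are shift-invariant measurable maps $l_0 \to [0,\infty)$, the product $h(\bx)N(\bx)$ descends to a measurable function on $\lo$. Hence $\ex[h(\bZ)N(\bZ)]$ does not depend on the choice of representative of $\tilde\bZ$, and the right hand side of \eqref{eq:palm_basic} equals
\begin{equation*}
\vartheta\,\ex[h(\bZ^A)N(\bZ^A)]
=\ex\!\left[h(\bY)\,N(\bY)\,\ind{A(\bY)=\bo}\right]
\end{equation*}
by the definition of $\vartheta = \pr(A(\bY)=\bo)$. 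Thus the proposition reduces to showing
\begin{equation}\label{eq:palm_reduction}
\ex[h(\bY)] \;=\; \sum_{\bj\in\Z^d}\ex\!\left[h(\bY)\,\ind{A(\bY)=\bo,\,|Y_{\bj}|>1}\right].
\end{equation}

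The next step is to apply the time-change formula \eqref{eq:time-change1} to the shift-invariant function $\tilde h(\bx) := h(\bx)\ind{A(\bx)=\bo}$ (extended by $0$ outside $\{\|\bx\|>1\}$), with index $\bj$ replaced by $-\bj$. This yields, for each $\bj\in\Z^d$,
\begin{equation*}
\ex\!\left[h(\bY)\ind{A(\bY)=\bo,\,|Y_{\bj}|>1}\right]
=\ex\!\left[h\bigl((Y_{\bi-(-\bj)})_\bi\bigr)\,\ind{A((Y_{\bi-(-\bj)})_\bi)=\bo,\,|Y_{-(-\bj)}|>1}\right].
\end{equation*}
Now shift-invariance of $h$ collapses $h((Y_{\bi+\bj})_\bi)$ back to $h(\bY)$, while property (ii) of anchoring functions gives $A((Y_{\bi+\bj})_\bi) = A(\bY) - \bj$. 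Using property (i), the condition $A(\bY)=\bj$ already forces $|Y_{\bj}|>1$, so the surviving indicator is just $\ind{A(\bY)=\bj}$. Therefore
\begin{equation*}
\ex\!\left[h(\bY)\,\ind{A(\bY)=\bo,\,|Y_{\bj}|>1}\right]
=\ex\!\left[h(\bY)\,\ind{A(\bY)=\bj}\right].
\end{equation*}

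Finally, summing over $\bj\in\Z^d$ and using that $A(\bY)$ is a.s.\ well-defined and takes exactly one value in $\Z^d$ on the set $\{\bY\in l_0\}$ (which has probability one under the assumption $\pr(\bY\in l_0)=1$, since $|Y_{\bo}|\geq 1$ a.s.\ implies $\|\bY\|>1$ a.s.), one gets $\sum_{\bj}\ind{A(\bY)=\bj}=1$ a.s., and the right hand side of \eqref{eq:palm_reduction} collapses to $\ex[h(\bY)]$, completing the proof.

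The main technical care is in bookkeeping: checking that the shift in $A((Y_{\bi+\bj})_\bi)=A(\bY)-\bj$ matches the index shift in the time-change formula, and justifying that the summation in \eqref{eq:palm_reduction} is legitimate (Fubini/Tonelli for nonnegative integrands). Extending \eqref{eq:time-change1} to the shift-invariant indicator $\ind{A(\bx)=\bo}$, which is only measurable (not continuous), is immediate from a monotone-class argument since \eqref{eq:time-change1} is stated for all bounded measurable $h$.
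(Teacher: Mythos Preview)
Your approach is essentially identical to the paper's: fix an anchor $A$, rewrite the right-hand side as $\sum_{\bj}\ex[h(\bY)\ind{A(\bY)=\bo,\,|Y_{\bj}|>1}]$, apply the time-change formula \eqref{eq:time-change1} term by term together with shift-invariance of $h$ and the anchoring properties, and collapse the sum using $\sum_{\bj}\ind{A(\bY)=\bj}=1$ a.s. The paper does exactly this, in four lines.

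There is, however, a sign slip in your displayed application of \eqref{eq:time-change1}. Applying \eqref{eq:time-change1} with index $\bj$ (not $-\bj$) to $\tilde h$ gives
\[
\ex\!\left[\tilde h(\bY)\,\ind{|Y_{\bj}|>1}\right]
=\ex\!\left[\tilde h\bigl((Y_{\bi-\bj})_{\bi}\bigr)\,\ind{|Y_{-\bj}|>1}\right],
\]
and then $A((Y_{\bi-\bj})_{\bi})=A(\bY)+\bj$, so the event $\{A((Y_{\bi-\bj})_{\bi})=\bo\}$ becomes $\{A(\bY)=-\bj\}$, not $\{A(\bY)=\bj\}$. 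Your displayed equation (with $|Y_{\bj}|>1$ on \emph{both} sides) is not what \eqref{eq:time-change1} says. The error is harmless in the end because you sum over all of $\Z^d$, and indeed the paper obtains $\sum_{\bk}\ex[h(\bY)\ind{A(\bY)=-\bk}]$ at the corresponding step. Since you explicitly flagged this bookkeeping as the delicate point, it is worth getting the direction right.
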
}
\begin{remark}\label{rem:thetaEquals1overExpClusterSize}
Taking $h\equiv 1$ in (\ref{eq:palm_basic}) yields
\begin{align*}
\vartheta = \frac{1}{\ex\left[\sum_{\bk \in \Z^d}\ind{|Z_{\bk}|>1}\right]} \, ,
\end{align*}
and since $\vartheta>0$ this implies $\ex\left[\sum_{\bk \in \Z^d}\ind{|Z_{\bk}|>1}\right]<\infty$. 
\end{remark}

\begin{proof}[Proof of Proposition \ref{prop:Palm1}]
Fix an arbitrary anchoring function $A$. By the definition of $\bZ^A$,  (\ref{eq:time-change1}) and shift-invariance of $h$ we get
\begin{align*}
\vartheta \ex\left[h(\bZ^A) \cdot \sum_{\bk \in \Z^d}\ind{|Z_{\bk}^A|>1}\right] & = \sum_{\bk \in \Z^d} \ex[h(\bY)\ind{|Y_{\bk}|>1, A(\bY)=0}] \\
&=\sum_{\bk \in \Z^d} \ex[h(\bY)\ind{|Y_{-\bk}|>1, A(\bY)=-\bk}] \\
& = \sum_{\bk \in \Z^d} \ex[h(\bY)\ind{A(\bY)=-\bk}] = \ex[h(\bY)] \, .
\end{align*}
The claim for an arbitrary representative $(Z_{\bi})_{\bi}$ of $\tilde{\bZ}$ now follows since the function $\bx\mapsto h((x_{\bi})_{\bi}) \cdot \sum_{\bk \in \Z^d} \ind{|x_{\bk}|>1}$ on $l_0$ is shift-invariant.
\end{proof}

The next result shows that the polar decomposition of the tail process carries over to the anchored tail process, and gives a representative of the anchored spectral tail process $\tilde{\bQ}$ only in terms of the original spectral tail process $\bTheta$.

\begin{lemma}\label{lem:spectral_Z}
Assume that $\pr(\bY\in l_0)=1$. Then $\pr(\|\tilde{\bZ}\| \ge y)=y^{-\alpha}$ for all $y\geq 1$, and $\|\tilde{\bZ}\|$ and $\tilde{\bQ}$ are independent.
Moreover, 
\begin{align}\label{eq:Q's}
\tilde{\bQ} \eind \bTheta \: \big| \: A^{fm}(\bTheta)=\bo  \; \text{in} \;\: \lo \, .
\end{align}
\end{lemma}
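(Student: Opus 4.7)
The plan is to establish all three conclusions simultaneously by choosing the anchoring function $A=A^{fm}$, which is especially well suited to exploit the polar decomposition $\bY=|Y_{\bo}|\bTheta$ in which $|Y_{\bo}|$ is Pareto with index $\alpha$ and independent of $\bTheta$.

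First, I would observe that the event $\{A^{fm}(\bY)=\bo\}$ depends only on $\bTheta$. Indeed, $|Y_{\bo}|\geq 1$ almost surely, so scaling by the strictly positive constant $|Y_{\bo}|$ does not affect the location of the first maximum; hence $A^{fm}(\bY)=A^{fm}(\bTheta)$ almost surely. On this event $\bo$ is the first argmax of $|\bTheta|$, so $\|\bTheta\|=|\Theta_{\bo}|=1$; consequently $\|\bY\|=|Y_{\bo}|$ and $\bY/\|\bY\|=\bTheta$.

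Second, the definition $\bZ^{A^{fm}}\eind (\bY\mid A^{fm}(\bY)=\bo)$ combined with the independence of $|Y_{\bo}|$ and $\bTheta$ shows that, under this conditioning, $|Y_{\bo}|$ retains its unconditional Pareto$(\alpha)$ law and remains independent of the conditioned $\bTheta$. Reading off the norm and the polar part of $\bZ^{A^{fm}}$ by the identities from the previous paragraph then yields $\|\bZ^{A^{fm}}\|\eind |Y_{\bo}|$, $\bZ^{A^{fm}}/\|\bZ^{A^{fm}}\|\eind(\bTheta\mid A^{fm}(\bTheta)=\bo)$, and the independence of these two quantities.

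Third, I would transfer the conclusion to the quotient space $\lo$. By the preceding lemma the law of $\tilde{\bZ}$ in $\lo$ is that of the shift-equivalence class of $\bZ^{A^{fm}}$, and both the uniform norm $\|\cdot\|$ and the polar map $\bx\mapsto\bx/\|\bx\|$ are shift-invariant, hence descend unambiguously to $\lo$. Combining this with the previous step delivers the Pareto$(\alpha)$ marginal of $\|\tilde{\bZ}\|$, its independence from $\tilde{\bQ}$, and the distributional identity~\eqref{eq:Q's}. The only mildly delicate point is checking that the computations carried out for the specific representative $\bZ^{A^{fm}}$ lift cleanly to $\lo$, which reduces to the shift-invariance just noted.
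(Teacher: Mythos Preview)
Your proof is correct and follows essentially the same route as the paper: choose the anchor $A^{fm}$, use that $A^{fm}(\bY)=A^{fm}(\bTheta)$ by scale-invariance so that on this event $\|\bY\|=|Y_{\bo}|$ and $\bY/\|\bY\|=\bTheta$, and then invoke the independence of $|Y_{\bo}|$ and $\bTheta$. The paper compresses all of this into two displayed lines and the phrase ``by the properties of the tail process,'' whereas you have made each step explicit, including the passage to $\lo$ via shift-invariance.
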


\begin{proof}
Using $A^{fm}$ as anchor implies that in $\lo$,
\begin{align*}
(\|\tilde{\bZ}\|, \tilde{\bQ}) & \eind (\|\bY\|, \bY/\|\bY\|) \; \big| \; A^{fm}(\bY)=\bo \\
& = (|Y_{\bo}|, \bTheta) \; \big| \; A^{fm}(\bTheta)=\bo \, .
\end{align*}
The result now follows by the properties of the tail process.
\end{proof}
\begin{remark}\label{rem:shift_invariance}
Let $\bZ = (Z_{\bi})_{\bi\in \Z^d}$ be any representative of $\tilde{\bZ}$ and set $\bQ = (Q_{\bi})_{\bi} = (Z_{\bi}/ \|\bZ\|)_{\bi} $.
Since the function $\bx \mapsto \|\bx\|$ on $l_0$ is shift-invariant, the previous result implies that $\pr(\|\bZ\| \ge y)=y^{-\alpha}$, $y\geq 1$. On the other hand, $\|\bZ\|$ and $\bQ$ (as an element in $l_0$) are in general not independent. Still, if $h:l_0 \to [0,\infty)$ is measurable and shift-invariant then
\begin{align*}
\ex[h(\bZ)] = \ex \left[\int_{1}^{\infty} h(y \bQ) \alpha y^{-\alpha - 1} dy \right] \, .
\end{align*}
\end{remark}

\begin{example}\label{exa:moving_averages}
	Let $(\xi_{\bi} : {\bi}\in\Z^d)$ be i.i.d.\ random variables with regularly varying distribution
	with index $\alpha >0$, i.e.\
\begin{align*}
\lim_{u\toi} \frac{\pr(|\xi_{\bo}|>uy)}{\pr(|\xi_{\bo}|>u)} =y^{-\alpha} \, , \, y>0 \, ,
\end{align*}	
	and for some $p\in [0,1]$,
		\begin{align*}
\lim_{u\toi} \pr(\xi_{\bo}>0 \mid |\xi_{\bo}|>u) =p \, , \,\lim_{u\toi} \pr(\xi_{\bo}<0 \mid |\xi_{\bo}|>u) =1-p \, .
\end{align*}	
	 Consider the infinite order moving average process $\bX=(X_{\bi}:\bi\in\Z^d)$ defined by
	\begin{align*}
	X_{\bi} = \sum_{\bj \in\Z^d} c_{\bj} \xi_{\bi - \bj}  \, , 
	\end{align*}
	where $(c_{\bj} : \bj\in\Z^d)$ is a field of real numbers satisfying
\begin{align*}
	 0<\sum_{\bj\in\Z^d} |c_{\bj}|^\delta < \infty \, ,
	 \end{align*}
	for some $\delta>0$ such that $\delta<\alpha$ and $\delta\leq 1$\,.
	It is easily shown (see e.g.~\cite[Section 4.5]{resnick:1987}) that this condition ensures that the series above is absolutely convergent. Note also that $\sum_{{\bj}\in\Z^d} |c_{\bj}|^\alpha<\infty$.	
	 Furthermore, it can be proved as in \cite[Lemma 4.24]{resnick:1987}
	that 
	\begin{align*}
	\lim_{u\to\infty} \frac{\PP(|X_{\bo}|>u)}{\PP(|\xi_{\bo}|>u)}=\sum_{\bj\in\Z^d} |c_{\bj}|^\alpha \; .  
	\end{align*}
	Moreover, extending the arguments of Meinguet and Segers~\cite[Example 9.2]{meinguet:segers:2010}, one can show that the stationary field $\bX$ is jointly regularly varying with index $\alpha$ and the spectral tail field given by 
\begin{align*}
(\Theta_{\bi})_{\bi \in \Z^d} \eind (\spectral c_{\bi + J}/|c_{J}|)_{\bi \in \Z^d}
\end{align*}		
where $\spectral$ is a $\{-1,1\}$-valued random variable with  
$\pr(\spectral=1)=p$, 
	and $J$ an $\Z^d$-valued random variable, independent of $\spectral$, such that $\pr(J=\bj)=|c_{\bj}|^{\alpha}/\sum_{{\bi}\in\Z^d} |c_{\bi}|^\alpha$ for all $\bj \in \Z^d$. 
	
	In particular, $\pr(\bTheta\in l_0)=\pr(\bY\in l_0)=1$. Choosing $A^{fm}$ as the anchoring function (see (\ref{eq:extremal_index_maximum}) and (\ref{eq:Q's})) yields that
	\begin{align*}
\vartheta = \frac{\max_{\bj\in \Z^d} |c_{\bj}|^\alpha}{\sum_{\bj \in \Z^d} |c_{\bj}|^\alpha}\, , \quad \tilde{\bQ}\eind\left( \frac{\spectral c_{\bj}}{\max_{\bi\in \Z^d}|c_{\bi}|}  \right)_{\bj\in\ZZ^d} \; \text{in } \lo \, .
\end{align*}

\end{example}

\subsubsection{Intensity convergence}


The following result is an extension of the case $d=1$ shown in \cite[Lemma 3.3]{basrak:planinic:soulier:2018}. The proof is based on \cite[Theorem 4.3]{basrak:segers:2009} and can be found in~\cite[Section 2.3.5]{thesis} (note that $\tilde{\bQ}$ is there denoted by $\bQ$).

\begin{proposition}\label{prop:intensity_convergence}
If $(r_n)_{n\in \N}$ is a sequence of positive integers satisfying $r_n\toi$, $r_n/n\to 0$, and such that (\ref{eq:AC}) holds, then 
\begin{align}\label{eq:intensity_convergence_limiting_measure}
k_n^d \pr(a_n^{-1}\bX_{r_n}\in \cdot)\vto 
\nu(\, \cdot \, )=\vartheta\int_0^\infty \pr(y\tilde{\bQ} \in \cdot) \alpha y^{-\alpha - 1} dy     \, ,
\end{align}
as $n\toi$ in $\mathcal{M}(\loo)$, 
where $\vartheta\in (0,1]$ and the anchored spectral tail process $\tilde{\bQ}$ are defined in \Cref{subs:anchor}.

\end{proposition}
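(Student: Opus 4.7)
The plan is to extend the $d=1$ proof from \cite{basrak:planinic:soulier:2018} to the lattice $\Z^d$, using the anchoring framework from \Cref{subs:anchor}. Since Lipschitz functions on $\loo$ that vanish on a $\dtilde$-neighbourhood of $\tilde{\bo}$ are convergence determining (cf.\ \cite[Proposition 4.1]{basrak:planinic:2019}), it suffices to prove $k_n^d \EE[f(a_n^{-1}\tilde{\bX}_{r_n})] \to \nu(f)$ for every such $f$. Fix one and choose $\epsilon > 0$ so that $f(\tbx) = 0$ whenever $\|\bx\| \leq \epsilon$.

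Pick a group order $\lleq$ on $\Z^d$ and let $A(\bx) := \min_{\lleq}\{\bj \in \Z^d : |x_\bj| > \epsilon\}$, the first exceedance of level $\epsilon$. Since $f$ vanishes on $\{\|\bx\| \leq \epsilon\}$, we may decompose
\[
f(a_n^{-1}\tilde{\bX}_{r_n}) = \sum_{\bi \in J_{r_n}} f(a_n^{-1}\tilde{\bX}_{r_n})\,\mathbf{1}\{A(a_n^{-1}\bX_{r_n}^{\mathrm{pad}}) = \bi\},
\]
where $\bX_{r_n}^{\mathrm{pad}}$ denotes $\bX_{r_n}$ padded with zeros outside $J_{r_n}$. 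Taking expectations, stationarity of $\bX$ together with shift-invariance of $f$ on $\lo$ transforms the $\bi$-th summand into
\[
\PP(|X_\bo| > \epsilon a_n) \cdot \EE\!\left[f(a_n^{-1}\tilde{\bX}_{J_{r_n}-\bi}^{\mathrm{pad}}) \mathbf{1}\{A(a_n^{-1}\bX_{J_{r_n}-\bi}^{\mathrm{pad}}) = \bo\} \,\big|\, |X_\bo| > \epsilon a_n\right].
\]

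For indices $\bi$ such that $J_{r_n}-\bi$ contains a growing symmetric box around $\bo$, the anticlustering condition \eqref{eq:AC} (together with \cite[Proposition 4.2]{basrak:segers:2009}, which gives $\bY \in l_0$ a.s.) implies that the conditioned truncated block converges in $\lo$ to $\epsilon \tilde{\bY}$. By continuity and boundedness of $f$, each such conditional expectation then converges to
\[
\EE[f(\epsilon \tilde{\bY}) \mathbf{1}\{A^{fe}(\bY) = \bo\}] = \vartheta\, \EE[f(\epsilon \tilde{\bZ})],
\]
the last equality by the definition of the anchored tail process. Boundary indices $\bi$ (for which $J_{r_n}-\bi$ is not yet large around $\bo$) contribute negligibly, once more by \eqref{eq:AC}, which caps the contribution of any single exceedance anchored far from the bulk. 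Summing the $r_n^d$ terms and using $k_n^d r_n^d \sim n^d$ with $n^d \PP(|X_\bo|>\epsilon a_n) \to \epsilon^{-\alpha}$ yields
\[
k_n^d\,\EE[f(a_n^{-1}\tilde{\bX}_{r_n})] \longrightarrow \epsilon^{-\alpha}\vartheta\, \EE[f(\epsilon \tilde{\bZ})].
\]
Applying the polar decomposition \eqref{eq:polarZ} to the shift-invariant map $\bx \mapsto f(\epsilon \bx)$ and substituting $z = \epsilon y$ identifies this limit with $\nu(f) = \vartheta \int_0^\infty \EE[f(z\tilde{\bQ})]\alpha z^{-\alpha-1}\,dz$, since $f$ vanishes on $\{\|\bx\| \leq \epsilon\}$.

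The main obstacle is the single-summand convergence: one needs a multivariate extension of \cite[Theorem 4.3]{basrak:segers:2009} from $\Z$ to $\Z^d$. The critical step is upgrading the finite-dimensional convergence $a_n^{-1}\bX \mid |X_\bo| > \epsilon a_n \dto \epsilon \bY$ in the product topology of $\R^{\Z^d}$ to convergence in $\lo$ of the zero-padded truncations; this is carried out by restricting exceedances to a large box (where finite-dimensional convergence applies directly) and controlling the remainder via \eqref{eq:AC}, with the continuity of $f$ providing the final step.
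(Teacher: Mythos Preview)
Your approach is correct and matches the paper's: the proof (deferred to \cite[Section 2.3.5]{thesis}) is precisely the extension of \cite[Lemma 3.3]{basrak:planinic:soulier:2018} to $\Z^d$ based on \cite[Theorem 4.3]{basrak:segers:2009}, and your first-exceedance decomposition plus anticlustering argument is exactly that machinery. The only packaging difference is that the cited proof establishes \eqref{eq:RVofMrn} and \eqref{eq:AsympOfCluster} first and then deduces \eqref{eq:intensity_convergence_limiting_measure} (see \Cref{rem:typical_cluster}), whereas you go directly; the technical core---controlling boundary indices by the double limit in $m$ and $n$ via \eqref{eq:AC}, and upgrading finite-dimensional convergence to $l_0$-convergence of the padded block---is identical.
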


\begin{remark}
Note that $\nu$ is a proper element of $\mathcal{M}(\loo)$. Indeed, since $\|\tilde{\bQ}\|=1$, $\nu(\{\bx\in \loo : \|\bx\|>\epsilon\})=\vartheta \epsilon^{-\alpha}<\infty$ for all $\epsilon>0$.
\end{remark}

\begin{remark}
\label{rem:typical_cluster}
Observe, since $\nu(\{\bx : \|\bx\|=u\})=0$ for all $u>0$, 
(\ref{eq:intensity_convergence_limiting_measure}) implies that 
\begin{align}\label{eq:RVofMrn}
k_n^d \pr(M_{r_n}>a_n u)\to \vartheta u^{-\alpha} \, , \, u>0 \, , 
\end{align} 
as $n\toi$, where $M_{r_n}=\|\bX_{r_n}\|$ is the maximum of the block $\bX_{r_n}$. Moreover, for every $u>0$,
\begin{align*}
\pr((a_n u)^{-1} \bX_{r_n} \in \cdot \mid M_{r_n}>a_n u ) &= \frac{k_n^d \pr((a_n u)^{-1} \bX_{r_n} \in \cdot \, , \, M_{r_n}>a_n u )}{k_n^d \pr(M_{r_n}>a_n u)} \\
&\wto \frac{u^{\alpha}}{\vartheta} \vartheta \int_{u}^{\infty} \pr(u^{-1} y \tilde{\bQ} \in \cdot \,)\alpha y^{-\alpha-1} dy \\
&=\int_{1}^{\infty} \pr( y \tilde{\bQ} \in \cdot \, )\alpha y^{-\alpha-1} dy =\pr(\tilde{\bZ}\in  \cdot \, ) \, ,
\end{align*}
where $\wto$ denotes weak convergence of finite measures and the last line follows from \Cref{lem:spectral_Z}. Hence, for  all $u>0$,
\begin{align}\label{eq:AsympOfCluster}
(a_n u)^{-1} \bX_{r_n} \mid M_{r_n}>a_n u \dto \tilde{\bZ}  \:\: \text{in} \; \lo \, .
\end{align}
Thus, the distribution of the anchored tail process $\tilde{\bZ}$ is the asymptotic distribution of a cluster of extremes of $\bX$, i.e.\ block of size $r_n^d$ with at least one exceedance over the level $a_n u$. Also, we identify the anchored spectral process $\tilde{\bQ}$ by 
\begin{align*}
M_{r_n}^{-1} \bX_{r_n} \mid M_{r_n}>a_n u \dto \tilde{\bQ}  \:\: \text{in} \; \lo \, .
\end{align*}



%
In fact, convergences (\ref{eq:RVofMrn}) and (\ref{eq:AsympOfCluster}) 
imply 
(\ref{eq:intensity_convergence_limiting_measure}), this is actually the approach in~\cite[Lemma 3.3]{basrak:planinic:soulier:2018}. 
\end{remark}

\subsubsection{Point process convergence}
\label{subs:PP_conv}
 Following \cite{basrak:planinic:soulier:2018} we give a convenient convergence determining family for point processes on $[0,1]^d\times \loo$ (see \Cref{defn:conv_det_fam}).
For an element $\bx\in \lo$  and any $\delta>0$ denote by $\bx^\delta \in \lo$ the equivalence class of the sequence $(x_{\bi}\ind{|x_{\bi}|>\delta})_{\bi}$, where $(x_{\bi})_{\bi}\in l_0$ is an arbitrary representative of $\bx$. 
Let $\F_0'$ be the family of all functions $f\in CB_b^+(\loo)$ such that for some $\delta>0$, $f(\bx)=f(\bx^\delta)$ for all $\bx\in\lo$, where we set $f(\bo)=0$, i.e.\ $f$ depends only on coordinates greater than $\delta$ in absolute value. As shown in \cite[Lemma 2.5.2 and Remark 2.5.4]{thesis}, $\F_0'$ is convergence determining in the sense of \Cref{defn:conv_det_fam}.

In view of \Cref{prop:intensity_convergence}, our main result now follows by an application of \Cref{cor:poissonapprox_stationarycase}.


\begin{theorem}\label{thm:PPconv}

 Let $\bX$ be a stationary regularly varying random field with tail index $\alpha>0$. Assume that $(r_n)_{n\in \N}$ is  a sequence of positive integers satisfying $r_n\toi$, $r_n/n\to 0$, such that (\ref{eq:AC}) holds 
  and the family $((\bi/k_n,\bX_{n,\bi}/a_n): n\in \N, \: \bi \in I_n)$ is $AI(\F_0')$.

Then 
\begin{align}\label{eq:PPconvinLo}
N_n' = \sum_{\bi\in I_n} \delta_{(\bi/k_n,\bX_{n,\bi}/a_n)} \dto N'=\sum_{i\in \N}\delta_{(\bT_i, P_i \tilde{\bQ}_i)} 
\end{align}  
in $\mathcal{M}_p([0,1]^d \times \loo)$, where $N'\sim \PPP(\mbox{Leb}\times \nu)$ and
  \begin{enumerate}[(i)]
  \item $\sum_{i\in \N}\delta_{(\bT_i,P_i)}$ is a Poisson point process on $[0,1]^d\times(0,\infty)$
   with intensity measure $\vartheta \mbox{Leb} \times \alpha y^{-\alpha-1} dy$;
  \item $(\tilde{\bQ}_i)_{i\in \N}$ is a sequence of i.i.d.\ elements in $\lo$, independent of
    $\sum_{i\in \N}\delta_{(\bT_i, P_i)}$ and with common distribution equal to the distribution of the anchored spectral process
    $\tilde{\bQ}$.
  \end{enumerate}
\end{theorem}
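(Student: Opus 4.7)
The plan is to apply \Cref{cor:poissonapprox_stationarycase} directly to the family $X_{n,\bi}:=\bX_{n,\bi}/a_n$, $\bi\in I_n=\{1,\dots,k_n\}^d$, viewed as identically distributed random elements in $\spaceX=\loo$ (with bounded sets being those bounded away from $\bo$). By stationarity, all of them share the distribution of $\bX_{r_n}/a_n$.

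The first step is to verify the intensity convergence \eqref{eq:independentblocksconvergence_idcase}: this is precisely \Cref{prop:intensity_convergence}, which under the anticlustering assumption \eqref{eq:AC} yields
\[
k_n^d\,\pr(a_n^{-1}\bX_{r_n}\in\cdot)\vto \nu(\,\cdot\,)=\vartheta\int_0^\infty \pr(y\tilde{\bQ}\in\cdot)\,\alpha y^{-\alpha-1}\,dy
\]
in $\mathcal{M}(\loo)$. Since $\nu$ is locally finite on $\loo$ and $k_n\toi$, the null-array condition \eqref{eq:nullArrayCond} follows automatically. Combined with the standing $AI(\F_0')$ hypothesis and the fact that $\F_0'$ is convergence determining (as recalled just above the theorem), \Cref{cor:poissonapprox_stationarycase} yields $N_n'\dto N'$ in $\mathcal{M}_p([0,1]^d\times\loo)$ with $N'\eind \PPP(Leb\times\nu)$.

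The remaining step is to identify $N'$ with the cluster representation $\sum_{i\in\N}\delta_{(\bT_i,P_i\tilde{\bQ}_i)}$. Here I would invoke the polar decomposition built into the formula for $\nu$: it expresses $\nu$ as the image of the product measure $\vartheta\,\alpha y^{-\alpha-1}\,dy\otimes\pr(\tilde{\bQ}\in\cdot)$ on $(0,\infty)\times\lo$ under the continuous map $(y,\bq)\mapsto y\bq$. By the marking theorem for Poisson processes (see e.g.\ \cite[Theorem 3.3]{kallenberg:2017}), if $\sum_{i\in\N}\delta_{(\bT_i,P_i)}$ is a PPP on $[0,1]^d\times(0,\infty)$ with intensity $\vartheta\,Leb\times d(-y^{-\alpha})$ and $(\tilde{\bQ}_i)_{i\in\N}$ are i.i.d.\ copies of $\tilde{\bQ}$ independent of it, then $\sum_{i\in\N}\delta_{(\bT_i,P_i,\tilde{\bQ}_i)}$ is a PPP on $[0,1]^d\times(0,\infty)\times\lo$ with the corresponding product intensity. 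Applying the map $(\bt,y,\bq)\mapsto(\bt,y\bq)$ and using the mapping theorem transforms this into a PPP on $[0,1]^d\times\loo$ with intensity $Leb\times\nu$, which has the same distribution as $N'$. Local finiteness on $\loo$ is preserved because $\|y\bq\|=y$ (as $\|\tilde{\bQ}\|=1$ a.s.), so the pre-image of any set $[0,1]^d\times\{\bx:\|\bx\|>\varepsilon\}$ is $[0,1]^d\times(\varepsilon,\infty)\times\lo$, which has finite mass.

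I do not expect a genuine obstacle here: the content of the theorem is a clean assembly of \Cref{cor:poissonapprox_stationarycase} (Poisson approximation under $AI$), \Cref{prop:intensity_convergence} (intensity convergence in terms of the anchored spectral process), and the marking/mapping theorems for Poisson processes. The only point requiring mild care is checking that the notion of boundedness on $\loo$ interacts correctly with the polar map $(y,\bq)\mapsto y\bq$, which the identity $\|y\tilde{\bQ}\|=y$ takes care of transparently.
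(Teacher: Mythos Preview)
Your proposal is correct and matches the paper's own approach essentially verbatim: the paper states that the result is a direct application of \Cref{cor:poissonapprox_stationarycase} using \Cref{prop:intensity_convergence} for the intensity convergence, with the only remaining step being the cluster representation of $\PPP(Leb\times\nu)$, for which it defers to \cite[Theorem~2.3.4]{thesis}. Your marking/mapping argument for the representation, together with the observation $\|y\tilde{\bQ}\|=y$ ensuring local finiteness, is exactly the standard way to carry out that deferred step.
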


\begin{proof}
The only thing left to verify is that $N'\sim \PPP(\mbox{Leb}\times \nu)$ can be represented as in (\ref{eq:PPconvinLo}) but this follows easily using standard arguments; for details see~\cite[Theorem 2.3.4]{thesis}.
\end{proof}

\begin{remark}
\label{rem:explanation}
If $(Q_{\bj}^i)_{\bj \in \Z^d}$, $i\in \N$ is a sequence of  independent elements of $l_0$ which are representatives of $\tilde{\bQ}$ and independent of $\sum_{i\in \N}\delta_{(\bT_i,P_i)}$, one can construct the limiting process $N'$ simply by considering $\sum_{i\in \N}\delta_{(\bT_i, P_i (Q_{\bj}^i)_{\bj \in \Z^d})}$ as a point process on $[0,1]^d \times \loo$.
\end{remark}

To illustrate the meaning of the result in \Cref{thm:PPconv} set $P_i=(\Gamma_i/  \vartheta)^{-1/ \alpha}$ where $\Gamma_i=E_1+\dots+E_i$, $i\in \N$,  with $(E_i)_{i\in \N}$ being i.i.d.\ standard exponential random variables, and let $(\bT_i)_{i\in \N}$ be i.i.d.\ uniform random vectors in $[0,1]^d$ independent of the sequence $(P_i)_{i\in \N}$. Then $\sum_{i\in \N} \delta_{(\bT_i,P_i)}$ is a $\PPP(\vartheta \mbox{Leb} \times \alpha y^{-\alpha-1} dy)$ which in addition satisfies $P_{1}> P_{2} > \dots$\, almost surely. 
Consequently, if $\bX_{n,(i)}$ and $T_{n,(i)}$, $i=1,2,\dots, k_n^d$, denote the original blocks $\bX_{n,\bi}$ and their positions  $\bi/k_n$, $\bi \in I_n$, but relabeled so that
\begin{align*}
\|\bX_{n,(1)}\| \geq \|\bX_{n,(2)}\| \geq \dots \geq \|\bX_{n,(k_n^d)}\|  \, , 
\end{align*} 
the continuous mapping theorem applied to (\ref{eq:PPconvinLo}) for every $k\in \N$ yields the convergence
\begin{align*}
\left(T_{n,(i)}, \bX_{n,(i)}/a_n \right)_{i=1,2,\dots,k} \dto \left(T_{i}, P_{i} (Q^{i}_{\bj})_{\bj\in \Z^d} \right)_{i=1,2,\dots,k} \, ,
\end{align*}
in the space $([0,1]^d \times \lo)^{k}$ (to show that the corresponding mapping is a.s.\ continuous w.r.t.\ the limit in (\ref{eq:PPconvinLo}) use \cite[Proposition 2.8]{basrak:planinic:2019}).

Furthermore, by applying the continuous mapping theorem to (\ref{eq:PPconvinLo}) and using similar arguments as in \cite[Proposition 1.34]{krizmanic:2010}, one obtains the following convergence of point processes on a simpler state space;
 the details can be found in \cite[Corollary 2.3.15]{thesis}.

\begin{corollary}
\label{cor:PPconv_back_to_R}
{In the notation of \Cref{rem:explanation}}, if there exists a sequence $r_n\toi,\, r_n/n \to 0$ for which (\ref{eq:PPconvinLo}) holds, then, with $J_n=\{1,\dots,n\}^d$,
\begin{align}
\label{eq:ppconvinR}
\sum_{\bj \in J_n}\delta_{(\bj / n , X_{\bj}/a_n)} \dto \sum_{i\in \N} \sum_{\bj\in \Z^d}\delta_{(\bT_i, P_iQ^i_{\bj} )}
\end{align}
in $\mathcal{M}_p([0,1]^d \times (\R\setminus\{0\}))$  with bounded sets being those which are bounded away from $[0,1]^d \times \{0\}$.
\end{corollary}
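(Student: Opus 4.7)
The idea is to apply the continuous mapping theorem to \eqref{eq:PPconvinLo} via an ``unpacking'' map that turns each block-valued atom $(\bt,\tbx)\in [0,1]^d\times\loo$ into the sum of its scalar coordinates $\sum_{\bj\in\Z^d}\delta_{(\bt,x_{\bj})}\ind{x_{\bj}\neq 0}$, where $(x_{\bj})_{\bj}\in l_0$ is any representative of $\tbx$. Because shifting the representative only re-indexes the atoms, this defines a map
\[
T:\mathcal{M}_p([0,1]^d\times\loo)\longrightarrow\mathcal{M}_p([0,1]^d\times(\R\setminus\{0\})),
\qquad T\Big(\sum_i\delta_{(\bt_i,\tbx_i)}\Big)=\sum_i\sum_{\bj\in\Z^d}\delta_{(\bt_i,x^i_{\bj})}\ind{x^i_{\bj}\neq 0},
\]
that is independent of the choice of representatives. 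Observe that the natural ``padding by zeros'' representative of $\bX_{n,\bi}/a_n$ gives
\[
T(N_n')=\sum_{\bi\in I_n}\sum_{\bj\in J_{n,\bi}}\delta_{(\bi/k_n,\,X_{\bj}/a_n)},
\]
while, by \Cref{rem:explanation}, $T(N')=\sum_{i\in\N}\sum_{\bj\in\Z^d}\delta_{(\bT_i,\,P_iQ^i_{\bj})}$, which is exactly the claimed limit.

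The first step is to verify continuity of $T$ on a set of full limit measure. For $f\in CB_b^+([0,1]^d\times(\R\setminus\{0\}))$ with support in $[0,1]^d\times\{|y|>\epsilon\}$, the induced functional $\tilde f(\bt,\tbx)=\sum_{\bj\in\Z^d}f(\bt,x_{\bj})$ is nonnegative, shift-invariant, and supported in $[0,1]^d\times\{\|\tbx\|>\epsilon\}$, which is bounded in $[0,1]^d\times\loo$. Although $\tilde f$ is not globally bounded (its value on $\tbx$ grows with the number of coordinates exceeding $\epsilon$), the limit $N'$ a.s.\ puts only finitely many atoms in $[0,1]^d\times\{\|\tbx\|>\epsilon\}$, and each such atom, being of the form $P_i\tilde\bQ_i$, admits a representative in $l_0$ with only finitely many coordinates above $\epsilon$. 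Restricting attention to atoms with $\|\tbx\|\leq R$ makes $\tilde f$ bounded on the relevant set, and sending $R\to\infty$ (using that $\pr(\max_i\|P_i\tilde\bQ_i\|_{[0,1]^d}>R)\to 0$) allows a standard truncation-plus-continuous-mapping argument to yield $T(N_n')\dto T(N')$ by convergence of Laplace functionals.

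The second step is to pass from $T(N_n')$ to the target process $\tilde N_n:=\sum_{\bj\in J_n}\delta_{(\bj/n,X_{\bj}/a_n)}$. Two discrepancies must vanish. First, the spatial location of an atom coming from the block $J_{n,\bi}$ is $\bi/k_n$ in $T(N_n')$ but $\bj/n$ in $\tilde N_n$; since $|\bi/k_n-\bj/n|=O(r_n/n)\to 0$ uniformly in $\bi$ and $\bj\in J_{n,\bi}$, for any $f\in CB_b^+$ which is uniformly continuous in its first argument (a subfamily that is still convergence determining by \Cref{defn:conv_det_fam}), the difference $|T(N_n')(f)-\tilde N_n'(f)|\to 0$ deterministically, where $\tilde N_n'=\sum_{\bi\in I_n}\sum_{\bj\in J_{n,\bi}}\delta_{(\bj/n,X_{\bj}/a_n)}$. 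Second, $\bigcup_{\bi\in I_n}J_{n,\bi}=\{1,\dots,k_nr_n\}^d$ misses at most $n^d-(k_nr_n)^d=O(n^{d-1}r_n)=o(n^d)$ indices of $J_n$; by stationarity and \eqref{eq:a_n}, the expected number of points of $\tilde N_n-\tilde N_n'$ landing in any set $[0,1]^d\times\{|y|>\epsilon\}$ is at most $(n^d-(k_nr_n)^d)\pr(|X_{\bo}|>a_n\epsilon)=o(1)$, so this contribution is negligible in probability.

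The main obstacle is the first step, namely formulating the continuity of $T$ carefully enough to accommodate that $\tilde f$ is typically unbounded as a function on $\loo$. The standard workaround via truncation at large norm works because the limiting Poisson measure $Leb\times\nu$ assigns finite mass to each set bounded away from $\bo$, so bad behavior can be confined to a thin tail with small probability uniformly in $n$. Once the continuity step is in place, the rest is a routine comparison between block-based and pointwise point processes following \cite[Proposition 1.34]{krizmanic:2010}.
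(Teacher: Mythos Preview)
Your proposal is correct and matches the paper's own approach: the paper states only that the corollary follows ``by applying the continuous mapping theorem to \eqref{eq:PPconvinLo} and using similar arguments as in \cite[Proposition 1.34]{krizmanic:2010},'' with details deferred to \cite[Corollary 2.3.15]{thesis}. Your unpacking map $T$, the truncation to handle the unboundedness of $\tilde f$, and the two clean-up steps (spatial perturbation $\bi/k_n\mapsto\bj/n$ and the $o(n^d)$ edge indices) are precisely the ingredients behind that reference.
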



Observe that in this convergence one loses the information about the structure of the cluster in the limit, see \cite{basrak:planinic:soulier:2018} for a detailed discussion. 

\begin{remark}
\label{rem:extremal_index}
As noted by \cite[Remark 4.7]{basrak:segers:2009}, when convergence in (\ref{eq:ppconvinR}) holds, the quantity $\vartheta$ is  the \textit{extremal index} of the field $(|X_{\bj}|)_{\bj\in \Z^d}$ since $n^d\pr(|X_{\bo}|>a_n u)\to u^{-\alpha}$ and 
\begin{align*}
\pr(\max_{\bj \in J_n}|X_{\bj}|\leq a_n u)\to \pr\left(\sum_{i\in \N} \1{\{P_i>u\}}=0\right)=e^{-\vartheta u^{-\alpha}} \, ,
\end{align*}
as $n\toi$, for all $u>0$.
\end{remark}

The assumptions of Theorem \ref{thm:PPconv} are straightforward to check in the case of $m$-dependent stationary fields. In general, however, checking these assumptions is not trivial. 
Still, one can extend the convergence in (\ref{eq:PPconvinLo}) to fields which can be approximated by $m$-dependent fields, such as spatial infinite order moving average processes from \Cref{exa:moving_averages} as explained in the following remark.

\begin{remark} \label{rem:mdep}

		Assume that $\bX=(X_{\bi}: \bi \in \Z^d)$ is a stationary random field  such that  there exists a sequence of stationary regularly varying $m$-dependent fields $\bX^{(m)}=(X_{\bi}^{(m)}: \bi \in \Z^d)$, $m\in \N$, and two sequences of strictly positive real numbers $(b_n)$ and $(d^{(m)})_m$ such that for all $m\in \N$ 
		$n^d \pr(|X^{(m)}_{\bo}|> b_n)\to d^{(m)} >0 \, ,$
		while also for any $u >0$
		\begin{align*}
		\lim_{m\to\infty}\limsup_{n\to\infty} \PP(\max_{\bone\leq \bi  \leq \bone \cdot n}|X_{\bi}^{(m)}-X_{\bi}|>b_n u)=0 \; . 
		\end{align*}	
		Provided that the tail processes of the approximating random fields   $\bX^{(m)}$ behave reasonably as $m\toi$, the process $\bX$ satisfies the Poissonian limiting relation in \eqref{eq:PPconvinLo}, see \cite[Section 2.4.1]{thesis} for details, cf.\ also {Kulik and Soulier~\cite{kulik:soulier:2020}} who study the problem in the time series setting.

\end{remark}

In Section \ref{sec:PrThmAl} below
we show that Theorem \ref{thm:PPconv} can be applied to the random field of (exponentially transformed) scores from the sequence alignment problem. In particular, this is an example of a field with a nontrivial dependence structure, but for which the {asymptotic $\F_0'$-independence property} can be shown to hold. For this purpose we apply Proposition \ref{prop:AI_condition} and for convenience, we rephrase it in this setting and in the form suitable for our needs. 


\begin{corollary}\label{prop:CheckingAI_regVarFields}
Let for each $n\in \N$, $(\tilde{\bX}_{n,\bi} : \bi \in I_n)$ be identically distributed random elements in $\lo$ and such that for all $\epsilon>0$,
\begin{align}\label{eq:CheckingAI_regVarFields_1}
\limsup_{n\toi}k_n^d \pr(\|\tilde{\bX}_{n,\bone}\|>a_n\epsilon)<\infty \, . 
\end{align}
If there exists a neighborhood structure $(B_n(\bi) : n\in \N, \, \bi\in I_n)$ such that, denoting $\|B_n\|=\max_{\bi \in I_n} |B_n(\bi)|$,
\begin{enumerate}[(i)]
\item As $n\toi$, $\|B_n\|/k_n^d \to 0$ and for all $\epsilon>0$,
\begin{align}\label{eq:CheckingAI_regVarFields_2}
k_n^d \|B_n\| \max_{\substack{\bi \in I_n \\ \bi\neq \bj\in B_n(\bi)}}\pr(\|\tilde{\bX}_{n,\bi}\|>a_n\epsilon , \,\|\tilde{\bX}_{n,\bj}\|>a_n\epsilon) \to 0\, ;
\end{align}
\item \label{it:CheckingAI} For $n$ big enough, $\tilde{\bX}_{n,\bi}$ is independent of $\sigma(\tilde{\bX}_{n,\bj} : \bj \notin B_n(\bi))$ for each $\bi \in I_n$.
\end{enumerate}
Then the family $((\bi/k_n,\tilde{\bX}_{n,\bi}/a_n): n\in \N, \: \bi \in I_n)$ is $AI(\F'_0)$.
\end{corollary}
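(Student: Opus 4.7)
The plan is to invoke Proposition~\ref{prop:AI_condition} applied to the random elements
$$X'_{n,\bi} := (\bi/k_n,\,\tilde{\bX}_{n,\bi}/a_n)\in [0,1]^d\times \loo,\qquad \bi\in I_n,$$
with the prescribed neighborhoods $B_n(\bi)$ and an arbitrary total ordering of $I_n$. As a proper localizing sequence on the product space $[0,1]^d\times\loo$ (cf.~Example~\ref{exa:M0Convergence} combined with the convention on bounded sets from Section~\ref{sec:basic_setup}) I take $K'_m = [0,1]^d\times \{\bx\in\loo : \|\bx\|>1/m\}$. Fix $f\in \F'_0$; by definition its support is bounded in $[0,1]^d\times\loo$ and therefore contained in some $K'_m$, i.e.\ in a set of the form $[0,1]^d\times\{\bx:\|\bx\|>\epsilon\}$ for some $\epsilon=\epsilon(f)>0$. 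The whole argument then reduces to controlling the three quantities $b_{n,1}^m$, $b_{n,2}^m$, $b_{n,3}(f)$ from Proposition~\ref{prop:AI_condition}, where the deterministic first coordinate $\bi/k_n$ plays no role in any of the probabilistic bounds.

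\textbf{Estimating $b_{n,1}^m$ and $b_{n,2}^m$.} By identical distribution of $\tilde{\bX}_{n,\bi}$, set $p_n := \pr(\|\tilde{\bX}_{n,\bone}\|>a_n\epsilon)$. Since each $\bi\in I_n$ contributes at most $|B_n(\bi)|\leq \|B_n\|$ pairs,
$$b_{n,1}^m \;\leq\; k_n^d\,\|B_n\|\, p_n^{\,2}.$$
Assumption (\ref{eq:CheckingAI_regVarFields_1}) yields a constant $C>0$ with $k_n^d p_n\leq C$, so $b_{n,1}^m \leq C^2 \|B_n\|/k_n^d \to 0$ by condition~(i). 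Similarly,
$$b_{n,2}^m \;\leq\; k_n^d\,\|B_n\|\,\max_{\substack{\bi\in I_n\\ \bi\neq\bj\in B_n(\bi)}}\pr\bigl(\|\tilde{\bX}_{n,\bi}\|>a_n\epsilon,\,\|\tilde{\bX}_{n,\bj}\|>a_n\epsilon\bigr)\;\longrightarrow\; 0$$
by condition (\ref{eq:CheckingAI_regVarFields_2}).

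\textbf{Estimating $b_{n,3}(f)$.} Assumption (\ref{it:CheckingAI}) states that for $n$ large, $\tilde{\bX}_{n,\bi}$ is independent of $\sigma(\tilde{\bX}_{n,\bj}:\bj\notin B_n(\bi))\supseteq \sigma(\tilde{\bX}_{n,\bj}:\bj\in\tilde{B}^c_n(\bi))$. Since the exponentials $e^{-f(X'_{n,\bj})}$ are, for fixed $n$, deterministic functions of $\tilde{\bX}_{n,\bj}$, this independence transfers verbatim to the exponentials, so every summand defining $b_{n,3}(f)$ vanishes for $n$ sufficiently large, giving $b_{n,3}(f)=0$ eventually.

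\textbf{Conclusion and main difficulty.} Combining the three bounds via Proposition~\ref{prop:AI_condition} yields $AI(\F'_0)$. The only genuine subtlety is bookkeeping: checking that the product space $[0,1]^d\times \loo$ with the bounded-set convention of Section~\ref{sec:basic_setup} admits $(K'_m)$ as a proper localizing sequence, and that every $f\in\F'_0$ (which by construction depends only on the second argument through coordinates exceeding some $\delta$) has support in one such $K'_m$; once this is in place, the estimates reduce to straightforward counting in $I_n$ and a union bound, with both remaining inputs~(\ref{eq:CheckingAI_regVarFields_1})--(\ref{eq:CheckingAI_regVarFields_2}) engineered exactly for this purpose.
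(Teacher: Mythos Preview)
Your proof is correct and follows essentially the same approach as the paper's: both apply Proposition~\ref{prop:AI_condition} with the localizing sets $K'_m=[0,1]^d\times\{\bx:\|\bx\|>\epsilon_m\}$, bound $b_{n,1}^m$ via identical distribution and $\|B_n\|/k_n^d\to 0$, bound $b_{n,2}^m$ directly by \eqref{eq:CheckingAI_regVarFields_2}, and observe that $b_{n,3}(f)$ vanishes for large $n$ by the independence assumption~(ii). The only cosmetic difference is that the paper verifies $b_{n,1}^m,b_{n,2}^m\to 0$ for all $m$ and invokes the ``in particular'' clause of Proposition~\ref{prop:AI_condition}, whereas you fix $f$ first and choose the corresponding $m$; the estimates themselves are identical.
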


\begin{proof}
First, observe that for any sequence $\epsilon_m\searrow 0$ sets $K_m'=[0,1]^d \times \{\bx\in \loo : \|\bx\|>\epsilon_m\}$, $m\in \N$, form a base for the family of bounded sets of $[0,1]^d\times \loo$. Next, regardless of ordering of $I_n=\{1,\dots,k_n\}^d$, $|\tilde{B}_n(\bi)|\leq |B_{n}(\bi)| $ for all $\bi \in I_n$. 
Since $\tilde{\bX}_{n,\bi}$'s are identically distributed, 
\begin{align*}
b_{n,1}^m &= \sum_{\bi\in I_n}\sum_{\bj\in \tilde{B}_n(\bi)} \pr((\bi/k_n, \tilde{\bX}_{n,\bi}/a_n) \in K_m')\cdot \pr((\bj/k_n, \tilde{\bX}_{n,\bj}/a_n) \in K_m') \\
&\leq k_n^d \|B_n\| \pr(\|\tilde{\bX}_{n,\bone}\|>a_n \epsilon_m)^2 \, .
\end{align*} 
In view of (\ref{eq:CheckingAI_regVarFields_1}), 
$\limsup_{n\toi}b_{n,1}^m\leq (const.) \limsup_{n\toi} \|B_n\|/k_n^d = 0$ for all $m\in \N$. Similarly, (\ref{eq:CheckingAI_regVarFields_2}) implies that $\lim_{n\toi} b_{n,2}^m = 0$ for all $m\in \N$, and by \ref{it:CheckingAI}, $b_{n,3}(f) =0$ for every measurable function $f\geq 0$ on $[0,1]^d\times \loo$ and $n$ big enough.  Applying Proposition \ref{prop:AI_condition} finishes the proof.
\end{proof}

\section{Sequence alignment problem} \label{sec:PrThmAl}
This section is devoted to the proof of \Cref{thm:PPconv_Alignments_intro}. We will use the notation introduced in Section \ref{sec:locSeqAlig_intro} and assume throughout that Assumptions \ref{hypo:negative_drift} and \ref{hypo:E'} hold. 
In particular, $(A_i)_{i\in \Z}$ and $(B_i)_{i\in \Z}$ are independent i.i.d.\ sequences, $S_{i,j}^m=\sum_{k=0}^{m-1}s(A_{i-k},B_{j-k})$ for $i,j\in \Z$ and $m\geq 0$, and 
$S_{i,j}=\sup\{ S_{i,j}^m : m\geq 0 \}$ for $i,j \in \Z$.  

For some of the key technical results in our analysis we are indepted to Hansen~\cite{hansen:2006} who even allows sequences $(A_i)$ and $(B_i)$ to be Markov chains. In the i.i.d.\ setting the corresponding proofs, which rely on change of measure arguments, are much less involved. For an alternative approach based on combinatorial arguments see Dembo et. al.~\cite{dembo:karlin:zeitouni:1994}.

\subsection{The tail field}
Consider the positive stationary field $\bX=(X_{i,j}:i,j\in\Z)$ defined by
$$X_{i,j}=e^{S_{i,j}}, \: i,j\in\Z \, .$$
Observe that by \eqref{eq:exp_tail}, for  $\ts>0$ satisfying $\ex[e^{\ts s(A,B)}]=1$,
\begin{equation}\label{eq:reg_var_tail}
\pr(X_{i,j}>u)\sim \tailcons u^{-\ts}, \, \text{ as } u\toi \, ,
\end{equation}
i.e.\ the marginal distribution of $\bX$ is regularly varying. Moreover, the transformed  field  $\bX$ has a tail field and therefore fits into the framework of Section \ref{sec:RegVarF}.


\begin{proposition}\label{prop:multivariate_reg_var}
The field $\bX$ is regularly varying with tail index $\ts$ and with the spectral tail field $\bTheta=(\Theta_{i,j}:i,j\in\Z)$ satisfying 
\begin{itemize}
\item[(i)] $\Theta_{i,j}=0$ for $i,j\in\Z$, $i\neq j$.
\item[(ii)] $\Theta_{m,m}=e^{S_{m}^{\bz}}$ for $m\in \Z$, 
where $S^\bz _ 0 = 0$ and 
\begin{align*}
   S^\bz_m = \sum_{i=1}^m \bz_i\,, \mbox{ for } m \geq 1 \quad \mbox{ and  }   
   S^\bz_m = - \sum_{i=1}^{-m}  \bz^*_i\,, \mbox{ for } m \leq -1\,,
\end{align*}
for independent i.i.d.\ sequences
$ (\bz_i)_{i \geq 1}$ and $ (\bz^*_i)_{i \geq 1}$ whose distributions correspond to
the distributions of $s(A,B)$ under the product measure $\mu_A \times \mu_B$ and 
under the tilted measure $\mus$ from (\ref{eq:tilted_measure}), respectively.
\end{itemize}

\end{proposition}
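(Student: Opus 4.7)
The plan is to invoke \Cref{thm:tail_process} combined with \Cref{rem:convToSpectralIsSuff}. The one-dimensional regular variation $\pr(X_{\bo}>u)\sim \tailcons u^{-\ts}$ is immediate from \eqref{eq:exp_tail}, so it suffices to produce a field $(\Theta_{i,j})_{(i,j)\in \Z^2}$ with $|\Theta_{\bo}|=1$ such that, for every finite $I\subseteq \Z^2$, $X_{\bo}^{-1}(X_{i,j})_{(i,j)\in I} \mid X_{\bo}>u \dto (\Theta_{i,j})_{(i,j)\in I}$ as $u\toi$. Since $X_{i,j}/X_{\bo}=e^{S_{i,j}-S_{0,0}}$ and $\{X_{\bo}>u\}=\{S_{0,0}>\log u\}$, the task reduces to determining the joint conditional limit of the increments $(S_{i,j}-S_{0,0})_{(i,j)\in I}$ under this conditioning.

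First I would handle the off-diagonal entries. For each $(i,j)\in I$ with $i\neq j$ the target is $e^{S_{i,j}-S_{0,0}} \mid S_{0,0}>\log u \Pto 0$, which forces $\Theta_{i,j}=0$. Equivalently one needs the joint-tail estimate $\pr(S_{i,j}>\log u - M,\,S_{0,0}>\log u)=o(\pr(S_{0,0}>\log u))$ for every fixed $M>0$. This decoupling of two distinct diagonals is precisely the content of \Cref{hypo:E'}: the entropy gap $H(\mus|\mu_A\times\mu_B)>2\max\{H(\mus_A|\mu_A),H(\mus_B|\mu_B)\}$ ensures that, in the large-deviation sense, the cost of simultaneously exceeding a large level along two diagonals strictly exceeds that of a single diagonal excursion. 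I would import the corresponding estimate from Dembo et al.~\cite{dembo:karlin:zeitouni:1994} (or Hansen~\cite{hansen:2006}), where it is proved by exponential change of measure combined with this entropy inequality; this is the technical heart of the argument and the step I expect to be the main obstacle.

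Next I would handle the diagonal entries $(m,m)$ by unfolding the Lindley recursion~\eqref{eq:Lind} along the main diagonal. Letting $\tau=\inf\{k\geq 0: S_{0,0}^k=S_{0,0}\}$ denote the argmax defining $S_{0,0}$, a direct induction shows that, on an event of asymptotically full probability under the conditioning $S_{0,0}>\log u$,
\[
S_{m,m}-S_{0,0}=\sum_{i=1}^m s(A_i,B_i)\ \ (m\geq 1), \qquad S_{-k,-k}-S_{0,0}=-\sum_{j=1}^k s(A_{1-j},B_{1-j})\ \ (k\geq 1).
\]
The forward increments $s(A_i,B_i)$, $i\geq 1$, are i.i.d.\ under $\mu_A\times\mu_B$ and independent of $\sigma(A_\ell,B_\ell:\ell\leq 0)$, hence of the conditioning event; they supply the i.i.d.\ sequence $(\bz_i)_{i\geq 1}$. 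For the backward increments I would invoke the classical Cram\'er--Lundberg theory applied to the walk $(S_{0,0}^k)_{k\geq 0}$: the exponential change of measure with tilt $e^{\ts x}$ yields, under $S_{0,0}>\log u$ with $u\toi$, both $\tau\Pto\infty$ and joint convergence in distribution of $(s(A_0,B_0),s(A_{-1},B_{-1}),\dots,s(A_{1-k},B_{1-k}))$ to an i.i.d.\ sample of size $k$ from $\mus$; see e.g.~\cite[Part C, XIII.5]{asmussen:2003}. This supplies the i.i.d.\ sequence $(\bz_i^*)_{i\geq 1}$.

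Assembling these pieces, for every finite $I\subseteq \Z^2$ one obtains $(S_{i,j}-S_{0,0})_{(i,j)\in I} \mid S_{0,0}>\log u \dto (\log\Theta_{i,j})_{(i,j)\in I}$ with the convention $\log 0=-\infty$ and $\Theta_{i,j}$ as in the statement; exponentiating yields the required spectral tail convergence on the encompassing set $\Z^2$, and \Cref{rem:convToSpectralIsSuff} combined with \Cref{thm:tail_process} identifies $\bX$ as a regularly varying field with tail index $\ts$ and spectral tail field as described.
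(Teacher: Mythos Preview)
Your proposal is correct and covers the same ground as the paper, but the treatment of the negative diagonal $(\Theta_{-m,-m})_{m\geq 1}$ is genuinely different.

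The paper establishes the spectral convergence only on the encompassing set $\Z^2\setminus\{(m,m):m\leq -1\}$: the forward diagonal is handled via the Lindley recursion and~\cite[Theorem 2.3]{segers:2007}, and the off-diagonal via Lemmas~\ref{lem:1} and~\ref{lem:2}. \Cref{rem:convToSpectralIsSuff} then yields regular variation and a spectral field $\bTheta'$ agreeing with the claimed $\bTheta$ on that set. The identification $\Theta'_{-m,-m}=e^{S^{\bz}_{-m}}$ with tilted increments is obtained \emph{algebraically} from the time-change formula~\eqref{eq:time-change0}, using only $\ex[e^{\ts s(A,B)}]=1$. You instead work directly on all of $\Z^2$: unravelling the Lindley recursion backwards gives $S_{-k,-k}-S_{0,0}=-S_{0,0}^k$ on an event of conditional probability tending to one, and then the Cram\'er--Lundberg asymptotics $\pr(S_{0,0}>u)\sim C e^{-\ts u}$ combined with the independence of increments show that the first $k$ increments of the walk $(S_{0,0}^m)_m$, conditional on $\{S_{0,0}>\log u\}$, converge to an i.i.d.\ sample from $\mus$. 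Both routes are valid; the paper's illustrates that the abstract time-change formula already determines the backward spectral field from the forward one with no further probabilistic input, while yours is more self-contained and makes the emergence of the tilted law transparent, at the price of redoing by hand a special instance of~\eqref{eq:time-change0}.

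One small point on the off-diagonal step: the paper does not simply quote a joint-tail bound for $(S_{0,0},S_{i,j})$. It first truncates the supremum defining $S_{i,j}$ to indices $m\leq c_0\log u$ via \Cref{lem:1}, and only then applies the uniform bound of \Cref{lem:2} summed over at most $O((\log u)^2)$ pairs $(m,l)$. Your phrase ``import the corresponding estimate'' covers this, but be aware that the truncation is what makes the union bound go through.
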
 

Before proving \Cref{prop:multivariate_reg_var} we give one expression for the constant $\vartheta$ and one representative of the anchored spectral tail process $\tilde{\bQ}$, both defined in \Cref{subs:anchor}.

\begin{corollary}
The tail field $\bY$ of $\bX$ satisfies $\pr(\bY\in l_0)=1$ with 
\begin{align}\label{eq:extremalIndex_align}
\vartheta=\pr(\Gamma + \max_{m\geq 1} S_{m}^{\bz}\leq 0)>0 \, ,
\end{align}
where $\Gamma$ is independent of $(S_m^{\bz})_{m\geq 1}$ and satisfies $\pr(\Gamma \geq x)=e^{-\ts x}$, $x\geq 0$. A representative $\bQ=(Q_{i,j})_{i,j\in \Z}$ of the anchored spectral tail process $\tilde{\bQ}$ is given by 
\begin{align}\label{eq:Q's_align}
Q_{i,j}=0 \: \text{ for } i\neq j\, , \: (Q_{m,m})_{m\in \Z} \eind \big(e^{S_{m}^{\bz}}, m\in \Z \, \mid \, \sup_{m\leq -1}S_{m}^{\bz} <0 , \, \sup_{m\geq 1}S_{m}^{\bz} \leq 0\big) \, .
\end{align}
\end{corollary}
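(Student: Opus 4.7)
My first step is to make the tail field explicit. \Cref{prop:multivariate_reg_var} already gives the spectral tail process $\bTheta$, and combining with the standard decomposition $\bY = |Y_{\bo}|\cdot\bTheta$ with $|Y_{\bo}|$ Pareto$(\ts)$ independent of $\bTheta$, I get $Y_{i,j}=0$ for $i \neq j$ and $Y_{m,m} = |Y_{\bo}|\, e^{S_m^{\bz}}$. To verify $\pr(\bY \in l_0)=1$ I therefore only need $S_m^{\bz} \to -\infty$ as $|m|\to\infty$, almost surely. In the forward direction this is the strong law applied to $(\bz_i)$, whose mean $\ex[s(A,B)]$ is negative by \Cref{hypo:negative_drift}. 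In the backward direction it suffices to show $\ex^*[s(A,B)] > 0$ under the tilted measure $\mus$; this follows from convexity of $\theta \mapsto m(\theta) = \ex[e^{\theta s(A,B)}]$ together with $m(0)=m(\ts)=1$ and $m'(0) < 0$, which force $m'(\ts) > 0$, so that $\sum_{i=1}^{|m|} \bz_i^* \to +\infty$.

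Once $\pr(\bY \in l_0)=1$ is in place, the positivity $\vartheta > 0$ is immediate from the lemma in \Cref{subs:anchor} asserting $\pr(A(\bY)=\bo) > 0$ for every anchoring function. For the explicit formula, I specialize \eqref{eq:extremal_index_exceedance} to the lexicographic group order $\lleq_l$ in its last-exceedance form, yielding $\vartheta = \pr(\sup_{\bj \succ_l \bo} |Y_{\bj}|\leq 1)$. Because the off-diagonal coordinates of $\bY$ vanish, only the diagonal contributes, and $(m,m) \succ_l \bo$ iff $m\geq 1$; the event reduces to $\{|Y_{\bo}|\sup_{m\geq 1} e^{S_m^{\bz}} \leq 1\}$. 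Writing $\Gamma = \log|Y_{\bo}|$, which is $\mathrm{Exp}(\ts)$ and independent of $(S_m^{\bz})_m$ since $|Y_{\bo}|$ is independent of $\bTheta$, I obtain $\vartheta = \pr(\Gamma + \max_{m\geq 1} S_m^{\bz} \leq 0)$.

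For the representative of $\tilde{\bQ}$ I invoke \Cref{lem:spectral_Z}, by which, as an element of $\lo$, $\tilde{\bQ} \eind \bTheta \mid A^{fm}(\bTheta)=\bo$. Plugging in the explicit $\bTheta$ from \Cref{prop:multivariate_reg_var}, off-diagonal values are $0 < 1$ and cause no issue; the conditioning event reduces to the diagonal requirement that $\bo$ be the lex-first position where $|\Theta_{\bj}|$ attains the common maximum value $1$. This in turn translates to the weak inequality $\sup_{m\geq 1} S_m^{\bz} \leq 0$ for lex-later diagonal indices and the strict inequality $\sup_{m\leq -1} S_m^{\bz} < 0$ for lex-earlier ones. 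The resulting conditional law of $(\Theta_{m,m})_m = (e^{S_m^{\bz}})_m$ is precisely the claim, and $Q_{i,j} = 0$ off the diagonal is carried through unchanged.

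The main obstacle I anticipate is not deep: it amounts to (i) signing the drift of the tilted walk $(\bz_i^*)$, handled by the MGF-convexity argument, and (ii) translating the lex-order definitions of $A^{le}$ and $A^{fm}$ into the correct mix of weak and strict inequalities for $S_m^{\bz}$ after discarding the identically-zero off-diagonal coordinates. Everything else is a direct application of \Cref{prop:multivariate_reg_var} and the general anchoring machinery of \Cref{subs:anchor}.
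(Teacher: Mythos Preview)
Your proposal is correct and follows essentially the same route as the paper: both use the decomposition $\bY=|Y_{\bo}|\,\bTheta$ from \Cref{prop:multivariate_reg_var}, the MGF-convexity argument to show the tilted drift is positive (hence $\bY\in l_0$ a.s.), the last-exceedance form of \eqref{eq:extremal_index_exceedance} for the formula for $\vartheta$, and \eqref{eq:Q's} for the representative of $\tilde{\bQ}$. You spell out in more detail than the paper the translation of $\{A^{fm}(\bTheta)=\bo\}$ into the strict/weak inequalities on $S_m^{\bz}$, which the paper leaves implicit.
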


\begin{proof}

The tail field $ \bY=(Y_{i,j})_{i,j\in \Z}$ of $\bX$ is given by  $Y_{i,j}=Y \cdot  \Theta_{i,j}$ where $Y$ satisfies $\pr(Y\geq y)=y^{-\ts}$ for $y \geq 1$ and is independent from $\bTheta$. Observe, $\ex[\bz_1]=\ex[s(A,B)]<0$ and since the moment generating function $m(\alpha)=\ex[e^{\alpha s(A,B)}]$ is strictly convex and $m(0)=m(\ts)=1$, 
\begin{align*}
\ex[\bz_1^*]=\ex[s(A,B)e^{\ts s(A,B)}]=\tfrac{\mathrm{d}m}{\mathrm{d}\alpha}(\ts)>0 \, .
\end{align*} 
This implies that $\pr(\lim_{|m|\toi}S_{m}^\bz = -\infty)=1$ so $\bTheta$ and $\bY$ are elements of $l_0$ almost surely.
In particular, by (\ref{eq:extremal_index}),
\begin{align*}
0<\vartheta=\pr(\sup_{(i,j) \lgreater (0,0)} Y_{i,j}\leq 1)=\pr(Y\max_{m\geq 1}\Theta_{m,m}\leq 1)=\pr(\log Y + \max_{m\geq 1} S_{m}^{\bz}\leq 0) \, ,
\end{align*}
where $\log Y$ is a standard exponential random variable with index $\ts$. This yields (\ref{eq:extremalIndex_align}) and (\ref{eq:Q's_align}) follows directly from (\ref{eq:Q's}).
\end{proof}

To prove Proposition \ref{prop:multivariate_reg_var} we need two auxiliary lemmas. The first one is a rough estimate using Markov inequality, see Section \ref{sub:alignments_appendix} for the proof.
\begin{lemma}\label{lem:1}
There exist a constant $c_0>0$ such that
\[
\lim_{u\toi}e^{2\ts u} \pr\left(\max_{m > c_0 u}S_{0,0}^m \geq 0\right)=0 \, .
\]
\end{lemma}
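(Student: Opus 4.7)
The plan is to apply a Chernoff-type bound to each $\pr(S_{0,0}^m\geq 0)$ individually and then sum via the union bound. The key structural input is the shape of the moment generating function $m(\alpha)=\ex[e^{\alpha s(A,B)}]$: by Assumption~\ref{hypo:negative_drift} it is strictly convex, with $m(0)=1$, $m'(0)=\ex[s(A,B)]<0$, and $m(\ts)=1$. Strict convexity therefore forces $m(\alpha)<1$ for every $\alpha\in(0,\ts)$.

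First I would fix any such $\alpha$ (for concreteness, $\alpha=\ts/2$) and set $c_\alpha:=-\log m(\alpha)>0$. Since $S_{0,0}^m$ is a sum of $m$ i.i.d.\ copies of $s(A,B)$, Markov's inequality gives
\[
\pr\bigl(S_{0,0}^m\geq 0\bigr)\leq \ex\bigl[e^{\alpha S_{0,0}^m}\bigr]=m(\alpha)^m=e^{-c_\alpha m}
\]
for every $m\geq 1$. A union bound and a geometric series estimate then yield, for some constant $C>0$ independent of $u$,
\[
\pr\Bigl(\max_{m>c_0 u}S_{0,0}^m\geq 0\Bigr)\leq \sum_{m>\lfloor c_0 u\rfloor}e^{-c_\alpha m}\leq C\,e^{-c_\alpha c_0 u}.
\]

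Multiplying by $e^{2\ts u}$ gives a bound of order $e^{(2\ts-c_\alpha c_0)u}$, which tends to $0$ as $u\toi$ provided one chooses $c_0>2\ts/c_\alpha$. Taking any such $c_0$ proves the claim. There is no real obstacle: the only point requiring care is to pick $\alpha$ strictly inside $(0,\ts)$ so that $m(\alpha)<1$, and to take $c_0$ large enough relative to $\ts/c_\alpha$ to beat the $e^{2\ts u}$ prefactor. Everything else is a standard one-sided Cramér estimate, and no special input from the alignment structure is needed beyond the Lundberg equation~\eqref{eq:cgf} and the negative-drift assumption.
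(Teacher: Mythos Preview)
Your proof is correct and essentially identical to the paper's own argument: both fix a parameter strictly between $0$ and $\ts$ so that the moment generating function is strictly less than one, apply Markov's inequality termwise, sum the resulting geometric series, and then choose $c_0$ large enough (namely $c_0>-2\ts/\log m(\alpha)$, which is your $c_0>2\ts/c_\alpha$) to beat the $e^{2\ts u}$ prefactor.
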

Before we state the second lemma, observe first that, using $\ex[e^{\ts s(A,B)}]=1$, for all $u\geq 0$ and any integer $m\geq 0$,
\begin{align*}
\pr(S_{0,0}^m\geq u)=\ex[e^{-\ts S_{0,0}^m} e^{\ts S_{0,0}^m} \ind{S_{0,0}^m\geq u}] \leq e^{-\ts u} \pr^*(S_{0,0}^m\geq u) \leq e^{-\ts u} \, ,
\end{align*}
where the tilted measure $\pr^*$ makes pairs $(A_k,B_k)$ for $k=-m+1,\dots,0$, independent and distributed according to the measure $\mu^*$. The following result is proved in \cite[Lemma 5.11]{hansen:2006} using change of measure arguments and the Azuma-Hoeffding inequality for martingales. The key fact is that, whenever $\mu^*\neq \mu_A^*\times \mu_B^*$ (which holds under (\ref{eq:E'})), 
\begin{align*}
\ex_{\nu_A \times \nu_B}[s(A,B)]< \ex_{\mu^*}[s(A,B)] 
\end{align*}
for all $\nu_A\in \{\mu_A,\mu_A^*\}$ and $\nu_B\in \{\mu_B,\mu_B^*\}$, where $\ex_{\mu}$ denotes the expectation assuming $(A,B)$ is distributed according to $\mu$, see \cite[beginning of Section 3]{dembo:karlin:zeitouni:1994a}.  The proof of {\cite[Lemma 5.11]{hansen:2006}} is much simpler in the i.i.d.\ setting and can be found in \cite[Lemma 4.2.3]{thesis}.
\begin{lemma}[{\cite[Lemma 5.11]{hansen:2006}}]\label{lem:2}
There exists an $0<\epsilon_0<1$ such that for all $u>0$,
\[
\sup_{\substack{i,j\in\Z,\, i\neq j \\ 
m,l\geq 0}}\pr(S_{0,0}^m>u, S_{i,j}^l>u) \leq 2 e^{-(1+\epsilon_0)\ts u} \, .
\]
\end{lemma}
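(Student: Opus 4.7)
The plan is to deduce the bound by a joint Chernoff / exponential-tilting argument with a parameter $\beta^\star$ strictly less than $\ts$, whose existence is precisely what Assumption~\ref{hypo:E'} provides. The geometric fact to exploit is that for $i\neq j$ the two sums $S^m_{0,0}$ and $S^l_{i,j}$ are supported on two distinct diagonals of the pair lattice, so no pair $(A_p,B_q)$ ever appears in both; yet the two sums may share individual $A$-letters or $B$-letters, and it is precisely this sharing which prevents a naive independence bound of order $e^{-2\ts u}$ from going through.

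First I would fix $(i,j,m,l)$ and decompose the index sets $I_A:=\{-(m-1),\ldots,0\}$, $J_A:=\{i-(l-1),\ldots,i\}$ (and analogously $I_B,J_B$) into shared and unshared parts. For $\beta\in(0,\ts)$ the exponential Markov inequality gives
\[
\pr(S^m_{0,0}>u,\,S^l_{i,j}>u)\;\le\;e^{-2\beta u}\,\ex\bigl[e^{\beta(S^m_{0,0}+S^l_{i,j})}\bigr],
\]
and the joint moment generating function on the right factorizes according to letter sharing. Unshared pairs contribute independent factors equal to $m(\beta)=\ex[e^{\beta s(A,B)}]<1$ for $\beta\in(0,\ts)$ (by strict convexity of $m$ with $m(0)=m(\ts)=1$); a pair sharing only an $A$-letter with the other sum contributes, after conditioning on the shared $A$-letter, a factor $\ex_A\bigl[M_B(\beta;A)^2\bigr]$ with $M_B(\beta;a):=\ex[e^{\beta s(a,B)}]$; similarly for shared $B$-letters. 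Choosing $\beta^\star=\ts(1+\epsilon_0)/2$ will then yield the target exponent $-(1+\epsilon_0)\ts u$, provided the joint moment is bounded by a constant independent of $m,l,i,j$.

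The crux of the argument, and where Assumption~\ref{hypo:E'} enters, is to exhibit an $\epsilon_0\in(0,1)$ for which each of $m(\beta^\star)$, $\ex_A[M_B(\beta^\star;A)^2]$ and $\ex_B[M_A(\beta^\star;B)^2]$ is at most $1$. Rewriting each of these Laplace-type quantities in terms of relative entropies, one finds that their logarithmic rate at $\beta=\ts$ equals $H(\mus\mid\mu_A\times\mu_B)$ for the single-pair factor and $2H(\mus_A\mid\mu_A)$, $2H(\mus_B\mid\mu_B)$ for the shared-letter factors; condition~(E') is precisely the strict inequality $H(\mus\mid\mu_A\times\mu_B)>2\{H(\mus_A\mid\mu_A)\vee H(\mus_B\mid\mu_B)\}$, which by continuity persists on an open interval $\beta\in(\ts(1+\epsilon_0)/2,\,\ts)$ and thereby furnishes $\beta^\star$. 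The factor $2$ in the stated bound absorbs a short-sum boundary correction (for very small $m,l$), while Lemma~\ref{lem:1} disposes of the complementary regime $m\vee l>c_0u$. The main obstacle, and the only non-routine step, is this entropy identification: Hansen~\cite[Lemma~5.11]{hansen:2006} carries it out via an exponential martingale together with Azuma--Hoeffding in the Markovian setting, while in the i.i.d.\ case one can reduce directly to manipulations with the tilted marginals of $\mus$.
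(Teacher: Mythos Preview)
Your approach differs from the one the paper attributes to Hansen (change of measure to $\mus$ followed by Azuma--Hoeffding, with the key ingredient being the mean comparison $\ex_{\nu_A\times\nu_B}[s(A,B)]<\ex_{\mus}[s(A,B)]$ for $\nu_A\in\{\mu_A,\mu_A^*\}$, $\nu_B\in\{\mu_B,\mu_B^*\}$). Your direct Chernoff bound is a reasonable idea, but the sketch has a genuine gap at the factorization step.

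The issue is combinatorial. For $i\neq j$ the two diagonals never share a \emph{pair} $(A_p,B_q)$, but a single term $s(A_{-k},B_{-k})$ from the first sum can share its $A$-letter with one term of the second sum \emph{and} its $B$-letter with a different term of the second sum. Concretely, with $i-j=1$ the dependency graph on the score terms is one long path
\[
(A_1,B_0)\,-\,(A_0,B_0)\,-\,(A_0,B_{-1})\,-\,(A_{-1},B_{-1})\,-\,(A_{-1},B_{-2})\,-\,\cdots
\]
alternating between shared $B$-letters and shared $A$-letters. Consequently $\ex[e^{\beta(S^m_{0,0}+S^l_{i,j})}]$ does \emph{not} factor into a product of the three building blocks $m(\beta)$, $\ex_A[M_B(\beta;A)^2]$, $\ex_B[M_A(\beta;B)^2]$; the contribution of a path of length $k$ is a genuinely $k$-fold integral that does not split. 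Any attempt to force a product bound via Cauchy--Schwarz collapses to $m(2\beta)^{(m+l)/2}$, which is $\le 1$ only for $\beta\le\ts/2$ and hence gives at best $e^{-\ts u}$, not $e^{-(1+\epsilon_0)\ts u}$.

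A second, smaller problem: the entropy identification is not right as stated. At $\beta=\ts$ one has $M_B(\ts;a)=\mus_A(a)/\mu_A(a)$, so $\ex_A[M_B(\ts;A)^2]=\sum_a \mus_A(a)^2/\mu_A(a)$, a $\chi^2$-type quantity rather than $e^{2H(\mus_A\mid\mu_A)}$; condition~(E') does not directly control it. What actually links (E') to the problem is the \emph{mean} inequality recalled in the paper just before the lemma, and that is what drives the change-of-measure proof: after tilting both sums by $\ts$, the residual event has strictly negative drift under every relevant product of marginals, and Azuma--Hoeffding converts this into the extra factor $e^{-\epsilon_0\ts u}$ uniformly in $m,l,i,j$.
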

\begin{proof}[Proof of Proposition \ref{prop:multivariate_reg_var}]
Let 
$\bTheta$ be from the statement of the proposition. We first show that, as $u\toi$, 
\begin{equation}\label{eq:fidi}
X_{0,0}^{-1}\bX_{I} \;\big| \;X_{0,0}>u \dto \bTheta_I \,,
\end{equation}
for all $I\subseteq \Z^2\setminus{\{ (m,m) : m \leq -1\}}$. Since $X_{0,0}$ is regularly varying with index $\ts$, this will prove the regular variation property of $\bX$ and show that the spectral tail field $\bTheta'=(\Theta_{i,j}')_{i,j\in \Z}$ of $\bX$ satisfies 
\begin{align}\label{eq:tailFieldProof_inter0}
(\Theta_{i,j}' : (i,j)\in \Z^2\setminus{\{ (m,m) : m \leq -1\}})\eind (\Theta_{i,j} : (i,j)\in \Z^2\setminus{\{ (m,m) : m \leq -1\}})\, ,
\end{align}
see Remark \ref{rem:convToSpectralIsSuff}. 

Observe, by (\ref{eq:Lind}), for each $m \geq 1$,
\begin{align*}
X_{m,m}=\max\{X_{m-1,m-1}e^{s(A_m,B_m)},1\} \, .
\end{align*}
Now since $X_{0,0}$ is regularly varying and independent of the i.i.d.\ sequence $(e^{s(A_k,B_k)})_{k\geq 1}$, \cite[Theorem 2.3]{segers:2007} implies that for all $m\geq 0$, as $u \toi$,
\begin{align*}
X_{0,0}^{-1}\left(X_{0,0}, X_{1,1},\dots,X_{m,m}\right)\; \big| \; X_{0,0}>u \dto  &\big(1, e^{s({A}_1,{B}_1)},\dots, \prod_{k=1}^{m} e^{s({A}_k,{B}_k)}\big)\\
 &\eind (\Theta_{0,0},\Theta_{1,1},\dots, \Theta_{m,m})\, .
\end{align*}
Since $\Theta_{i,j}=0$ for all $i,j \in \Z$, $i\neq j$, (\ref{eq:fidi}) will follow if we show that for all such $i,j$, 
\begin{align}
\pr(X_{i,j}>X_{0,0}\eta \mid X_{0,0}>u)&\leq\pr(X_{i,j}>u\eta \mid X_{0,0}>u)\nonumber \\
&=\pr(S_{i,j}>\log u +\log \eta \mid S_{0,0}>\log u) \to 0 \, , \text{ as } u\toi\, , \label{al:tailFieldProof_inter1}
\end{align}
for all $\eta\in (0,1)$.

Fix now $i, j\in \Z$ such that $i\neq j$. Using (\ref{eq:exp_tail}) and Lemmas \ref{lem:1} and \ref{lem:2}, for every $M\geq 0$,
\begin{multline*}
\limsup_{u\toi}\pr(S_{i,j}>u-M \mid S_{0,0}>u)=\limsup_{u\toi}C^{-1} e^{\ts u} \pr(S_{0,0}>u, S_{i,j}>u-M)\\
\le \limsup_{u\toi} C^{-1} e^{\ts u} \pr\left(\max_{1\leq m \leq c_0 u}S_{0,0}^m >u-M,\max_{1\leq l \leq c_0 u}S_{i,j}^l >u-M\right)\\
\leq \limsup_{u\toi}2 C^{-1} e^{(1+\epsilon_0)\ts M} (c_0 u)^2 e^{-\epsilon_0\ts u} = 0\, ,
\end{multline*}
hence (\ref{al:tailFieldProof_inter1}) holds.

Finally, we extend (\ref{eq:tailFieldProof_inter0}) to equality in distribution on whole $\R^{\Z^2}$. 
First, fix $m\geq 1$ and note that by (\ref{eq:time-change0}) and $\ex[e^{\ts s(A,B)}]=1$,
$$\pr(\Theta_{-m,-m}'> 0)=\ex[\Theta_{m,m}^{\ts}]=1 \, .$$ 
Further, for arbitrary bounded measurable function $h:\R^{2m+1}\to \R$, using (\ref{eq:time-change0}) and (\ref{eq:tailFieldProof_inter0}),
\begin{multline*}
\ex[h(\Theta_{-m,-m}',\dots,\Theta_{m,m}')] 
= \ex[h(\Theta_{m,m}^{-1}(\Theta_{0,0},\dots,\Theta_{2m,2m})) \Theta_{m,m}^{\ts}] \\
=\ex\big[h(e^{-\sum_{k=1}^m \bz_{k}},e^{-\sum_{k=2}^m \bz_{k}},\dots,e^{\bz_m},1,e^{\bz_{m+1}},\dots, e^{\sum_{k=m+1}^{2m}\bz_{k}})\prod_{k=1}^m e^{\ts \bz_{k}}\big] \, .
\end{multline*}
By definition of $(\Theta_{k,k})_{k\in \Z}$, this implies that 
\begin{align*}
\ex[h(\Theta_{-m,-m}',\dots,\Theta_{m,m}')] 
=\ex[h(\Theta_{-m,-m},\dots,\Theta_{m,m})] \, .
\end{align*}
\end{proof}

\subsection{Checking the assumptions of Theorem \ref{thm:PPconv}}
\label{sec:align_checkingAssumptions}
In view of \eqref{eq:reg_var_tail}, define the sequence $(a_n)$ by
\begin{equation*}
a_n=(\tailcons n^2)^{{1}/{\ts}}, \; n\in\N \, ,
\end{equation*}
so that $\lim_{n\toi} n^2 \pr(X_{0,0}>a_n)=1$.
The proof of the following result is postponed to Section \ref{sub:ACforAlignments}.
\begin{proposition}\label{prop:assumptions}
The random field $\bX$ satisfies Assumption \ref{hypo:AC} for every sequence of positive integers $(r_n)$ such that $\lim_{n\toi} r_n=\infty$ and $\lim_{n\toi}r_n /n^\epsilon =0$ for all $\epsilon>0$.  
\end{proposition}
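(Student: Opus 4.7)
The plan is to verify the anticlustering condition \Cref{eq:AC}. Set $u_n := \log(a_n u)$, so that $\ts u_n = 2\log n + O(1)$ and $\pr(X_{\bo} > a_n u) \sim \tailcons e^{-\ts u_n}$ is of order $n^{-2}$ by \Cref{eq:exp_tail}. I split the indices $\{\bi : m<|\bi|\leq r_n\}$ into an \emph{off-diagonal} part $\{(i,j) : i \neq j,\ |\bi|\leq r_n\}$ and a \emph{diagonal} part $\{(k,k) : m<|k|\leq r_n\}$, and bound the contribution of each to the conditional probability in \Cref{eq:AC}.

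For the off-diagonal part, use a union bound over the $O(r_n^2)$ such indices. For each $i\neq j$, write
$\pr(S_{0,0}>u_n, S_{i,j}>u_n) \leq \pr(\max_{l\leq c_0 u_n} S^l_{0,0}>u_n,\ \max_{l'\leq c_0 u_n} S^{l'}_{i,j}>u_n) + 2\pr(\max_{l > c_0 u_n}S^l_{0,0}>0)$ (the last term using stationarity). \Cref{lem:1} bounds the second term by $o(e^{-2\ts u_n})$, while a further union bound combined with \Cref{lem:2} yields
$$\pr\Big(\max_{l\leq c_0 u_n}S^l_{0,0}>u_n,\ \max_{l'\leq c_0 u_n}S^{l'}_{i,j}>u_n\Big)\leq 2(c_0 u_n)^2 e^{-(1+\epsilon_0)\ts u_n}.$$
Summing over off-diagonal $\bi$ and dividing by $\pr(X_{\bo}>a_n u)$ produces a bound of order $r_n^2(\log n)^2 n^{-2\epsilon_0}+o(r_n^2 n^{-2})$, which vanishes as $n\to\infty$ upon choosing $\epsilon<\epsilon_0$ with $r_n=o(n^\epsilon)$.

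For the diagonal part, stationarity of $\bX$ along the diagonal gives $\pr(S_{0,0}>u_n, S_{-k,-k}>u_n)=\pr(S_{k,k}>u_n, S_{0,0}>u_n)$, so it suffices to handle $k\geq 1$. Iterating the Lindley recursion \Cref{eq:Lind} yields the key decomposition
$$S_{k,k}=\max\Big(S_{0,0}+S^\bz_k,\; M_k\Big),\qquad M_k:=\max_{0\leq l<k}S^l_{k,k},$$
with $S^\bz_k:=\sum_{i=1}^k s(A_i,B_i)$, i.e.\ the random walk from \Cref{prop:multivariate_reg_var} built from the \emph{positive-index} noise. Both $S^\bz_k$ and $M_k$ depend only on $(A_i,B_i)_{1\leq i\leq k}$ and are therefore independent of $S_{0,0}$. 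Taking the max over $m<k\leq r_n$ gives
$$\pr\Big(\max_{m<k\leq r_n}S_{k,k}>u_n,\,S_{0,0}>u_n\Big)\leq \pr\Big(S_{0,0}+V_{m,n}>u_n,\,S_{0,0}>u_n\Big) + \pr\Big(\max_{1\leq k\leq r_n}M_k>u_n\Big)\pr(S_{0,0}>u_n),$$
where $V_{m,n}:=\max_{m<k\leq r_n}S^\bz_k$. The $M_k$-contribution, divided by $\pr(S_{0,0}>u_n)$, is bounded via $M_k\leq \max_{0\leq l_1<l_2\leq r_n}(S^\bz_{l_2}-S^\bz_{l_1})$ and a union bound over the starting point $l_1$ by $r_n\pr(\sup_{j\geq 1}S^\bz_j>u_n)=O(r_n n^{-2})\to 0$, using Cram\'er--Lundberg for the walk $(S^\bz_j)$ (and noting $\sup_{j\geq 1}S^\bz_j\eind S_{0,0}$ on $\{S_{0,0}>0\}$).

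For the main contribution, decoupling is crucial: since $S_{0,0}$ is independent of $(S^\bz_k)_{k\geq 1}$,
$$\pr\Big(S_{0,0}+V_{m,n}>u_n\,\Big|\,S_{0,0}>u_n\Big)=\pr(V_{m,n}+U_n>0)\leq \pr(V_m+U_n>0),$$
where $V_m:=\sup_{k>m}S^\bz_k$ and $U_n:=(S_{0,0}-u_n)\mid\{S_{0,0}>u_n\}$ is independent of $(S^\bz_k)$. By \Cref{eq:exp_tail}, $U_n\dto\Gamma\sim\mathrm{Exp}(\ts)$, so by independence $(V_m, U_n) \dto (V_m,\Gamma)$, and the Portmanteau theorem (applicable since $V_m+\Gamma$ has a continuous distribution, $\Gamma$ being absolutely continuous) gives $\limsup_n\pr(V_m+U_n>0)=\pr(V_m+\Gamma>0)$. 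Finally, $V_m\to-\infty$ almost surely by the strong law of large numbers ($\ex[s(A,B)]<0$), so dominated convergence yields $\pr(V_m+\Gamma>0)\to 0$ as $m\to\infty$. The main obstacle is precisely the commutation of the two limits $n\to\infty$ and $m\to\infty$; it is enabled by the decoupling of $U_n$ from $(S^\bz_k)$, which in turn rests on the disjoint index supports of $S_{0,0}$ and $S^\bz_k$.
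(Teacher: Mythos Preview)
Your argument is correct. The off-diagonal part matches the paper's proof essentially verbatim: truncate the walk length via \Cref{lem:1}, then apply \Cref{lem:2} together with a union bound over the $O(r_n^2)$ indices and $O((\log n)^2)$ walk lengths.

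On the diagonal the two proofs diverge. The paper, after the same Lindley decomposition $X_{k,k}=\max\{e^{\max_{0\leq l\leq k}S_{k,k}^l},\,X_{0,0}e^{S_{k,k}^k}\}$, handles the second term by a \emph{moment bound}: fixing $0<\lambda_0<\ts$ and applying Markov's inequality yields
\[
\sum_{k>m}\pr(X_{0,0}e^{S_{k,k}^k}>a_nu\mid X_{0,0}>a_nu)\leq \frac{\ex[X_{0,0}^{\lambda_0}\ind{X_{0,0}>a_nu}]}{(a_nu)^{\lambda_0}\pr(X_{0,0}>a_nu)}\sum_{k>m}m(\lambda_0)^k,
\]
after which Karamata's theorem controls the first factor by $\ts/(\ts-\lambda_0)$ and the geometric tail vanishes as $m\to\infty$. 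Your route instead exploits the \emph{distributional} structure: the overshoot $U_n=(S_{0,0}-u_n)\mid\{S_{0,0}>u_n\}$ converges weakly to an $\mathrm{Exp}(\ts)$ variable $\Gamma$ independent of the forward walk, so that $\limsup_n\pr(S_{0,0}+V_{m,n}>u_n\mid S_{0,0}>u_n)\leq \pr(V_m+\Gamma>0)$, which tends to $0$ because $V_m\downarrow-\infty$ almost surely. This is a genuinely different mechanism. The paper's argument is quantitatively sharper (it gives explicit geometric decay in $m$ uniformly in $n$) and avoids any weak-convergence step; your argument is shorter, needs no truncated-moment/Karamata machinery, and has the pleasant feature that the limiting pair $(V_m,\Gamma)$ is exactly the object appearing in the expression \eqref{eq:extremalIndex_align} for the extremal index $\vartheta$, so the anticlustering verification and the identification of $\vartheta$ become visibly the same calculation.
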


Take now two sequences of positive integers $(l_n)$ and $(r_n)$ such that 
\[
\lim_{n\toi}\log n /l_n =\lim_{n\toi} l_n / r_n = \lim_{n\toi} r_n/n^\epsilon = 0  
\]
for all $\epsilon>0$ and set $k_n=\lfloor n/r_n \rfloor$. Recall the blocks of indices $J_{n,\bi}\subseteq \{1,\dots,k_n r_n\}^2$ of size $r_n^2$ from (\ref{eq:blocks_indices}) and the blocks $\bX_{n,\bi}:=\bX_{J_{n,\bi}}$ for $\bi \in I_n :=\{1,\dots, k_n\}^2$. To show that the $\bX_{n,\bi}$'s satisfy the asymptotic independence condition from Theorem \ref{thm:PPconv}, we will apply \Cref{prop:CheckingAI_regVarFields}. However, to use it we first need to alter the original blocks.

First, cut off the edges of the $J_{n,\bi}$'s by $l_n$, more precisely,  define
\begin{align*}
\tilde{J}_{n,\bi}:= \{(i,j) : (\bi-\bone)\cdot r_n+ \bone \leq (i,j) \leq \bi \cdot r_n - l_n\cdot \bone \} , \: \bi \in I_n \, .
\end{align*}

Further, for all $i,j\in \Z$ and $m\in \N$ let $\varepsilon_{i,j}^m$ be the empirical measure on $E^2$ of the sequence $(A_{i-k},B_{j-k})$, $k=0,\dots,m-1$, i.e.\ 
\[
\varepsilon_{i,j}^m=\frac{1}{m}\sum_{k=0}^{m-1} \delta_{(A_{i-k},B_{j-k})} \, .
\]
For every $\eta>0$ denote by $B_\eta$ the set of all probability measures $\nu$ on $E^2$ satisfying $||\nu - \mu^*||:=\sum_{a,b\in E}|\nu(a,b)-\mu^*(a,b)|<\eta$. 

Set $b_n=\log a_n$ for all $n\in \N$ and for all $\eta>0, i,j\in \Z$ define the random variable $\tilde{S}_{i,j}=\tilde{S}_{i,j}(n,\eta)$ by
\begin{equation}\label{eq:S_j'}
\tilde{S}_{i,j} =\max \{S_{i,j}^m : 1 \leq m \leq c_0 b_n,\:\varepsilon_{i,j}^m \in B_\eta\}
\end{equation}
with $c_0>0$ from Lemma \ref{lem:1} and $\max \emptyset :=0$. Further, define 
the modified blocks $\tilde{\bX}_{n,\bi} = \tilde{\bX}_{n,\bi}(\eta)$ in $\lo$ by
\begin{equation*}
\tilde{\bX}_{n,\bi}=(e^{\tilde{S}_{i,j}}:(i,j)\in \tilde{J}_{n,\bi}) \; .
\end{equation*}
It turns out that by restricting to the $\tilde{\bX}_{n,\bi}$'s one does not lose any relevant information. To understand the role of the $\tilde{\bX}_{n,\bi}$'s, 
observe that for any nonnegative and measurable function $f$ on $[0,1]^2\times \loo$,
\begin{multline}
\big| \ex\left[e^{-\sum_{\bi\in I_n}f (\bi /k_n, \bX_{n,i}/a_n)}\right] -\textstyle{\prod_{\bi\in I_n}} \ex\left[e^{-f (\bi /k_n, \bX_{n,i}/a_n)}\right] \big| \\
\leq\big| \ex\left[e^{-\sum_{\bi\in I_n}f (\bi /k_n, \bX_{n,i}/a_n)}\right] -  \ex\big[e^{-\sum_{\bi\in I_n}f (\bi /k_n, \tilde{\bX}_{n,i}/a_n)}\big] \big| \\
+\big|\textstyle{\prod_{\bi\in I_n}} \ex\left[e^{-f (\bi /k_n, \bX_{n,i}/a_n)}\right] -  \textstyle{\prod_{\bi\in I_n}}\ex\big[e^{-f (\bi /k_n, \tilde{\bX}_{n,i}/a_n)}\big] \big| \\
+ \big| \ex[e^{-\sum_{\bi\in I_n}f (\bi /k_n, \tilde{\bX}_{n,i}/a_n)}] -\textstyle{\prod_{\bi\in I_n}} \ex[e^{-f (\bi /k_n, \tilde{\bX}_{n,i}/a_n)}] \big| =: I_1 + I_2 + I_3 \, . \label{eq:equiv_tilde}
\end{multline} 

Recall now the convergence determining family $\F'_0$ from \Cref{subs:PP_conv}.
The proof of the following result is in Section \ref{sub:AIequivalence}.
\begin{lemma}\label{lem:AIequivalence}
For every $\eta>0$ and every $f\in \F'_0$, $I_1+I_2 \to 0$ as $n\toi$.
\end{lemma}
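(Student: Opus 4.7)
The plan is to bound $I_1$ and $I_2$ by a common quantity and then estimate the latter via the probability that the $\delta$-truncations of $\bX_{n,\bi}/a_n$ and $\tilde\bX_{n,\bi}/a_n$ disagree. For $I_1$ the inequality $|e^{-x}-e^{-y}|\le |x-y|$ for $x,y\ge 0$ gives
$$
I_1\le\sum_{\bi\in I_n}\ex\bigl|f(\bi/k_n,\bX_{n,\bi}/a_n)-f(\bi/k_n,\tilde\bX_{n,\bi}/a_n)\bigr|,
$$
and for $I_2$ the telescoping inequality $\bigl|\prod_i a_i-\prod_i b_i\bigr|\le\sum_i|a_i-b_i|$ for $a_i,b_i\in[0,1]$, combined with the same Lipschitz estimate inside each expectation, yields the same upper bound. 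Since $f\in\F'_0$ is bounded by $\|f\|_\infty$ and satisfies $f(\bx)=f(\bx^\delta)$ for some $\delta>0$, each summand is dominated by $2\|f\|_\infty\,\pr(E_{n,\bi}^\delta)$, where $E_{n,\bi}^\delta$ is the event that the $\delta$-truncations of $\bX_{n,\bi}/a_n$ and $\tilde\bX_{n,\bi}/a_n$ disagree as elements of $\lo$.

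Noting that $\tilde S_{i,j}\le S_{i,j}$ (the maximum in (\ref{eq:S_j'}) is taken over a subset of $\{S_{i,j}^m:m\ge 0\}$, with $\max\emptyset:=0\le S_{i,j}$), one verifies the decomposition
$$
E_{n,\bi}^\delta\subseteq\bigcup_{(i,j)\in J_{n,\bi}\setminus\tilde J_{n,\bi}}\bigl\{X_{i,j}>a_n\delta\bigr\}\ \cup\ \bigcup_{(i,j)\in\tilde J_{n,\bi}}\bigl\{S_{i,j}>b_n+\log\delta,\ \tilde S_{i,j}<S_{i,j}\bigr\},
$$
writing $b_n=\log a_n$ and splitting the edge from the truncation contribution. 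Stationarity together with $|J_{n,\bi}\setminus\tilde J_{n,\bi}|\le 2r_nl_n$ and (\ref{eq:reg_var_tail}) bound the summed edge contribution by $k_n^2\cdot 2r_nl_n\cdot \tailcons\delta^{-\ts}n^{-2}(1+o(1))=O(l_n/r_n)\to 0$. By stationarity, the summed truncation contribution is at most $n^2\,\pr(S_{0,0}>b_n+\log\delta,\ \tilde S_{0,0}<S_{0,0})$, and the event in question is contained in $A\cup B$, where $A=\{\max_{m>c_0 b_n}S_{0,0}^m\ge b_n+\log\delta\}$ and $B=\{\exists\,1\le m\le c_0 b_n:\ S_{0,0}^m>b_n+\log\delta,\ \varepsilon_{0,0}^m\notin B_\eta\}$. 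Lemma \ref{lem:1} applied at $u=b_n$ gives $\pr(A)\le\pr(\max_{m>c_0 b_n}S_{0,0}^m\ge 0)=o(e^{-2\ts b_n})=o(n^{-4})$, which is negligible after multiplication by $n^2$.

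The main obstacle is the bound $n^2\pr(B)\to 0$, which I will obtain by combining the exponential change of measure
$$\pr(S_{0,0}^m>u,\ \varepsilon_{0,0}^m\notin B_\eta)\le e^{-\ts u}\pr^*(\varepsilon_{0,0}^m\notin B_\eta)$$
with two further ingredients: first, the crude bound $S_{0,0}^m\le m\cdot\max_{a,b}s(a,b)$ forces $m\ge c'b_n$ for some $c'=c'(\delta)>0$; second, on the finite alphabet $E\times E$ Hoeffding's inequality (or equivalently Sanov's theorem) provides a rate $J(\eta)>0$ such that $\pr^*(\varepsilon_{0,0}^m\notin B_\eta)\le c(\eta)e^{-mJ(\eta)}$ for all $m$. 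Summing the resulting geometric series from $m\ge c'b_n$ upward yields $\pr(B)=O(e^{-(\ts+c'J(\eta))b_n})=o(n^{-2})$, whence $n^2\pr(B)\to 0$; combining all estimates proves $I_1+I_2\to 0$.
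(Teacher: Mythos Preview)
Your argument is correct and follows essentially the same route as the paper's proof. The paper bounds $I_1+I_2$ by $2\sum_{\bi}\ex|e^{-f(\cdot,\bX_{n,\bi}/a_n)}-e^{-f(\cdot,\tilde\bX_{n,\bi}/a_n)}|$, uses the $\delta$-truncation property of $f\in\F'_0$ to reduce to the same three events (edge, large-$m$, and bad empirical measure), and handles the third via a separately stated Lemma~\ref{lem:3}; you simply re-derive that lemma inline via change of measure, the lower bound $m\ge c'b_n$, and Sanov/Hoeffding, and sum the geometric series instead of taking a supremum times $c_0 b_n$ terms --- a cosmetic rather than substantive difference.
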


\begin{remark}\label{rem:originalScores}
In particular, since $I_1\to 0$ for all $f\in \F'_0$, point processes $\sum_{\bi \in I_n} \delta_{(\bi /k_n, \tilde{\bX}_{n,\bi})}$, which are based on the $\tilde{S}_{i,j}$'s, converge in distribution if and only if point processes $\sum_{\bi \in I_n} \delta_{(\bi /k_n, \bX_{n,\bi})}$, which are based on the $S_{i,j}$'s from (\ref{eq:stationary_scores}), do, and in that case their limits coincide. Similarly, one can show that the former (and therefore the latter) convergence is equivalent to convergence of point processes of blocks based on nonstationary scores from (\ref{eq:original_scores}). In particular, the point process convergence results given below hold even with the $S_{i,j}$'s from (\ref{eq:stationary_scores}) replaced with the ones from (\ref{eq:original_scores}).
\end{remark}

By (\ref{eq:equiv_tilde}) and Lemma \ref{lem:AIequivalence}, to show that the $(\bi/k_n,\bX_{n,\bi}/a_n)$'s are $AI(\F'_0)$, it is sufficient to find at least one $\eta>0$ such that $I_3\to 0$ for all $f\in \F'_0$, i.e.\ that the $(\bi/k_n,\tilde{\bX}_{n,\bi}/a_n)$'s are $AI(\F'_0)$. For that purpose, we apply \Cref{prop:CheckingAI_regVarFields}.

For every $\bi=(i_1,i_2)\in I_n$ define its neighborhood $B_n(\bi)$ by
\[
B_n(\bi)=\{\bj=(j_1,j_2)\in I_n : i_1=j_1 \; \text{or} \; i_2 = j_2\} \, .
\]
Observe, $|B_n(\bi)|=2k_n -1$ for all $\bi \in I_n$ and hence  $\lim_{n\toi}\|B_n\|/k_n^2 = 0$. Further, by (\ref{eq:reg_var_tail}) for all $\epsilon>0$,
\begin{align*}
\limsup_{n\toi}k_n^2 \pr(\|\tilde{\bX}_{n,\bone}\|>a_n\epsilon)&\leq \limsup_{n\toi} k_n^2r_n^2 \pr(e^{\tilde{S}_{0,0}}>a_n \epsilon) \\
&\leq \limsup_{n\toi} k_n^2r_n^2 \pr(X_{0,0}>a_n \epsilon) =\epsilon^{-\ts} <\infty \, .
\end{align*}

Next, recall that $S_{i,j}^m=\sum_{k=0}^{m-1}s(A_{i-k},B_{j-k})$ so by \eqref{eq:S_j'}, for every $n\in \N$, 
$$\tilde{S}_{i,j} \in \sigma(A_{i-\lfloor c_0 b_n \rfloor +1},\dots, A_{i},B_{j-\lfloor c_0 b_n \rfloor +1},\dots, B_{j}) \, . $$

By the construction of the $\tilde{J}_{n,\bi}$'s and  the choice of $(l_n)$ such that, in particular,
$\lim_{n\toi} c_0 b_n /l_n = \lim_{n\toi} l_n /r_n = 0$, this implies that,
  for $n$ large enough, $\tilde{\bX}_{n,\bi}$ and  the blocks $(\tilde{\bX}_{n,\bj}:\bj \notin B_n(\bi))$ are constructed from completely different sets of the $A_k$'s and the $B_k$'s, and therefore independent. 

Further, when $\bj \in B_n(\bi)$, $\bj\neq \bi$, 
arbitrary scores $S_{i,j}^m$ and $S_{i',j'}^l$  which build blocks $\tilde{\bX}_{n,\bi}$ and $\tilde{\bX}_{n,\bj}$, respectively (i.e.\ $(i,j)\in \tilde{J}_{n,\bi}, (i',j')\in \tilde{J}_{n,\bj}$ and $1\leq m,l \leq c_0 b_n$), for $n$ large enough, depend on completely different sets of variables from at least one of the sequences $(A_k)$ or $(B_k)$. Thus, the following result, which is \cite[Corollary 5.4]{hansen:2006}, applies.

\begin{lemma}[{\cite[Corollary 5.4]{hansen:2006}}]\label{lem:bound_incl_totalvariation}
There exist constants $\epsilon_2, \eta>0$ such that for all $u>0$
\begin{equation*}
\pr(S_{0,0}^m >u,  S_{i,j}^l >u ,\; \varepsilon_{0,0}^m, \varepsilon_{i,j}^l \in B_{\eta}) \leq e^{-(3/2+\epsilon_2)\ts u} 
\end{equation*}
uniformly over all $i,j\in \Z$ and $m,l\in \N$ such that 
$\min\{i,j\}<-m+1$ or $\max\{i-l,j-l\}>0$.
\end{lemma}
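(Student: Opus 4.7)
The statement is quoted directly from Hansen~\cite[Corollary~5.4]{hansen:2006}, so my plan is to adapt Hansen's argument to the present i.i.d.\ setting along the lines of our adaptation of the related Lemma~\ref{lem:2} in \cite[Lemma~4.2.3]{thesis}. The strategy will combine iterated exponential changes of measure with Azuma--Hoeffding estimates and the entropic condition (E').

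By symmetry between the roles of $(A_k)$ and $(B_k)$, I would first reduce to the case where the $A$-index sets $\{-m+1,\ldots,0\}$ of $S_{0,0}^m$ and $\{i-l+1,\ldots,i\}$ of $S_{i,j}^l$ are disjoint, letting $r\in\{0,\ldots,\min(m,l)\}$ denote the size of the overlap in the $B$-coordinate. Then I tilt the law of diagonal~1 from $(\mu_A\times\mu_B)^{\otimes m}$ to $(\mus)^{\otimes m}$; the Radon--Nikodym derivative is $e^{-\ts S_{0,0}^m}$, bounded by $e^{-\ts u}$ on the event. Under the tilted law, the diagonal-2 pairs remain independent, with $A$-marginal $\mu_A$ throughout and $B$-marginal equal to $\mu_B$ off the overlap but $\mus_B$ on it. A second tilt would carry diagonal~2 to $(\mus)^{\otimes l}$, producing a factor $e^{-\ts S_{i,j}^l}\leq e^{-\ts u}$ together with an overlap correction $\prod_{k\in\mathrm{overlap}}\mus_B(B_k)/\mu_B(B_k)$, which on $\{\varepsilon_{i,j}^l\in B_\eta\}$ is controlled via Azuma--Hoeffding by $\exp\bigl(rH(\mus_B|\mu_B)+C\eta l\bigr)$ (the empirical $B$-marginal of diagonal~2 is forced close to $\mus_B$); the parallel estimate applies with $A$ and $B$ swapped when the $B$-indices are the disjoint ones. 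Combining the two $e^{-\ts u}$ factors with the overlap correction, note that the constraints $S^m,S^l>u$ and $\varepsilon^m,\varepsilon^l\in B_\eta$ force $m,l\geq(1-O(\eta))u/\beta$ where $\beta=\ex_{\mus}[s]>0$, while the complementary regime $\min(m,l)\gg u/\beta$ is dispatched by Sanov's inequality $\pr(\varepsilon^k\in B_\eta)\leq e^{-k(\ts\beta-\delta(\eta))}$ together with the identity $\ts\beta=H(\mus|\mu_A\times\mu_B)$. A case split on $\min(m,l)$ then yields the bound, with (E')---which reads $\ts\beta>2\bigl(H(\mus_A|\mu_A)\vee H(\mus_B|\mu_B)\bigr)$---providing the quantitative input that pushes the net exponent strictly past $3\ts u/2$.

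The main obstacle will be exactly this final balance: in the regime where the overlap $r$ is comparable to $u/\beta$, the overlap correction nearly cancels one of the $e^{-\ts u}$ factors, and only the strict inequality in (E') saves the day with a positive $\epsilon_2$. Tracking the $\eta$-dependent error terms uniformly in $i,j,m,l$---so that the resulting $\epsilon_2$ is independent of them---is the non-routine part of the argument, and it is precisely for this uniformity that one must invoke Azuma--Hoeffding concentration rather than only Sanov's theorem.
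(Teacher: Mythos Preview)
The paper does not supply its own proof of this lemma: it is stated as a citation of \cite[Corollary~5.4]{hansen:2006}, and the subsequent remark only records that in the i.i.d.\ setting the relevant hypothesis reduces to Assumption~\ref{hypo:E'} and that a written-out proof appears in \cite[Lemma~4.3.5]{thesis} (with an alternative combinatorial argument in \cite{dembo:karlin:zeitouni:1994}). Your sketch---iterated exponential tilting on the two diagonals, Azuma--Hoeffding control of the overlap correction via the empirical-measure constraint, and the final entropy balance using (E') to push the exponent strictly past $3\ts u/2$---is precisely the Hansen route that the paper defers to, so there is nothing to contrast.
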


\begin{remark}
\cite[Corollary 5.4]{hansen:2006} follows from \cite[Lemma 5.3]{hansen:2006} under condition (12) in \cite{hansen:2006}, which, when $(A_i)$ and $(B_i)$ are i.i.d.\ sequences,  is equivalent to Assumption \ref{hypo:E'}, see \cite[Remark 3.8]{hansen:2006}; the proof can be found in \cite[Lemma 4.3.5]{thesis}. For a different and, in this i.i.d.\ setting, probably better argument, see \cite[pp.\ 2032--2033]{dembo:karlin:zeitouni:1994}. Note that the fact that $E$ is finite is here exploited.
\end{remark}

Take now the constant $\eta>0$ from the previous result and recall the corresponding $\tilde{\bX}_{n,\bi}$'s. For $n$ big enough and every $\epsilon>0$ we get that
\begin{multline*}
k_n^2 \|B_n\| \max_{\substack{\bi \in I_n \\ \bi\neq \bj\in B_n(\bi)}}\pr(\|\tilde{\bX}_{n,\bi}\|>a_n\epsilon , \,\|\tilde{\bX}_{n,\bj}\|>a_n\epsilon) \\
 \leq k_n^2 2k_n r_n^4 (c_0 b_n)^2 e^{-(3/2+\epsilon_2)\ts (b_n + \log\epsilon)} \sim (const.)\,  n^3 r_n b_n^2 n^{-3 - 2 \epsilon_2}\to 0 \, ,
\end{multline*}
as $n \toi$, by the choice of $(r_n)$ and since $b_n\sim 2\log n / \ts$.

Hence by \Cref{prop:CheckingAI_regVarFields}, for this $\eta$, the blocks $\tilde{\bX}_{n,\bi}$, and therefore the original blocks $\bX_{n,\bi}$, satisfy the asymptotic independence condition. We can now apply \Cref{thm:PPconv}: 
the convergence
\begin{align}\label{eq:PPconv_alingm_pomocna}
\sum_{\bi\in I_n} \delta_{(\bi/k_n,\bX_{n,\bi}/(Cn^2)^{1/\ts})} \dto \sum_{k\in \N}\delta_{(\bT_k, P_k(Q^k_{i,j})_{i,j \in \Z} )} 
\end{align}  
holds in $\mathcal{M}_p([0,1]^2 \times \loo)$ where the limit is described in Theorem \ref{thm:PPconv} and \Cref{rem:explanation}, with $\vartheta$ given by (\ref{eq:extremalIndex_align}) and $(Q_{i,j}^k)_{i,j\in \Z}$, $k\in \N$ with the distribution given in (\ref{eq:Q's_align}).

\Cref{thm:PPconv_Alignments_intro} stated in the introduction now follows from (\ref{eq:PPconv_alingm_pomocna}) by an application of the continuous mapping theorem since the mapping
\begin{align*}
\sum_{k\in \N} \delta_{(\bt_k,\bx_k)}\mapsto \sum_{k\in \N} \delta_{(\bt_k,\bx_k \cdot C^{1/\ts})} \, ,
\end{align*} 
is continuous (see \cite[Proposition 2.8]{basrak:planinic:2019}), and then applying standard Poisson process transformation arguments (see e.g.\ \cite[Proposition 3.7]{resnick:1987}).

Consider now the space $\mathcal{M}_p([0,1]^2 \times \R)$ with a set $B\subseteq [0,1]^2 \times \R$ being bounded if $B\subseteq [0,1]^2 \times (x,\infty)$ for some $x\in \R$.
\begin{corollary}\label{cor:sequence_alignment}
Under Assumptions \ref{hypo:negative_drift} and \ref{hypo:E'}, 
\[
\sum_{i,j=1}^n \delta_{\left(\tfrac{(i,j)}{n},\: S_{i,j}- \tfrac{2\log(n)}{\ts}\right)} 
\dto 
\sum_{k\in \N} \sum_{m\in \Z} \delta_{(\bT_k, \tilde{P}_k + \tilde{Q}^{k}_m  )} \, 
\] 
in $\mathcal{M}_p([0,1]^2 \times \R)$
 where
  \begin{enumerate}[(i)]
  \item $\sum_{k\in \N}\delta_{(\bT_k,\tilde{P}_k)}$ is a Poisson point process on $[0,1]^2\times \R$
   with intensity measure $\vartheta \tailcons \mbox{Leb} \times \ts e^{-\ts u} du$; 
  \item $(\tilde{Q}^{k}_{m})_{m\in \Z}, \: k\in \N$ are i.i.d.\ two-sided $\R$-valued sequences, independent of
    $\sum_{k\in \N}\delta_{(\bT_k,\tilde{P}_k)}$ and with common distribution equal to the distribution of the random walk $(S^\bz_m)_m$ conditioned on staying negative for $m<0$ and nonpositive for $m>0$.
  \end{enumerate}
\end{corollary}
\begin{proof}
An application of \Cref{cor:PPconv_back_to_R} to the convergence in (\ref{eq:PPconv_alingm_pomocna}) yields that 
\begin{align}\label{eq:PPconvInR_alignments}
\sum_{i,j=1}^n\delta_{((i,j) / n , X_{i,j}/(Cn^2)^{1/\ts})} \dto \sum_{k\in \N} \sum_{i,j\in \Z}\delta_{(\bT_k, P_k Q^k_{i,j} )} = \sum_{k\in \N} \sum_{m \in \Z}\delta_{(\bT_k, P_k Q^k_{m,m} )}
\end{align}
in $\mathcal{M}_p([0,1]^2 \times (0,\infty))$, where the last equality follows since $Q^k_{i,j}=0$ for $i\neq j$. It is easy to see that  
\begin{align*}
\sum_{k\in \N} \delta_{(\bt_k,x_k)}\mapsto \sum_{k\in \N} \delta_{(\bt_k, \,  \log (x_k C^{1/\ts}))} \, 
\end{align*} 
is a well defined mapping from $\mathcal{M}_p([0,1]^2 \times (0,\infty))$ to $\mathcal{M}_p([0,1]^2 \times \R)$ which is also continuous w.r.t.\  the vague topologies on these spaces. 
The result now follows easily from (\ref{eq:PPconvInR_alignments}) via the continuous mapping theorem and using standard Poisson process transformation arguments (again, see e.g.\ \cite[Proposition 3.7]{resnick:1987}).
\end{proof}

\section{Postponed proofs} \label{sec:App}
\subsection{Proof of Theorem \ref{thm:tail_process}}\label{subs:RVequiv}
We only prove \ref{it:tail_process2}$\Rightarrow$\ref{it:tail_process1} since \ref{it:tail_process1}$\Rightarrow$\ref{it:tail_process3} follows as in \cite[Theorem 2.1]{basrak:segers:2009} and \ref{it:tail_process3}$\Rightarrow$\ref{it:tail_process2} is obvious. Also, since we essentially adapt the arguments of \cite[Theorem 2.1]{basrak:segers:2009}, some details are omitted.

Observe first that (\ref{eq:conv_to_tail_forward}) with $I=\{\bo\}$ implies that   for all $\epsilon>0$,
\begin{align}\label{eq:inter_RV}
\lim_{u\toi}\frac{\pr(|X_{\bo}|>u\epsilon)}{\pr(|X_{\bo}|>u)}={\epsilon}^{-\alpha}\, ,
\end{align} 
and moreover that $X_{\bo}$ is a regularly varying random variable with index $\alpha$, see \cite[Theorem 2.1]{basrak:segers:2009}.

Take now an arbitrary finite $I\subseteq \Z^d$ such that $|I|\geq 2$ and consider the space $\R^{|I|} \setminus \{\bo\}$ with bounded sets being those which are contained in sets $B_{\epsilon}:=\{(x_{\bi})_{\bi \in I} \in \R^{|I|}: \sup_{\bi\in I}|x_{\bi}|>\epsilon\}$, $\epsilon>0$. In view of (\ref{eq:inter_RV}), multivariate regular variation (with index $\alpha$) of $\bX_{I}$ is equivalent to the existence of a nonzero measure $\mu_I\in \mathcal{M}(\R^{|I|} \setminus \{\bo\})$ such that 
\begin{align*}
\mu_u^I(\, \cdot \,):= \frac{\pr( u^{-1} \bX_{I} \in  \cdot)}{\pr(|X_{\bo}|>u)}\vto \mu_I \, , \text{ as } \, u\toi \, , 
\end{align*} 
see~\cite[Definition 3.1, Proposition 3.1]{segers:zhao:meinguet:2017} (cf.~\cite[Equation (1.3)]{basrak:segers:2009}).


Arguing exactly as in \cite[Theorem 2.1]{basrak:segers:2009} it follows that the vague limit of $\mu_{u}^{I}$, if it exists, is necessarily nonzero, and furthermore,  
that $\limsup_{u\toi} \mu_u^I(B_{\epsilon}) \leq |I| \epsilon^{-\alpha}<\infty$ for every $\epsilon>0$. Since sets $\{(x_{\bi})_{\bi \in I} \in \R^{|I|}: \sup_{\bi\in I}|x_{\bi}| \in [\epsilon,M]\}$ are compact for every $\epsilon,M >0$, by \cite[Theorem 4.2]{kallenberg:2017}  it follows that the set  $\{\mu_{u}^{I}: u>0\}$ is relatively compact in the vague topology of $\mathcal{M}(\R^{|I|} \setminus \{\bo\})$.

Since $\mathcal{I}$ is encompassing, we can take $\bi^*\in I$ such that $I':=I-\bi^* \subseteq \mathcal{I}$.
By \cite[Lemma 2.2]{basrak:segers:2009}, to show that measures $\mu_{u}^{I}$ vaguely converge as $u\toi$,
it suffices to prove that $\lim_{u\toi} \mu_{u}^{I}(f)$ exists for  all $f\in \mathcal{F}$ where $\mathcal{F}=\mathcal{F}_1\cup \mathcal{F}_2\subseteq CB_b^+(\R^{|I|} \setminus \{\bo\})$ with
\begin{align*}
&\mathcal{F}_1=\{f : \, \text{for some} \; \epsilon>0, f((x_{\bi})_{\bi \in I})=0 \;  \text{if} \; |x_{\bi^*}| \leq \epsilon\} \, , \\
&\mathcal{F}_2=\{f :f((x_{\bi})_{\bi \in I}) \; \text{does not depend on} \; x_{\bi^*}\} \, .
\end{align*}
Note that families $\mathcal{F}_1$ and $\mathcal{F}_2$ depend on $I$ but we omit this in the notation. 

Since $I'\subseteq \mathcal{I}$, stationarity, (\ref{eq:conv_to_tail_forward}) and (\ref{eq:inter_RV}) imply that for every $f\in \F_1$ and $\epsilon>0$ as in the definition of $\F_1$,
\begin{align*}
\mu_u^{I}(f)=\frac{\pr(|X_{\bo}|>u\epsilon)}{\pr(|X_{\bo}|>u)}\cdot \ex[f(u^{-1}\bX_{I'})\mid |X_{\bo}|>u\epsilon] \to \epsilon^{-\alpha} \ex[f(\epsilon(Y_{\bi})_{\bi \in I'})]\, , \; \text{as} \; u\toi \, .
\end{align*}
Further, every $f\in \mathcal{F}_2$ naturally induces a function $\tilde{f}$ in  $CB_b^+(\R^{|I|-1} \setminus \{\bo\})$ and by stationarity
\begin{align*}
\mu_{u}^{I}(f)=\frac{\ex[\tilde{f}(u^{-1}\bX_{I\setminus\{\bi^*\}})]}{\pr(|X_{\bo}|>u)} =\mu_{u}^{I\setminus\{\bi^*\}}(\tilde{f}) \, .
\end{align*}
Hence, $\lim_{u\toi}\mu_{u}^{I}(f)$ exists for all $f\in \mathcal{F}_2$ if $\bX_{I\setminus\{\bi^*\}}$ is multivariate regularly varying. 

Observe, we have shown that for an arbitrary finite $I\subseteq \Z^d$ such that $|I|\geq 2$, $\bX_{I}$ is multivariate regularly varying if  $\bX_{I\setminus\{\bi^*\}}$ is, where $\bi^*\in I$ is 
such that $I-\bi^* \subseteq \mathcal{I}$.
Therefore, \ref{it:tail_process1} now follows by regular variation of $X_{\bo}$ and since $\mathcal{I}$ is encompassing.

\subsection{Local sequence alignments}\label{sub:alignments_appendix}

\subsubsection{Proof of \Cref{lem:1}}
By Markov inequality, for any $\lambda\geq 0$ and all $u>0$
\begin{align*}
\pr\left(\max_{m > c_0u}S_{0,0}^m \geq 0\right)\leq \sum_{l=0}^\infty \pr\left(S_{0,0}^{\lceil c_0u \rceil  +l} \geq 0\right)
\leq \sum_{l=0}^\infty \ex \left[e^{\lambda S_{0,0}^{\lceil c_0 u \rceil +l}}\right]
= \sum_{l=0}^\infty m(\lambda)^{\lceil c_0 u \rceil +l}\, ,
\end{align*}
where $m(\lambda)=\ex[e^{\lambda s(A,B)}]$ is the moment generating function of $s(A,B)$.
Fix any $0<\lambda_0<\ts$.
By strict convexity of $m$ and $m(\ts)=1$, 
$0<m(\lambda_0)<1$
and in particular
\[
\pr\left(\max_{m\geq c_0u}S_{0,0}^m \geq 0\right)\leq e^{c_0 u\log m(\lambda_0) } \sum_{l=0}^\infty m(\lambda_0)^l.  
\]
Since the series above is summable, taking 
$c_0$ strictly larger than $-2\ts/\log m(\lambda_0)$
finishes the proof.

\subsubsection{Proof of \Cref{prop:assumptions}}\label{sub:ACforAlignments}
Let $(r_n)$ be an arbitrary sequence of positive integers satisfying $r_n\toi$ and $r_n/n^\epsilon \to 0$ for all $\epsilon>0$. We have to show that for an arbitrary $u>0$   
\begin{align}\label{eq:ACforAlignments_inter0}
\lim_{m\toi}\limsup_{n\toi}\pr\left(\max_{m< |(i,j)|\leq r_n} X_{i,j}>a_nu\; \Big| \;  X_{0,0}>a_nu\right)=0 \; .
\end{align}
We deal with the diagonal elements using arguments from \cite[Lemma 4.1.4]{basrak:2000}. First, notice that by (\ref{eq:Lind}), for each $k\geq 1$ we can decompose
\begin{align*}
X_{k,k} = \max\{ e^{\max_{0\leq l \leq k} S_{k,k}^l}, X_{0,0} e^{S_{k,k}^k}\} \, ,
\end{align*}
with $(S_{k,k}^l)_{0\leq l\leq k}$ being independent of $X_{0,0}$. Hence, using stationarity, 
\begin{align*}
&\pr\left(\max_{m< |k|\leq r_n} X_{k,k}>a_n u \; \Big| \; X_{0,0}>a_n u\right) \leq 2 \sum_{k=m+1}^{r_n} \pr(X_{k,k}>a_n u\; | \; X_{0,0}>a_n u) \\
&\leq 2 r_n \pr(e^{\max_{0\leq l \leq r_n} S_{0,0}^l} >a_n u) + 2 \sum_{k=m+1}^{r_n} \pr(X_{0,0} e^{S_{k,k}^k}>a_n u \; | \; X_{0,0}>a_n u).
\end{align*}
Since $r_n/n^2 \to 0$, the choice of $(a_n)$ and (\ref{eq:reg_var_tail}) imply that
\begin{align*}
2 r_n \pr(e^{\max_{0\leq l \leq r_n} S_{0,0}^l} >a_n u) \leq 2 r_n \pr(X_{0,0} >a_n u) \to 0 \, , \text{ as } n\toi \, .
\end{align*}
For the second term, take an arbitrary $0<\lambda_0 < \ts$ so in particular 
$0<m(\lambda_0)=\ex[e^{\lambda_0 s(A,B)}]<1$ by strict convexity of $m$. Apply Markov's  inequality and use independence between $X_{0,0}$ and $S_{k,k}^k$ to obtain
\begin{align*}
\sum_{k=m+1}^{r_n} \pr(X_{0,0} e^{S_{k,k}^k}>a_n u \; | \; X_{0,0}>a_n u) \leq \frac{\ex[X_{0,0}^{\lambda_0}\ind{X_{0,0}>a_n u}]}{(a_n u)^{\lambda_0} \pr(X_{0,0}>a_n u)} \sum_{k=m+1}^{r_n} m(\lambda_0)^k \, .
\end{align*}
A variant of Karamata's theorem (see \cite[Appendix B.4]{buraczewski:damek:mikosch:2016}, also \cite[pp.\ 26--28]{bingham:goldie:teugels:1987}) now implies that 
\begin{align*}
\lim_{m\toi}\limsup_{n\toi}\pr\left(\max_{m< |k|\leq r_n} X_{k,k}>a_n u \; \Big| \; X_{0,0}>a_n u\right) \leq \frac{\ts}{\ts - \lambda_0} \lim_{m\toi}  \sum_{k=m+1}^{\infty} m(\lambda_0)^k =0 \, .
\end{align*}

It remains to deal with the non diagonal terms. More precisely, in order to obtain (\ref{eq:ACforAlignments_inter0}), we will show that, denoting $b_n=\log a_n$ and $M=\log u$,
\begin{multline*}
\limsup_{n\toi}\pr\left(\max_{|(i,j)|\leq r_n, \, i\neq j} S_{i,j}>b_n + M \; \Big| \; S_{0,0}>b_n + M \right)\\
=C^{-1}e^{\ts M}\limsup_{n\toi} e^{\ts b_n} \pr\left(\max_{|(i,j)|\leq r_n, \, i\neq j} S_{i,j}>b_n + M, \, S_{0,0}>b_n + M \right) =0 \, .
\end{multline*}
Notice that $e^{\ts b_n}=Cn^2$. First, since $r_n/n \to 0$, stationarity and Lemma \ref{lem:1} give 
\begin{align*}
\limsup_{n\toi} e^{\ts b_n}\pr\left(\max_{ \substack{|(i,j)|\leq r_n \\ k>c_0 b_n} } S_{i,j}^k\geq  0\right)&\leq \limsup_{n\toi} e^{\ts b_n} (2r_n +1)^2\pr\left(\max_{ k>c_0 b_n } S_{0,0}^k\geq 0\right)\\
&\leq \limsup_{n\toi} \frac{(2r_n +1)^2}{\tailcons n^2}=0 \, .
\end{align*}
Now by Lemma \ref{lem:2} there exist an $\epsilon_0>0$ such that
\begin{align*}
&\limsup_{n\toi} e^{\ts b_n} \pr\left(\max_{|(i,j)|\leq r_n, \, i\neq j} S_{i,j}>b_n + M, \, S_{0,0}>b_n + M \right)\\
&= \limsup_{n\toi} e^{\ts b_n}\pr\left(\max_{\substack{|(i,j)|\leq r_n,\, i\neq j\\ 1\leq l\leq c_0 b_n}} S_{i,j}^l>b_n + M, \max_{1\leq k\leq c_0 b_n}S_{0,0}^k>b_n + M \right) \\
&\leq \limsup_{n\toi} e^{\ts b_n} (2r_n +1)^2 (c_0 b_n)^2 2 e^{-(1+\epsilon')\ts b_n}\\
&=2 c_0^2 C^{-(1+\epsilon)} \limsup_{n\toi} \left(\frac{2r_n+1}{n^{\epsilon_0 /2}}\right)^2 \left(\frac{b_n}{n^{\epsilon_0/2}}\right)^2=0 \, ,
\end{align*}
where the last equality follows by the choice of $(r_n)$ and since $b_n\sim \tfrac{2}{\ts}\log n$.

\subsubsection{Proof of \Cref{lem:AIequivalence}}\label{sub:AIequivalence}
First, we need the following simple result proved  by a change of measure argument and a large deviation bound for empirical measures, cf.\ the proof of \cite[Lemma 5.14, Equation (54)]{hansen:2006}.
\begin{lemma}\label{lem:3}
For all $\eta>0$ there exists an $\epsilon_1>0$ such that
\[
\lim_{u \toi} e^{(1+\epsilon_{1})\ts u} \sup_{m\geq 1} \pr(S_{0,0}^m > u,\varepsilon_{0,0}^m \notin B_\eta ) = 0 \, .
\]
\end{lemma}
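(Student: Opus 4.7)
\textbf{Proof plan for Lemma~\ref{lem:3}.}

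The proof will combine a single change of measure with two distinct bounds, used in two regimes of $m$. The change of measure is the standard one: since under $\pr^*$ the pairs $(A_{-k},B_{-k})_{k=0,\dots,m-1}$ are i.i.d.\ with law $\mu^*$ and $\mu^*(a,b)/(\mu_A(a)\mu_B(b))=e^{\ts s(a,b)}$, we have
\[
\pr(S_{0,0}^m > u, \varepsilon_{0,0}^m \notin B_\eta)
=\ex^*\!\left[e^{-\ts S_{0,0}^m}\ind{S_{0,0}^m > u,\, \varepsilon_{0,0}^m \notin B_\eta}\right]
\leq e^{-\ts u}\,\pr^*(\varepsilon_{0,0}^m \notin B_\eta).
\]
Under $\pr^*$ the pairs are i.i.d., take values in the finite set $E\times E$, and $\mu^*$ is the unique minimizer of $H(\cdot\,|\,\mu^*)$. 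Hence Sanov's theorem (or, equivalently, a union bound over the finitely many letters combined with a Hoeffding bound for each coordinate of $\varepsilon_{0,0}^m$) gives a constant $I_\eta>0$ and $C_\eta<\infty$ such that
\[
\pr^*(\varepsilon_{0,0}^m \notin B_\eta)\leq C_\eta\, e^{-I_\eta m}\qquad\text{for all }m\geq 1.
\]
This is my workhorse for large $m$. Indeed, if $m\geq cu$ for a constant $c>0$ then
\[
\pr(S_{0,0}^m > u, \varepsilon_{0,0}^m \notin B_\eta)\leq C_\eta\, e^{-\ts u - I_\eta c u}=C_\eta\, e^{-(1+I_\eta c/\ts)\ts u},
\]
which easily beats $e^{-(1+\epsilon_1)\ts u}$ as long as $\epsilon_1<I_\eta c/\ts$.

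For small $m$ (i.e.\ $m<cu$) the previous bound is useless and I will instead give up the anticlustering condition and invoke a straight Chernoff bound on the walk under $\pr$ itself. Since $m(\ts)=1$ and $m'(\ts)=\ex[s(A,B)e^{\ts s(A,B)}]=\ex_{\mu^*}[s(A,B)]>0$, for any $\lambda>\ts$ slightly above $\ts$ we have $m(\lambda)>1$ but $\log m(\lambda)$ is small. Choosing $\lambda=(1+2\epsilon_1)\ts$, Markov's inequality gives
\[
\pr(S_{0,0}^m > u)\leq e^{-\lambda u}\,m(\lambda)^m\leq \exp\!\Big(-u\big(\lambda - c\log m(\lambda)\big)\Big)\qquad\text{for }m\leq cu.
\]
This beats $e^{-(1+\epsilon_1)\ts u}$ provided $c\log m((1+2\epsilon_1)\ts)<\epsilon_1\ts$.

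The main (mild) obstacle is the calibration of the two constants $c$ and $\epsilon_1$. Fixing $\epsilon_1>0$, I need simultaneously
\[
\frac{\epsilon_1\ts}{I_\eta}\;<\;c\;<\;\frac{\epsilon_1\ts}{\log m((1+2\epsilon_1)\ts)}.
\]
The ratio of the upper to lower bound equals $I_\eta/\log m((1+2\epsilon_1)\ts)$, which tends to $+\infty$ as $\epsilon_1\to 0$ because $\log m((1+2\epsilon_1)\ts)\to 0$ while $I_\eta>0$ is fixed. Thus for all sufficiently small $\epsilon_1>0$ there is an admissible $c$, and combining the two regimes yields the stated bound uniformly in $m\geq 1$.
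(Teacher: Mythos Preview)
Your argument is correct, and the change of measure plus Sanov step for large $m$ is exactly what the paper does. The difference lies in the treatment of small $m$. You handle $m<cu$ by a separate Chernoff bound at a parameter $\lambda>\ts$ and then calibrate $c$ against $\epsilon_1$. The paper avoids this entirely by observing that, since the alphabet is finite, the score function is bounded: $S_{0,0}^m\leq m\|s\|$, so $\pr(S_{0,0}^m>u)=0$ whenever $m\leq u/\|s\|$. Thus the small-$m$ regime is vacuous, and the single Sanov bound with the fixed threshold $c=1/\|s\|$ already covers all nontrivial $m$, giving $\epsilon_1=I_\eta/(\|s\|\ts)$ directly with no calibration. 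Your route has the minor advantage that it would still work for unbounded scores with $m(\lambda)<\infty$ in a neighbourhood of $\ts$, but in the present finite-alphabet setting the paper's observation is the cleaner shortcut.
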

\begin{proof}
Fix $\eta>0$ and denote $A_m(u)=\{S_{0,0}^m > u,\varepsilon_{\bo}^m \notin B_\eta\}$ for $m\geq 1$ and $u>0$. Note that, since $S_{\bo}^m=\sum_{k=0}^{m-1} s(A_{-k},B_{-k})$, $\pr(A_m(u))=0$ whenever $m\leq u/\|s\|$, so for fixed $u>0$ we only need to deal with $\pr(A_m(u))$ for $m> u/\|s\|$. 

First, a change of measure yields
\begin{align*}
\pr(A_m(u))=\ex\left[\frac{\exp(\ts S_{0,0}^m)}{\exp(\ts S_{0,0}^m)} \mathbbm{1}_{A_m(u)}\right] \leq e^{-\ts u} \pr^*(\varepsilon_{0,0}^m \notin B_\eta) \, ,
\end{align*}
where $\pr ^*$ makes $(A_{-k},B_{-k}),\: k=0,\dots,m-1$, i.i.d.\ elements of $E^2$ with common distribution $\mu^*$. By Sanov's theorem (see \cite[Theorem 2.1.10]{dembo:zeitouni:2010})
\[
\limsup_{m\toi} \frac{1}{m} \log \pr^*(\varepsilon_{0,0}^m \notin B_\eta) \leq - \inf_{\pi \notin B_\eta} H(\pi \mid \mu^*) \, .
\]
 Since, for a sequence of probability measures $(\pi_n)$ on $E^2$, $ H(\pi_n \mid \mu^*)\to 0$ implies that $\|\pi_n - \mu^*\|\to 0$, for we can find a constant $c=c(\eta)>0$ such that $\inf_{\pi \notin B_\eta} H(\pi \mid \mu^*)>c$. Hence, for all $m>u/\|s\|$ with $u$ large enough
\[
\pr^*(\varepsilon_{0,0}^m \notin B_\eta) \leq e^{-m c} \leq e^{-u c/\|s\|} \, .
\]
To finish the proof, it suffices to take $\epsilon_1:=\tfrac{c}{\|s\| \ts} >0$.
\end{proof}

\begin{proof}[Proof of \Cref{lem:AIequivalence}]
Take an arbitrary $f\in \F'_0\subseteq CB_{b}^+([0,1]^2 \times \loo)$ and let $\epsilon>0$ be such that  $f(\bt,(x_{i,j})_{i,j})=f(\bt,(x_{i,j}\1{\{|x_{i,j}|>\epsilon\}})_{i,j})$ for all $\bt \in [0,1]^2$ and $(x_{i,j})_{i,j\in \Z}\in\loo$ with $f(\bt,\bo)=0$.

By the elementary inequality $|\prod_{i=1}^k a_i - \prod_{i=1}^k b_i|\leq \sum_{i=1}^k |a_i-b_i|$ valid for all $k \geq 1$ and $a_i,b_i \in [0,1]$ (see e.g.\ \cite[Lemma 3.4.3]{durrett:2010}),
\begin{multline}
\big| \ex\left[e^{-\sum_{\bi\in I_n}f (\bi /k_n, \bX_{n,i}/a_n)}\right] -  \ex\big[e^{-\sum_{\bi\in I_n}f (\bi /k_n, \tilde{\bX}_{n,i}/a_n)}\big] \big| \\
+\big|\prod_{\bi\in I_n} \ex\left[e^{-f (\bi /k_n, \bX_{n,i}/a_n)}\right] -  \prod_{\bi\in I_n}\ex\big[e^{-f (\bi /k_n, \tilde{\bX}_{n,i}/a_n)}\big] \big| \\
 \leq 2 \sum_{\bi\in I_n} \ex\big|e^{-f (\bi /k_n, \bX_{n,i}/a_n)} - e^{-f (\bi /k_n, \tilde{\bX}_{n,i}/a_n)} \big| \, . \label{eq:AIequiv_inter0}
\end{multline} 
Further, denote by $J_{r_n}:=\{1,\dots,r_n\}^2=J_{n,\bone}$ and $\tilde{J}_{r_n}:=\{1,\dots,r_n-l_n\}^2 =\tilde{J}_{n,\bone}$. 
Using stationarity we get that
\begin{align}\label{eq:AIequiv_inter1}
\sum_{\bi\in I_n} \ex\big|e^{-f (\bi /k_n, \bX_{n,i}/a_n)} - e^{-f (\bi /k_n, \tilde{\bX}_{n,i}/a_n)} \big|
\leq k_n^2 (A_1 + A_2 + A_3) \, ,
\end{align}
where
\begin{align*}
&A_1=\pr(X_{i,j}>a_n \epsilon \; \text{ for some } (i,j)\in J_{r_n}\setminus \tilde{J}_{r_n}) \, , \\
&A_2= \pr(\max_{m>c_0 b_n}e^{S_{i,j}^m }> a_n \epsilon \; \text{  for some } (i,j) \in \tilde{J}_{r_n}) \, , \\
&A_3= \pr(e^{S_{i,j}^m }>a_n \epsilon \text{ and } \varepsilon_{i,j}^m \notin B_\eta \; \text{ for some } (i,j) \in \tilde{J}_{r_n},\, 1\leq m \leq c_0 b_n) \, .
\end{align*}
Observe,  $|J_{r_n}\setminus \tilde{J}_{r_n}|\leq 2 r_n l_n$ and $|\tilde{J}_{r_n}|\leq r_n^2$, and recall that $k_n r_n \sim n$ as $n\toi$, so using stationarity and then (\ref{eq:reg_var_tail}), Lemma \ref{lem:1} and Lemma \ref{lem:3}, respectively, 
\begin{align*}
\limsup_{n\toi} k_n^2 A_1 &\leq \limsup_{n\toi} 2 k_n^2 r_n l_n \pr(X_{0,0}>a_n\epsilon) = (const.) \limsup_{n\toi} l_n /r_n = 0 \, , \\
\limsup_{n\toi}k_n^2 A_2 &\leq\limsup_{n\toi} k_n^2 r_n^2 \pr(\max_{m>c_0 b_n}{S_{0,0}^m }\geq 0)\leq \limsup_{n\toi}n^{-2} = 0 \, , \\
\limsup_{n\toi} k_n^2 A_3 &\leq \limsup_{n\toi} k_n^2 r_n^2 c_0 b_n \sup_{m\geq 1} \pr({S_{0,0}^m }>b_n +  \log \epsilon, \, \varepsilon_{i,j}^m \notin B_\eta) \\
&\leq (const.) \limsup_{n\toi} b_n/n^{2\epsilon_1} = 0 \, .
\end{align*}
Therefore, the right hand side, and then also the left hand side, of (\ref{eq:AIequiv_inter1}) tends to $0$ as $n\toi$, and by (\ref{eq:AIequiv_inter0}) this proves the lemma. 
\end{proof}

\section*{Acknowledgements}
The research of both authors was supported in part by the HRZZ project "Stochastic methods in analytical and applied problems" (3526) and currently by the SNSF/HRZZ Grant "Probabilistic and analytical aspects of generalised regular variation" (180549). We also thank  the anonymous reviewers for their helpful comments and suggestions which lead to a significant improvement of the paper.





\end{document}